\let\ams@starttoc\@starttoc
\let\@starttoc\ams@starttoc
\patchcmd{\@starttoc}{\makeatletter}{\makeatletter\parskip\z@}{}{}
\newcommand\CA{{\mathscr A}} 
\newcommand\CB{{\mathscr B}}
\newcommand\CD{{\mathscr D}}
\newcommand\CI{{\mathcal I}}  
\newcommand\CAF{{\mathcal {AF}}} 
\newcommand\CIF{{\mathcal {IF}}} 
\newcommand\CDF{{\mathcal {DF}}} 
\newcommand\CIFM{{\mathcal {IFM}}} 
\newcommand\CRFM{{\mathcal {RFM}}} 
\newcommand\CAFM{{\mathcal {AFM}}}
\newcommand\BBK{{\mathbb K}}
\newcommand\BBQ{{\mathbb Q}}
\newcommand\BBZ{{\mathbb Z}}
\newcommand\Der{{\operatorname{Der}}}
\newcommand\pdeg{\operatorname{pdeg}}
\newcommand{\one}{\mathbbm{1}}
\numberwithin{equation}{section}
\theoremstyle{plain}
\newtheorem{lemma}[equation]{Lemma}
\newtheorem{theorem}[equation]{Theorem}
\newtheorem{corollary}[equation]{Corollary}
\newtheorem{proposition}[equation]{Proposition}
\theoremstyle{definition}
\newtheorem{defn}[equation]{Definition}
\newtheorem{remark}[equation]{Remark}
\newtheorem{remarks}[equation]{Remarks}
\newtheorem{example}[equation]{Example}
\subjclass[2010]{52C35 (14N20, 32S22, 51D20)}  
\begin{document}

\title[Inductive Freeness of Ziegler's Canonial Multiderivations]
{Inductive Freeness of 
Ziegler's Canonical Multiderivations}

\author[T.~Hoge and G.~R\"ohrle]{Torsten Hoge and Gerhard R\"ohrle}
\address
{Fakult\"at f\"ur Mathematik,
	Ruhr-Universit\"at Bochum,
	D-44780 Bochum, Germany}
\email{torsten.hoge@rub.de}
\email{gerhard.roehrle@rub.de}

\keywords{
Free arrangement, free
multiarrangement,
Ziegler multiplicity,
inductively free arrangement, inductively free multiarrangement}

\allowdisplaybreaks

\begin{abstract}
Let $\CA$ be a free hyperplane arrangement.
In 1989, Ziegler showed that 
the restriction $\CA''$ of $\CA$
to any hyperplane 
endowed with the natural multiplicity $\kappa$ 
is then a free multiarrangement $(\CA'',\kappa)$.
The aim of this paper is to prove
an analogue of Ziegler's theorem for the stronger notion of inductive freeness:  
if $\CA$ is inductively free, then so is 
the multiarrangement $(\CA'',\kappa)$.

In a related result
we derive that if a deletion $\CA'$ of $\CA$ is free and the corresponding restriction $\CA''$ is inductively free, then so is $(\CA'',\kappa)$ -- irrespective of the freeness of $\CA$. In addition, we show counterparts of the latter kind for additive and recursive freeness.
\end{abstract}

\maketitle

\setcounter{tocdepth}{1}
\tableofcontents


\section{Introduction}

The class of free arrangements, respectively free multiarrangments, 
plays a pivotal role in the theory of 
hyperplane arrangements, respectively 
multiarrangements. 
In \cite{ziegler:multiarrangements}, Ziegler 
introduced the notion of multiarrangements and initiated the study of their 
freeness.  
We begin by recalling Ziegler's 
fundamental construction from \emph{loc.~cit}. 

\begin{defn}
\label{def:kappa}
Let $\CA$ be an arrangement.
Fix $H_0 \in \CA$ and  consider the restriction 
$\CA''$ with respect to $H_0$.
Define the \emph{canonical multiplicity} 
$\kappa$ on $\CA''$ as follows. For $Y \in \CA''$ set 
\[
\kappa(Y) := |\CA_Y| -1,
\]
i.e., $\kappa(Y)$ is the number of hyperplanes in $\CA \setminus\{H_0\}$
lying above $Y$.
Ziegler showed that freeness of $\CA$ implies 
freeness of the multiarrangement $(\CA'', \kappa)$.
We also call $\kappa$ the \emph{Ziegler multiplicity} and 
$(\CA'', \kappa)$ the \emph{Ziegler restriction} of $\CA$ on $H_0$.
\end{defn}

\begin{theorem}
[{\cite[Thm.~11]{ziegler:multiarrangements}}]
\label{thm:zieglermulti}
Let $\CA$ be a free arrangement with exponents
$\exp \CA = \{1, e_2, \ldots, e_\ell\}$.
Let $H_0 \in \CA$ and consider the restriction 
$\CA''$ with respect to $H_0$.
Then the multiarrangement $(\CA'', \kappa)$ is free with
exponents
$\exp (\CA'', \kappa) = \{e_2, \ldots, e_\ell\}$. 
\end{theorem}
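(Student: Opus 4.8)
The plan is to produce $D(\CA'',\kappa)$ as the reduction modulo $\alpha_{H_0}$ of a suitably chosen basis of $D(\CA)$, and to recognize that this reduction is a basis by means of Saito's criterion for multiarrangements. We may assume $\CA$ is essential. Pick coordinates $x_1,\dots,x_\ell$ on the ambient space with $\alpha_{H_0}=x_\ell$, put $S=\BBK[x_1,\dots,x_\ell]$ and $\overline S=S/(x_\ell)=\BBK[x_1,\dots,x_{\ell-1}]$, the coordinate ring of $H_0$, so that $D(\CA'',\kappa)\subseteq\Der(\overline S)$. Since $\CA$ is free, it is standard that $D(\CA)$ has a homogeneous basis one member of which is the Euler derivation $\theta_E=\sum_i x_i\partial_i$; write the basis as $\theta_E,\theta_2,\dots,\theta_\ell$ with $\deg\theta_j=e_j$. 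Because $H_0\in\CA$ we have $\theta_j(x_\ell)=x_\ell g_j$ for homogeneous $g_j$, and replacing $\theta_j$ by $\tilde\theta_j:=\theta_j-g_j\theta_E$ for $j\ge 2$ yields a homogeneous basis $\theta_E,\tilde\theta_2,\dots,\tilde\theta_\ell$ of $D(\CA)$ of the same degrees (the base change is unitriangular) with the extra feature that each $\tilde\theta_j$ annihilates $x_\ell$, so has vanishing $\partial_\ell$-coefficient: $\tilde\theta_j=\sum_{i=1}^{\ell-1}h_{ij}\partial_i$.

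Define the reduction $\eta\mapsto\overline\eta$ of any derivation $\eta=\sum_{i<\ell}h_i\partial_i$ of $S$ with $\eta(x_\ell)=0$ by $\overline\eta:=\sum_{i<\ell}\overline{h_i}\,\partial_i\in\Der(\overline S)$. The first key point is that $\overline{\tilde\theta_j}\in D(\CA'',\kappa)$ for every $j\ge 2$. Fix $Y\in\CA''$; after a linear change of $x_1,\dots,x_{\ell-1}$ alone we may assume $\overline{\alpha_Y}=x_{\ell-1}$, and then the $\kappa(Y)$ hyperplanes of $\CA\setminus\{H_0\}$ lying above $Y$ are cut out by linear forms $x_{\ell-1}-cx_\ell$ for $\kappa(Y)$ pairwise distinct scalars $c$. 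For $\eta=\sum_{i<\ell}h_i\partial_i\in D(\CA)$ with $\eta(x_\ell)=0$, the relation $\eta(x_{\ell-1}-cx_\ell)\in(x_{\ell-1}-cx_\ell)S$ reads $(x_{\ell-1}-cx_\ell)\mid h_{\ell-1}$; these forms being pairwise coprime, their product divides $h_{\ell-1}$, whence reduction modulo $x_\ell$ gives $x_{\ell-1}^{\kappa(Y)}\mid\overline{h_{\ell-1}}=\overline\eta(\overline{\alpha_Y})$. As $Y$ was arbitrary, $\overline\eta\in D(\CA'',\kappa)$; apply this with $\eta=\tilde\theta_j$.

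It remains to compute the coefficient determinant. By Saito's criterion for the basis $\theta_E,\tilde\theta_2,\dots,\tilde\theta_\ell$ of $D(\CA)$, the $\ell\times\ell$ coefficient matrix $M$ satisfies $\det M\doteq\prod_{H\in\CA}\alpha_H$, where $\doteq$ denotes equality up to a nonzero scalar of $\BBK$. Since the $\partial_\ell$-row of $M$ is $(x_\ell,0,\dots,0)$, Laplace expansion along it gives $\det M\doteq x_\ell\det M'$ with $M'=(h_{ij})_{1\le i\le\ell-1,\ 2\le j\le\ell}$, hence $\det M'\doteq\prod_{H\neq H_0}\alpha_H$. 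Reducing modulo $x_\ell$ and grouping the hyperplanes $H\neq H_0$ according to the flat $H\cap H_0\in\CA''$ they determine — with each $\overline{\alpha_H}$ a nonzero multiple of $\overline{\alpha_{H\cap H_0}}$ — we obtain $\det(\overline{h_{ij}})=\overline{\det M'}\doteq\prod_{H\neq H_0}\overline{\alpha_H}\doteq\prod_{Y\in\CA''}\overline{\alpha_Y}^{\,\kappa(Y)}$. Thus $\overline{\tilde\theta_2},\dots,\overline{\tilde\theta_\ell}\in D(\CA'',\kappa)$ are homogeneous of degrees $e_2,\dots,e_\ell$ with coefficient determinant a nonzero scalar times $\prod_{Y}\overline{\alpha_Y}^{\,\kappa(Y)}$, so Saito's criterion for multiarrangements (due to Ziegler) shows they form a basis of $D(\CA'',\kappa)$. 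Hence $(\CA'',\kappa)$ is free with $\exp(\CA'',\kappa)=\{e_2,\dots,e_\ell\}$.

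The genuine inputs are the structural ones — freeness of $\CA$ so that a basis through $\theta_E$ exists, and Saito's criterion in its ordinary and its multiarrangement form — together with the combinatorial identity $e_2+\dots+e_\ell=|\CA|-1=\sum_{Y\in\CA''}\kappa(Y)$ that is built into the last displayed computation. The step I expect to be the conceptual heart, rather than a technical obstacle, is the second paragraph: it is the only place where the specific definition of the canonical multiplicity $\kappa$ enters, and once reduction is known to land in $D(\CA'',\kappa)$ the determinant identity is essentially forced. Mild care is also needed to see that the $\overline{\tilde\theta_j}$ remain homogeneous of the stated degrees, i.e.\ do not vanish upon reduction, which follows from the nonvanishing of $\det(\overline{h_{ij}})$.
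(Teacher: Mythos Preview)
The paper does not provide its own proof of this theorem; it is quoted from Ziegler's 1989 paper and used as a black box throughout. Your argument is correct and is essentially Ziegler's original proof: pick a homogeneous basis of $D(\CA)$ through $\theta_E$, normalize the remaining members to lie in the annihilator $D_{H_0}(\CA)=\{\theta\in D(\CA)\mid\theta(\alpha_{H_0})=0\}$, check that their reductions modulo $\alpha_{H_0}$ land in $D(\CA'',\kappa)$, and verify the Saito determinant. All steps are sound; in particular the reduction to the essential case and the local coordinate change for a fixed $Y\in\CA''$ are harmless, and the nonvanishing of $\overline{\det M'}$ (hence of the individual $\overline{\tilde\theta_j}$) follows exactly as you indicate from the nonvanishing of each $\overline{\alpha_H}$ for $H\ne H_0$.

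For comparison, the paper does contain a closely related argument inside the proof of Lemma~\ref{lemma:free_sequence_to_simple_multiplicity}, which establishes freeness of $(\CA'',\kappa)$ under the alternative hypothesis that $\CA'$ and $\CA''$ are free. There the direction is reversed: one uses surjectivity of $\rho\colon D(\CA)\to D(\CA'')$ (granted by freeness of $\CA'$, via \cite[Thm.~1.13]{abe:projectivedimension}) together with the splitting $D(\CA)=S\theta_E\oplus D_{H_0}(\CA)$ to lift a basis of $D(\CA'')$ into $D_{H_0}(\CA)$, and then applies the inclusion $\rho(D_{H_0}(\CA))\subseteq D(\CA'',\kappa)$ and Saito's criterion downstairs. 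Your proof computes the determinant upstairs and reduces; the paper's variant computes it downstairs after lifting. Both hinge on exactly the same two structural facts---the $\theta_E$-splitting and the containment $\rho(D_{H_0}(\CA))\subseteq D(\CA'',\kappa)$, the latter being precisely your second paragraph.
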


Because of the relevance of Ziegler's multiplicity
in the theory of free arrangements, 
it is natural to investigate stronger
freeness properties for $(\CA'', \kappa)$ and
specifically to ask for an analogue of
Theorem \ref{thm:zieglermulti} for 
inductive freeness. This is the content of 
our main result. 

\begin{theorem}
	\label{thm:main}
	If $\CA$ is inductively free, then so is any Ziegler restriction $(\CA'',\kappa)$ of $\CA$.
\end{theorem}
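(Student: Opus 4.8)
The plan is to argue by induction on the number $|\CA|$ of hyperplanes of $\CA$. The small cases, in which the Ziegler restriction $(\CA'',\kappa)$ has rank at most two, are immediate, so assume $|\CA|$ is large. Fix $H_0\in\CA$ with $\CA''=\CA^{H_0}$ and use that $\CA$ is inductively free: there is a hyperplane $H\in\CA$ such that the deletion $\CA_1:=\CA\setminus\{H\}$ and the restriction $\CA^{H}$ are both inductively free and $(\CA,\CA_1,\CA^{H})$ is an addition--deletion triple. I would split the argument according to whether $H=H_0$ or $H\neq H_0$.

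Assume first $H\neq H_0$, and put $Y_0:=H_0\cap H\in\CA''$; note $\kappa(Y_0)\geq 1$ since $H_0,H$ both lie above $Y_0$. The crux is to recognise, for the multiarrangement $(\CA'',\kappa)$ and its hyperplane $Y_0$, the deletion and the restriction in the sense of the Abe--Terao--Wakefield addition--deletion theorem for multiarrangements as Ziegler restrictions of strictly smaller free arrangements. For the deletion: passing from $\CA$ to $\CA_1$ removes exactly one hyperplane lying above $Y_0$ and changes nothing above any other $Y\in\CA''$, so the deletion of $(\CA'',\kappa)$ at $Y_0$ is precisely the Ziegler restriction $(\CA_1^{H_0},\kappa_{\CA_1})$ of $\CA_1$ on $H_0$ (with $Y_0$ dropping out exactly when $\kappa(Y_0)=1$). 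For the restriction: the underlying arrangement $(\CA'')^{Y_0}$ is the codimension-two restriction $\CA^{H_0\cap H}$, which equals the restriction of $\CA^{H}$ to its hyperplane $H_0\cap H$, and -- this is the technical heart -- the Euler multiplicity occurring here coincides with the Ziegler multiplicity $\kappa_{\CA^{H}}$ of $\CA^{H}$ on that hyperplane. Granting these two identifications, the inductive hypothesis applied to $\CA_1$ and to $\CA^{H}$ (both of which have fewer hyperplanes than $\CA$) shows that the deletion and the restriction of $(\CA'',\kappa)$ at $Y_0$ are inductively free; and applying Ziegler's Theorem~\ref{thm:zieglermulti} to each member of the triple $(\CA,\CA_1,\CA^{H})$ shows that the exponents of these three Ziegler multiarrangements again form an addition triple (a Ziegler restriction simply deletes the exponent $1$). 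Taking $Y_0$ as the distinguished hyperplane of a chain, the addition part of the multiarrangement addition--deletion theorem now yields that $(\CA'',\kappa)$ is inductively free.

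It remains to treat $H=H_0$. In this case $\CA\setminus\{H_0\}$ is inductively free, hence free, and $\CA^{H_0}$ is inductively free, which is precisely the hypothesis of the companion statement announced in the introduction: if a deletion $\CA'$ of $\CA$ is free and the matching restriction $\CA''$ is inductively free, then $(\CA'',\kappa)$ is inductively free. I would establish this companion statement separately, again by induction -- this time on $|\CA''|$ -- using a chain witnessing the inductive freeness of $\CA''$, Ziegler's original Theorem~\ref{thm:zieglermulti} to keep track of freeness of the intermediate ordinary arrangements, and once more the multiarrangement addition--deletion theorem to propagate inductive freeness to the multiarrangement side.

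I expect the main obstacle to be the identification, in the case $H\neq H_0$, of the Euler multiplicity of the iterated restriction with the Ziegler multiplicity $\kappa_{\CA^{H}}$. Since the Euler multiplicity at a hyperplane is read off from the exponents of the rank-two multiarrangement that $\kappa$ cuts out on the corresponding localization, this requires a careful local analysis at each codimension-two flat $H_0\cap H\cap K$, $K\in\CA$, matching that rank-two picture against the number of hyperplanes of $\CA^{H}$ through the relevant flat. A secondary, bookkeeping-type point is to verify that at every step the numerical hypotheses of the multiarrangement addition--deletion theorem are met; here Ziegler's Theorem~\ref{thm:zieglermulti} and the exponent computation above should suffice.
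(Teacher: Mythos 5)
Your case $H \neq H_0$ is essentially the paper's argument: the identification of the deletion of $(\CA'',\kappa)$ at $Y_0=H\cap H_0$ with the Ziegler restriction of $\CA\setminus\{H\}$ on $H_0$, and of the Euler multiplicity of the restriction with the Ziegler multiplicity of $\CA^{H}$ on $H\cap H_0$, is exactly Lemma \ref{lemma:free_sequence_to_ziegler_multiplicity}(2); you correctly single this out as the technical heart, sketch the right local rank-two analysis for it, and have the freeness of the ambient arrangements available, which is what makes that analysis work (cf.\ Example \ref{ex:zigler-vsEuler}). The exponent bookkeeping via Ziegler's Theorem \ref{thm:zieglermulti} is also fine. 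That half of the proof is sound.

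The gap is in the case $H=H_0$. You defer to the companion statement (Theorem \ref{thm:main2}) and propose to prove it by induction on $|\CA''|$ ``using a chain witnessing the inductive freeness of $\CA''$''. This does not work as stated: the multiplicity $\kappa$ is defined by the ambient arrangement $\CA$, and its restriction to a proper subarrangement $\CB_i\subsetneq\CA''$ in an inductive chain of $\CA''$ is not the Ziegler multiplicity of any restriction, so Ziegler's theorem gives no freeness of $(\CB_i,\kappa|_{\CB_i})$; moreover each added hyperplane carries multiplicity $\kappa(Y_i)$, possibly greater than $1$, so one step of that chain is not a single addition--deletion step, and such a step does not decrease the induction parameter $|\CA''|$. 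The correct object is a filtration of \emph{multiplicities} $\one=\mu_1<\cdots<\mu_n=\kappa$ on the fixed arrangement $\CA''$, and what must be proved is (a) that every $(\CA'',\mu)$ with $\one\le\mu\le\kappa$ is free, and (b) that the Euler multiplicity at each step of such a filtration is again a Ziegler multiplicity of a restriction of $\CA''$, so that the outer induction on $|\CA|$, applied to the inductively free $\CA''$, handles those restrictions. This is the paper's Lemma \ref{lemma:free_sequence_to_simple_multiplicity}, whose proof rests on the surjectivity of the restriction map $\rho\colon D(\CA)\to D(\CA'')$, which in turn requires the freeness of the deletion $\CA'$. None of this appears in your sketch, and it is the actual content of the $H=H_0$ case. (A smaller point: establishing the companion statement genuinely ``separately'' is delicate, since one needs inductive freeness of the Ziegler restrictions of $\CA''$ itself; in your setup this must come from the outer induction on $|\CA|$ applied to $\CA''$, which you would need to make explicit to avoid circularity.)
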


In \cite{hogeroehrle:Ziegler}, we
initialized the study 
of the implication of Theorem \ref{thm:main}
and examined the multiarrangements $(\CA'', \kappa)$ where 
the underlying
class of arrangements $\CA$
consists of reflection arrangements of complex reflection groups.

\begin{remark}
While the multiset of exponents of a free simple arrangement $\CA$ is a combinatorial invariant of $\CA$, i.e., $\exp \CA $ only depends on the intersection lattice of $\CA$, 
thanks to Terao's seminal factorization theorem, cf.~\cite[Thm.~4.137]{orlikterao:arrangements},
this is no longer the case when we pass to multiarrangements, as Ziegler already observed in \cite[Prop.~10]{ziegler:multiarrangements}. Thus, unsurprisingly,
while inductive freeness is a combinatorial 
property for simple arrangements, e.g.~see \cite[Lem.~2.5]{cuntzhoge}, in contrast,
for multiarrangements it is not combinatorial, see \cite[Ex.~2.4]{dipasqualewakefield}.
Moreover, 
despite the fact that the Ziegler multiplicity $\kappa$ only depends on the intersection lattice of $\CA$, 
the freeness of $(\CA'',\kappa)$ is not combinatorial, 
as demonstrated by the following example, which is based on the so called whirl matroid, cf.~\cite{dipasqualewakefield}.
Indeed, the example demonstrates
that the freeness of a Ziegler restriction is not combinatorial in a strong sense, for it neither depends on the intersection lattice of the underlying ambient arrangement nor on that of the restriction.
So the assertion of Theorem \ref{thm:main} is rather unexpected. 
\end{remark}

\begin{example}
	\label{ex:noncombk}
	We define two rational rank $4$ arrangements $\CA$ and $\CB$  which have the $12$ hyperplanes given by the following linear forms 
	\begin{equation*}
		x+5t, x+6t, x-5t, y+2t, y-4t, y-t, z+4t, z+3t, z-3t,
		x+z+7t, y+z-6t, t
	\end{equation*}
	in $\BBQ[x,y,z,t]$ in common, and $\CA$ has additionally the hyperplane given by $x+y-2t$ and $\CB$ the one given by $x-2y -2t$. So $\CA$ and $\CB$ only differ by one hyperplane.
	The intersection lattices of $\CA$ and $\CB$ are identical.
	
	The Ziegler restrictions of $\CA$ and $\CB$  to $H_0 = \ker(t)$ are given by
	\begin{equation*}
		\label{eq:a}
		Q(\CA'', \kappa) = x^3y^3z^3(x+y)(x+z)(y+z)
	\end{equation*}
	and 
	\begin{equation*}
		\label{eq:b}
		Q(\CB'', \kappa) = x^3y^3z^3(x-2y)(x+z)(y+z),
	\end{equation*}
	respectively. Consequently, thanks to 
	\cite[Thm.~4.2]{dipasqualewakefield}, 
	we get that $(\CB'',\kappa)$ is free, but $(\CA'',\kappa)$ is not.
	Moreover, the intersection lattices of $\CA''$ and $\CB''$ are identical as well.
	Finally, note that none of the simple arrangements $\CA$, $\CB$, $\CA''$, and $\CB''$ is free, see \cite[Prop.~5.2]{dipasqualewakefield}. 
\end{example}

Note that even if $\CA$ is inductively free, there might be 
restrictions $\CA^{H_0}$ of $\CA$ which are themselves not even free, e.g.~see \cite[Rem.~3.6]{amendmoellerroehrle:aspherical}. Nevertheless, as asserted by Theorem \ref{thm:main}, the Ziegler restriction $(\CA^{H_0},\kappa)$ is again inductively free for \emph{every} $H_0 \in \CA$.

There is also the weaker notion of \emph{recursive freeness}, both for simple and also for  multiarrangements, see \cite[Def.~4.60]{orlikterao:arrangements} and Definition \ref{def:recfree} (\cite[Def.~2.21]{hogeroehrleschauenburg:free}), respectively.
We show the following analogue of Theorem \ref{thm:main} in this context.

\begin{theorem}
	\label{thm:main5}
	If $\CA$ is recursively free, then so is any Ziegler restriction $(\CA'',\kappa)$ of $\CA$.
\end{theorem}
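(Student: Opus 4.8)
The plan is to adapt the inductive argument behind Theorem~\ref{thm:main}, the new feature being that, unlike inductive freeness, the class of recursively free (multi)arrangements is stable under \emph{deletion} as well as under addition inside triples. The structural input I would use is the one driving the proof of Theorem~\ref{thm:main}: if $(\CA\setminus\{H_1\},\CA,\CA^{H_1})$ is a triple with respect to a hyperplane $H_1\in\CA$ and $H_0\in\CA\setminus\{H_1\}$, then, writing $Y_1:=H_0\cap H_1\in\CA^{H_0}$, the three multiarrangements
\[
\bigl((\CA\setminus\{H_1\})^{H_0},\kappa'\bigr),\qquad
\bigl(\CA^{H_0},\kappa\bigr),\qquad
\bigl((\CA^{H_0})^{Y_1},\kappa^{*}\bigr)
\]
--- the Ziegler restriction of $\CA\setminus\{H_1\}$ on $H_0$, the Ziegler restriction of $\CA$ on $H_0$, and the Euler restriction of $(\CA^{H_0},\kappa)$ along $Y_1$ --- form an addition--deletion triple of multiarrangements whose exponent data are determined, through Theorem~\ref{thm:zieglermulti}, by those of the original triple; moreover the Euler restriction $\bigl((\CA^{H_0})^{Y_1},\kappa^{*}\bigr)$ is (isomorphic to) the Ziegler restriction of $\CA^{H_1}$ on $Y_1$.

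Granting this, I would argue by a double induction, on $\rank\CA$ and on the length of a recursive certificate for $\CA$: the rank parameter handles restrictions $\CA^{H_1}$, the certificate parameter the deletions. If $\CA$ is empty or $\rank\CA\le 1$, the Ziegler restriction is empty, so assume otherwise and fix $H_0\in\CA$. Since recursively free arrangements are free, Theorem~\ref{thm:zieglermulti} already gives that $(\CA'',\kappa)$ is free with the expected exponents, so only recursive freeness remains. Let $H_1$ be the hyperplane used in the final step of a recursive certificate for $\CA$. If that step is an addition with $H_1=H_0$, then $\CA\setminus\{H_0\}$ is recursively free (in particular free) and $\CA^{H_0}$ is recursively free, and the recursive-freeness counterpart of the ``related result'' announced in the introduction --- a deletion $\CA'$ being free and the corresponding restriction $\CA''$ being recursively free forces $(\CA'',\kappa)$ to be recursively free --- yields that $(\CA^{H_0},\kappa)$ is recursively free.

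In the remaining cases $H_1\neq H_0$. If the last step is an addition, apply the structural input to $(\CA\setminus\{H_1\},\CA,\CA^{H_1})$: here $(\CA^{H_0},\kappa)$ is the middle member of the multiarrangement triple, its deletion $\bigl((\CA\setminus\{H_1\})^{H_0},\kappa'\bigr)$ is recursively free by the induction hypothesis for $\CA\setminus\{H_1\}$ (same rank, shorter certificate), and its Euler restriction along $Y_1$, being the Ziegler restriction of $\CA^{H_1}$ on $Y_1$, is recursively free by the induction hypothesis for $\CA^{H_1}$ (smaller rank). If the last step is a deletion, so $\CA=\widehat{\CA}\setminus\{H_1\}$ with $\widehat{\CA}:=\CA\cup\{H_1\}$ recursively free, apply the structural input instead to $(\widehat{\CA}\setminus\{H_1\},\widehat{\CA},\widehat{\CA}^{H_1})$: now $(\CA^{H_0},\kappa)$ is the \emph{deletion} member of that triple, and its two companions --- the Ziegler restriction of $\widehat{\CA}$ on $H_0$ and the Ziegler restriction of $\widehat{\CA}^{H_1}$ on $Y_1$ --- are recursively free by the induction hypothesis (a deletion step strictly shortens the certificates of $\widehat{\CA}$ and $\widehat{\CA}^{H_1}$, and the latter has smaller rank). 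In either case $(\CA^{H_0},\kappa)$ is already known to be free with the correct exponents, so the appropriate clause --- addition in the first case, deletion in the second --- of the definition of recursive freeness for multiarrangements (Definition~\ref{def:recfree}) certifies $(\CA^{H_0},\kappa)$ as recursively free.

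The genuinely new point compared with Theorem~\ref{thm:main}, and the thing to get right, is exactly this deletion case: a recursive certificate for $\CA$ may reach $\CA$ by deleting a hyperplane from a recursively free arrangement $\widehat{\CA}$ having more hyperplanes than $\CA$, which is why the induction must be organized on the certificate rather than on $|\CA|$ and why one must there invoke the deletion, not the addition, half of the multiarrangement machinery. The only remaining labour is the routine bookkeeping turning Terao's exponent containments for the original triple into the matching exponent conditions for the associated triple of multiarrangements, which is immediate from the exponent formula of Theorem~\ref{thm:zieglermulti}.
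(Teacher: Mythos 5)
Your proposal is correct and follows essentially the same route as the paper: a double induction on rank and on the length of the recursive certificate, with the key structural input being exactly the paper's Lemma~\ref{lemma:free_sequence_to_ziegler_multiplicity} (the triple of Ziegler restrictions is an addition--deletion triple of multiarrangements whose Euler multiplicity is again a Ziegler multiplicity on a restriction of $\CA^{H_1}$, valid because both members of the simple triple are free), and with the same three-way case split on the last step of the certificate. The one point to repair is the case $H_1=H_0$: you invoke the recursive-freeness part of Theorem~\ref{thm:main2}, but the paper derives that theorem \emph{from} Theorem~\ref{thm:main5}, so citing it as a black box is circular on its face; you must instead unfold it as the paper does --- use Lemma~\ref{lemma:free_sequence_to_simple_multiplicity} to produce the free filtration $\one\le\mu_i\le\kappa$ on $\CA''$ whose Euler multiplicities are Ziegler multiplicities of restrictions of $\CA''$, apply the rank induction hypothesis to those, and climb the filtration via the addition clause of Definition~\ref{def:recfree} --- which your rank induction does accommodate.
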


Even if $\CA$ itself is not inductively free, we derive the following result still guaranteeing 
inductive freeness  of $(\CA^{H_0},\kappa)$, provided  the restriction $\CA^{H_0}$ is still inductively free; likewise for additive and recursive freeness. For the notion of \emph{additive freeness} for simple and multiarrangements, see Definitions \ref{def:addfree} and \ref{def:multaddfree}, respectively.

\begin{theorem}
	\label{thm:main2}
	Let $H_0 \in \CA$. Suppose that $\CA\setminus \{H_0\}$ is free.
	If $\CA^{H_0}$ is inductively free (resp.~additively free, resp.~recursively free), then so is $(\CA^{H_0},\kappa)$.
\end{theorem}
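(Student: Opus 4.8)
The plan is to run an induction on the cardinality $|\CA|$ and to exploit the hypothesis ``$\CA' := \CA\setminus\{H_0\}$ is free'' together with Ziegler's addition-type machinery for the canonical multiplicity. The starting point is the key observation, due to Ziegler and refined in the Abe--Terao--Wakefield circle of ideas, that passing from $\CA$ to a deletion/restriction triple $(\CA,\CA',\CA'')$ with respect to a hyperplane $H_1\ne H_0$ induces a compatible triple at the level of the Ziegler restriction on $H_0$: namely $(\CA'',\kappa)$ is obtained from $((\CA')'',\kappa')$ by adding multiplicity one along the single hyperplane $H_1\cap H_0$ (or, if $H_1\cap H_0$ already appears in $(\CA')''$, by increasing its multiplicity by one), and the restriction of $(\CA'',\kappa)$ to that hyperplane is exactly the Ziegler restriction of $\CA^{H_1}$ on $H_0\cap H_1$. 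This is the multiarrangement analogue of the ordinary addition–deletion setup, and it is what lets induction propagate.

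First I would set up the combinatorial bookkeeping: fix a hyperplane $H_1\in\CA$ with $H_1\ne H_0$, and form $\CA\setminus\{H_1\}$, $\CA^{H_1}$, and their Ziegler restrictions on $H_0$, resp.\ on $H_0\cap H_1$. Using the freeness of $\CA\setminus\{H_0\}$, I would invoke Theorem \ref{thm:zieglermulti} to get that the Ziegler restriction $(\CA'',\kappa)$ --- more precisely the Ziegler restriction of $\CA\setminus\{H_0\}$ on an appropriate hyperplane --- is free, with the exponents prescribed by that theorem; this supplies the ``free'' hypothesis needed to feed an addition–deletion theorem for multiarrangements (Abe's multiarrangement addition–deletion theorem). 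Then I would argue, by induction on $|\CA^{H_0}|$ interpreted suitably as the size of the ambient simple arrangement, that the inductive (resp.\ additive, resp.\ recursive) freeness of $\CA^{H_0}$ transfers through the chain: each step adds a hyperplane to $\CA^{H_0}$ and correspondingly increases a multiplicity by one in $(\CA^{H_0},\kappa)$, while the restricted object is the Ziegler restriction of $(\CA^{H_0})^{H_1\cap H_0}$, to which the inductive hypothesis applies because that restriction is again inductively (resp.\ additively, recursively) free by the corresponding property of $\CA^{H_0}$. The exponent compatibility needed to certify each addition step comes from Theorem \ref{thm:zieglermulti} applied to the free arrangements $\CA\setminus\{H_0\}$ and $(\CA\setminus\{H_0\})^{H_1}$.

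The three flavours (inductive, additive, recursive) are handled in parallel: each is defined by the same kind of recursion (closure under an addition step whose deletion and restriction already lie in the class), so once the geometric triple and the exponent count are in place, the induction is formally identical; only the base case and the precise closure axiom differ, and those are immediate. I would organize the write-up so that the geometric/combinatorial lemma identifying the triple $((\CA')'',\kappa')\subset(\CA'',\kappa)\supset$ restriction is proved once, and then the three conclusions are read off.

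The main obstacle I expect is precisely the geometric lemma: verifying that the Ziegler restriction construction is genuinely functorial with respect to deletion of a hyperplane $H_1\ne H_0$ --- that is, that $(\CA'',\kappa)$ and $((\CA\setminus\{H_1\})'',\kappa)$ differ by exactly one unit of multiplicity on the single hyperplane $\overline{H_1} = H_1\cap H_0$ of $\CA''$, and that the Ziegler restriction of $(\CA'',\kappa)$ along $\overline{H_1}$ coincides with the Ziegler restriction of $\CA^{H_1}$ on $H_0\cap H_1$ --- and then checking that the numerical hypotheses of Abe's addition–deletion theorem for multiarrangements are met, using Theorem \ref{thm:zieglermulti} to pin down all the exponents. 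Once this compatibility is established, the transfer of inductive/additive/recursive freeness is a routine induction; the freeness of $\CA\setminus\{H_0\}$ enters exactly to guarantee, via Theorem \ref{thm:zieglermulti}, that the relevant Ziegler restrictions along the way are free so that the addition theorem is applicable.
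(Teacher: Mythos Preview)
Your proposed approach has a genuine gap, and the paper's actual proof works quite differently.

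The central issue is your ``geometric lemma'': you want the Euler restriction of $(\CA'',\kappa)$ along $\overline{H_1}=H_1\cap H_0$ to coincide with the Ziegler restriction of $\CA^{H_1}$ on $H_0\cap H_1$. This is precisely the content of the paper's Lemma~\ref{lemma:free_sequence_to_ziegler_multiplicity}, but that lemma requires \emph{both} $\CA$ and $\CA\setminus\{H_1\}$ to be free. In the setting of Theorem~\ref{thm:main2} only $\CA\setminus\{H_0\}$ is assumed free; $\CA$ itself need not be (indeed the text points out that $Q(\CA)=xyz(x+y+z)$ is already a non-free example). The paper's Example~\ref{ex:zigler-vsEuler} shows explicitly that without the freeness of the $\CA_i$ the two multiplicities can differ, so your identification fails in general. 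A related problem is that your induction does not propagate: to apply the hypothesis to $\CA\setminus\{H_1\}$ with the same $H_0$ you would need $\CA\setminus\{H_0,H_1\}$ to be free, which is not given. Finally, your appeal to Theorem~\ref{thm:zieglermulti} for ``the Ziegler restriction of $\CA\setminus\{H_0\}$'' is ill-posed, since $H_0\notin\CA\setminus\{H_0\}$.

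The paper instead works entirely inside $\CA''=\CA^{H_0}$. The key input is Lemma~\ref{lemma:free_sequence_to_simple_multiplicity}: using that $\CA'$ is free (so that $\rho:D(\CA)\to D(\CA'')$ is surjective by Abe's projective-dimension result) together with freeness of $\CA''$, one produces an explicit basis $\{\theta_\mu,\theta_2,\dots,\theta_{\ell-1}\}$ of $D(\CA'',\mu)$ for \emph{every} $\one\le\mu\le\kappa$. This immediately gives additive freeness of $(\CA'',\kappa)$ once $\CA''$ is additively free. Moreover, the same lemma identifies the Euler multiplicities along any such filtration as Ziegler multiplicities of restrictions of $\CA''$ (not of $\CA^{H_1}$). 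Since $\CA''$ is inductively (resp.\ recursively) free, Theorem~\ref{thm:main} (resp.\ Theorem~\ref{thm:main5}) applied to $\CA''$ makes those restrictions inductively (resp.\ recursively) free, and Lemma~\ref{lem:indfreechain} finishes. So the proof is a reduction to Theorems~\ref{thm:main} and~\ref{thm:main5} via an explicit basis construction, not an induction on $|\CA|$ removing hyperplanes other than $H_0$.
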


Note that one cannot drop the freeness condition on $\CA\setminus \{H_0\}$ in 
Theorem \ref{thm:main2}; see \cite[Ex.~2.24]{hogeroehrle:Ziegler} for an example where 
$\CA^{H_0}$ is inductively free while both $\CA\setminus \{H_0\}$ and 
$(\CA^{H_0},\kappa)$ are not even free.
While in \emph{loc.~cit.} 
both $\CA$ and $\CA'$ are non-free,
Example \ref{ex:failure1.5} illustrates the failure of Theorem \ref{thm:main2} when 
$\CA$ is still free and $\CA'$ is not.
It turns out that Theorem \ref{thm:main2} is a consequence of  Theorem \ref{thm:main} (resp.~Theorem \ref{thm:main5}), see \S\ref{Sec:proofs}.

The $3$-arrangement $\CA$ given by $Q(\CA) = xyz(x+y+z)$ is the smallest example for Theorem \ref{thm:main2} 
when $\CA$ is non-free, \cite[Ex.~4.34]{orlikterao:arrangements}.

The following is immediate from Theorems \ref{thm:main2} and \ref{thm:add-del-simple}.

\begin{corollary}
	\label{cor:main2}
	Let $H_0 \in \CA$. Suppose both $\CA$ and $\CA^{H_0}$ are free and $\exp \CA^{H_0} \subset \exp \CA$. If $\CA^{H_0}$ is inductively free (resp.~additively free, resp.~recursively free), 
	then so is $(\CA^{H_0},\kappa)$.
\end{corollary}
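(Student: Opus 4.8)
The plan is to derive Corollary~\ref{cor:main2} by combining Theorem~\ref{thm:main2} with the addition-deletion machinery for simple arrangements (Theorem~\ref{thm:add-del-simple}). The point is to verify the single hypothesis missing from Theorem~\ref{thm:main2}, namely that the deletion $\CA\setminus\{H_0\}$ is free; everything else is then handed off. So suppose $\CA$ and $\CA^{H_0}$ are both free and $\exp\CA^{H_0}\subset\exp\CA$ as multisets.

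First I would invoke Terao's Addition-Deletion Theorem in the form of Theorem~\ref{thm:add-del-simple}. The triple $(\CA,\CA\setminus\{H_0\},\CA^{H_0})$ satisfies the Euler/restriction relation $|\CA| = |\CA\setminus\{H_0\}|+|\CA^{H_0}|$ automatically. By the part of the addition-deletion theorem that upgrades two-out-of-three freeness with the exponent-containment condition, the freeness of $\CA$ together with the freeness of $\CA^{H_0}$ and the inclusion $\exp\CA^{H_0}\subset\exp\CA$ forces the deletion $\CA':=\CA\setminus\{H_0\}$ to be free as well, with $\exp\CA' = (\exp\CA\setminus\exp\CA^{H_0})\cup\{0\}$ (equivalently, $\exp\CA = \exp\CA'\setminus\{0\}\ \cup\ \exp\CA^{H_0}$ after normalizing the leading $1$). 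This is the only genuine input needed, and it is a direct citation rather than an argument.

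Once $\CA\setminus\{H_0\}$ is known to be free, the hypotheses of Theorem~\ref{thm:main2} are met: we have $H_0\in\CA$, the deletion $\CA\setminus\{H_0\}$ is free, and $\CA^{H_0}$ is inductively free (resp.\ additively free, resp.\ recursively free) by assumption. Theorem~\ref{thm:main2} then yields directly that the Ziegler restriction $(\CA^{H_0},\kappa)$ is inductively free (resp.\ additively free, resp.\ recursively free), which is exactly the assertion of the corollary.

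There is essentially no obstacle here — the corollary is a formal consequence, and the proof is a two-line deduction. The only point requiring minor care is making sure Theorem~\ref{thm:add-del-simple} is quoted in the precise direction used: from freeness of the ``big'' arrangement $\CA$ and the ``restriction'' $\CA^{H_0}$ to freeness of the ``deletion'' $\CA\setminus\{H_0\}$, which is exactly the hypothesis-configuration where the exponent-inclusion condition $\exp\CA^{H_0}\subset\exp\CA$ becomes necessary (without it, two-out-of-three freeness need not propagate). Assuming Theorem~\ref{thm:add-del-simple} is stated in a sufficiently symmetric form, this is immediate; otherwise one notes that Terao's theorem is symmetric in the three roles up to the appropriate exponent bookkeeping.
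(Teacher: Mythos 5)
Your proof is correct and is exactly the paper's intended argument: the corollary is presented as immediate from Theorems~\ref{thm:main2} and~\ref{thm:add-del-simple}, i.e., one applies addition--deletion to the triple $(\CA,\CA\setminus\{H_0\},\CA^{H_0})$, using freeness of $\CA$ and $\CA^{H_0}$ together with $\exp\CA^{H_0}\subset\exp\CA$ to conclude that the deletion $\CA\setminus\{H_0\}$ is free, and then invokes Theorem~\ref{thm:main2}. Two incidental slips play no role in the deduction and should be struck: the claimed relation $|\CA| = |\CA\setminus\{H_0\}|+|\CA^{H_0}|$ is false in general and is not a hypothesis of Theorem~\ref{thm:add-del-simple}, and the exponents of the deletion are $\{b_1,\ldots,b_{\ell-1},b_\ell-1\}$ (where $b_\ell$ is the exponent of $\CA$ not accounted for by $\CA^{H_0}$), not your formula involving $\cup\{0\}$.
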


Surprisingly, the analogue of Theorem \ref{thm:main} in the weaker setting of additive freeness turns out to be false. A particular subarrangement in a restriction of the Weyl arrangement of type $E_7$ constructed in  
\cite{hogeroehrle:stairfree} which is additively free but not inductively free provides an example of this kind, see Example \ref{ex:additive}. 

\begin{theorem}
	\label{thm:main6}
	There are arrangements $\CA$ which are additively free such that for some $H_0 \in \CA$ the Ziegler restriction $(\CA^{H_0},\kappa)$ is not additively free.
\end{theorem}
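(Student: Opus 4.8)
The plan is to exhibit a concrete counterexample rather than prove a general structural statement, so the proof of Theorem \ref{thm:main6} will amount to a verification. The starting point is the additively free but \emph{not} inductively free arrangement $\CA$ living inside a restriction of the Weyl arrangement of type $E_7$, as constructed in \cite{hogeroehrle:stairfree}; call this arrangement $\CA$ and fix the hyperplane $H_0\in\CA$ alluded to in Example \ref{ex:additive}. First I would recall from \emph{loc.~cit.} the explicit defining polynomial $Q(\CA)$ together with the additive chain (a chain of free subarrangements witnessing additive freeness) and the fact, also established there, that $\CA$ admits no inductive chain. Then I would compute the Ziegler restriction $(\CA^{H_0},\kappa)$ on $H_0$: determine the restricted arrangement $\CA^{H_0}$ and, for each $Y\in\CA^{H_0}$, the multiplicity $\kappa(Y)=|\CA_Y|-1$.

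Next I would establish that $(\CA^{H_0},\kappa)$ is \emph{not} additively free. The natural route is to use the addition-deletion machinery for multiarrangements (Definitions \ref{def:multaddfree} and the multiarrangement analogue of Theorem \ref{thm:add-del-simple}): an additively free multiarrangement sits at the top of an additive chain, so if I can show that every candidate deletion/restriction step available to $(\CA^{H_0},\kappa)$ fails the exponent-compatibility condition, or leads to a multiarrangement already known to be non-additively-free, I obtain the claim. In practice this is a finite check over the finitely many hyperplanes of $\CA^{H_0}$ and the finitely many ways of lowering $\kappa$ by one; I expect this to be carried out with computer algebra ($\Singular$ or $\Sage$) to confirm freeness/non-freeness of the relevant multiarrangements and their exponents, exactly as in Example \ref{ex:noncombk}.

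To make the example live up to Theorem \ref{thm:main6} in the sharpest form, I would also want to record that $\CA$ really is additively free — this is the input from \cite{hogeroehrle:stairfree} — so that the failure is genuinely about the passage $\CA\mapsto(\CA^{H_0},\kappa)$ and not about $\CA$ itself. If $\CA$ happens to be free (which it is, being additively free), one should double-check that Corollary \ref{cor:main2} does not already force $(\CA^{H_0},\kappa)$ to be additively free; the point is that the hypothesis $\exp\CA^{H_0}\subset\exp\CA$ (or inductive/additive freeness of $\CA^{H_0}$) must fail here, and I would verify that $\CA^{H_0}$ is not additively free, or that the exponent containment fails, so that no contradiction with the positive results arises.

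The main obstacle is the non-additive-freeness verification for the multiarrangement $(\CA^{H_0},\kappa)$: unlike simple arrangements, additive (and inductive) freeness for multiarrangements is genuinely non-combinatorial, so one cannot argue purely on the intersection lattice; one must instead either exhaustively rule out all additive chains ending at $(\CA^{H_0},\kappa)$ using the multiarrangement addition-deletion theorem, or invoke an explicit obstruction such as a direct computation of the module of $\kappa$-multiderivations showing it is free but that no chain of free multiarrangements with the right exponents descends from it. I expect this exhaustive step — organizing and bounding the finitely many chains, and certifying each failure — to be where essentially all the work lies; everything else (writing down $\CA$, computing $\CA^{H_0}$ and $\kappa$, citing additive freeness of $\CA$) is bookkeeping.
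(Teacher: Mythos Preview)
Your proposal identifies the correct example and the correct overall logic, but it misses the structural shortcut that makes the verification clean rather than a brute-force search. The paper does not exhaustively rule out additive chains by computer. Instead it exploits the special shape of the exponents: writing $\CB:=\CA^{H_0}$, one has $\exp(\CB,\kappa)=\{5,5,5,5\}$ and $\pdeg\theta_\kappa=1+|\kappa|-|\CB|=5=\min\exp(\CB,\kappa)$, so $\theta_\kappa$ sits in a basis of $D(\CB,\kappa)$. By Corollary~\ref{cor:thetamu} this forces \emph{every} $(\CB,\mu)$ with $\one\le\mu\le\kappa$ to be free with exponents $\{1+|\mu|-|\CB|,5,5,5\}$. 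Then Lemma~\ref{free_deletion_of_multiarrangement}(i) shows that if any genuine hyperplane deletion $(\CB',\mu')$ along such a chain were free, the \emph{simple} arrangement $\CB'$ would already be free with exponents $\{1,4,5,5\}$; but this is ruled out by the known fact (from \cite{hogeroehrle:stairfree}) that no restriction $\CB''$ of $\CB$ is free with exponents $\{1,5,5\}$. Hence every free descending chain from $(\CB,\kappa)$ is trapped above $(\CB,\one)$, and since $\CB$ itself is not additively free, neither is $(\CB,\kappa)$.

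Two smaller remarks. First, additive freeness (Definition~\ref{def:multaddfree}) involves only a chain of free sub-multiarrangements; there are no restriction steps, so the ``deletion/restriction'' language in your plan is slightly off and could lead you to check irrelevant objects. Second, your instinct that one must confirm $\CA^{H_0}$ is not additively free is exactly right --- that is the terminal ingredient in the paper's argument --- but note that here $\exp\CA^{H_0}=\{1,5,5,5\}\subset\{1,5,5,5,5\}=\exp\CA$, so the exponent containment in Corollary~\ref{cor:main2} actually \emph{holds}; what fails is the additive freeness of $\CA^{H_0}$ (equivalently, the freeness of $\CA\setminus\{H_0\}$), and that is precisely what keeps the example outside the reach of the positive theorems.
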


We briefly discuss some immediate consequences of the theorems above.

In \cite[Thm.~1.4]{hogeroehrle:Ziegler}, we gave a complete classification of 
all complex reflection arrangements $\CA = \CA(W)$, for $W$ a complex reflection group,  
so that $(\CA^{H_0},\kappa)$ is inductively free, for every $H_0 \in \CA$. In part this involved carrying out 
various intricate induction tables or complicated arguments, depending on computer calculations.  
With the aid of Theorem \ref{thm:main} the proof can be simplified and shorted considerably, as illustrated by the following consequence of  Theorem \ref{thm:main}.
Let $W$ be a Coxeter group and let  $\CA(W)$ be the associated Coxeter arrangement.
Coxeter arrangements $\CA(W)$ are known to be inductively free, owing to \cite{cuntz:indfree}. Therefore, 
thanks to Theorem \ref{thm:main}, we immediately recover the following 
without any further case by case analysis and free of any computer computations.

\begin{corollary}
	\label{cor:coxeter}
	Let $\CA = \CA(W)$ be a Coxeter arrangement. Then every Ziegler restriction $(\CA'',\kappa)$ of $\CA$ is inductively free.
\end{corollary}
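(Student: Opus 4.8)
The final statement to prove is Corollary~\ref{cor:coxeter}: every Ziegler restriction $(\CA'',\kappa)$ of a Coxeter arrangement $\CA=\CA(W)$ is inductively free.

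\medskip

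The plan is to deduce this directly as a special case of Theorem~\ref{thm:main}. The only input needed is that Coxeter arrangements are inductively free, which is precisely the content of Cuntz's theorem \cite{cuntz:indfree}. So the argument is: let $\CA = \CA(W)$ be a Coxeter arrangement for a Coxeter group $W$. By \cite{cuntz:indfree}, $\CA$ is inductively free. Now fix any hyperplane $H_0 \in \CA$ and form the Ziegler restriction $(\CA'',\kappa) = (\CA^{H_0},\kappa)$ as in Definition~\ref{def:kappa}. Theorem~\ref{thm:main} applies verbatim: since $\CA$ is inductively free, the multiarrangement $(\CA'',\kappa)$ is inductively free. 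Since $H_0$ was arbitrary, every Ziegler restriction is inductively free, which is the claim.

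\medskip

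There is essentially no obstacle here; this corollary is a one-line consequence of the main theorem once the inductive freeness of Coxeter arrangements is cited. The only points worth a sentence of care are: (i) one should make clear the statement is quantified over all $H_0\in\CA$, so that the phrase ``every Ziegler restriction'' is justified — but this is immediate since Theorem~\ref{thm:main} holds for any choice of $H_0$; and (ii) the underlying field: Coxeter arrangements are real (or, after extension, one may work over any characteristic-zero field), and Theorem~\ref{thm:main} is stated without field restrictions beyond those in force throughout the paper, so there is nothing extra to check. The point of stating this corollary is rhetorical rather than technical — it shows that a result previously obtained in \cite{hogeroehrle:Ziegler} by laborious case-by-case induction tables and computer assistance now drops out instantly from Theorem~\ref{thm:main}.

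\medskip

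If one wanted to be slightly more self-contained, an alternative route would bypass \cite{cuntz:indfree} for the classical types and instead invoke the well-known supersolvability / inductive-freeness of the braid arrangement and its relatives together with the exceptional cases, but this is strictly more work and unnecessary here; the clean statement is simply ``inductively free (Cuntz) $+$ Theorem~\ref{thm:main}.'' I would therefore present the proof in two sentences and not belabour it.
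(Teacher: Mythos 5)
Your proof is correct and is exactly the paper's argument: the paper likewise cites the Barakat--Cuntz result \cite{cuntz:indfree} that Coxeter arrangements are inductively free and then applies Theorem~\ref{thm:main} for an arbitrary $H_0 \in \CA$. Nothing further is needed.
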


The proof of Corollary \ref{cor:coxeter} in the case 
when $W$ is of type $E_8$ in \cite{hogeroehrle:Ziegler} is rather involved,  entailing intricate computer calculations.

In \cite[Thm.~5.10]{abeteraowakefield:euler},
certain multiplicities defined on supersolvable arrangements were shown to be 
inductively free. 
Since supersolvable arrangements are inductively free \cite[Thm.~4.58]{orlikterao:arrangements}, we immediately get the following consequence from Theorem \ref{thm:main}. 

\begin{corollary}
	\label{cor:super}
	For $\CA$ supersolvable, any Ziegler restriction $(\CA'',\kappa)$ 
	is inductively free.
\end{corollary}

The reflection arrangement 
of the complex reflection group $G(r,p,\ell)$
is supersolvable for all $r, \ell \ge 2$ and $p \ne r$ if $\ell \ge 3$, cf.~\cite[Thm.~1.2]{hogeroehrle:super}. As part of the classification in \cite[Thm.~1.4]{hogeroehrle:Ziegler}, we showed the following, which now follows readily from Corollary \ref{cor:super}.

\begin{corollary}
	\label{cor:grrl}
	Let $\CA = \CA(G(r,p,\ell))$ for $r, \ell \ge 2$ and $p \ne r$  if $\ell \ge 3$. Then  $(\CA^{H_0},\kappa)$ is inductively free, for every $H_0\in\CA$.
\end{corollary}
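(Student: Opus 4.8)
The plan is to deduce Corollary \ref{cor:grrl} directly from Corollary \ref{cor:super}. First I would recall from \cite[Thm.~1.2]{hogeroehrle:super} that the reflection arrangement $\CA(G(r,p,\ell))$ is supersolvable under the stated hypotheses on $r,p,\ell$: namely for all $r,\ell\ge 2$, and additionally requiring $p\ne r$ when $\ell\ge 3$. (In the remaining rank-$2$ case $\ell=2$ every arrangement is trivially supersolvable, and for $\ell=1$ there is nothing to prove, so the hypothesis $\ell\ge 2$ covers everything.) In particular $\CA=\CA(G(r,p,\ell))$ falls within the scope of Corollary \ref{cor:super}.

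Next I would simply invoke Corollary \ref{cor:super}: since $\CA$ is supersolvable, any Ziegler restriction $(\CA'',\kappa)$ of $\CA$ is inductively free. Spelling this out for the statement at hand, fix an arbitrary $H_0\in\CA$; then $\CA''=\CA^{H_0}$ endowed with the Ziegler multiplicity $\kappa$ is inductively free. As this holds for every $H_0\in\CA$, the corollary follows.

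There is essentially no obstacle here — the entire content has already been packaged into Corollary \ref{cor:super} (which itself rests on Theorem \ref{thm:main} together with the fact \cite[Thm.~4.58]{orlikterao:arrangements} that supersolvable arrangements are inductively free), and the only genuinely external input is the supersolvability classification \cite[Thm.~1.2]{hogeroehrle:super}. The one point worth a sentence of care is matching the hypotheses: one should check that the inequalities $r,\ell\ge 2$ and the exceptional condition $p\ne r$ for $\ell\ge 3$ are exactly the conditions under which \cite[Thm.~1.2]{hogeroehrle:super} guarantees supersolvability, so that no case is silently dropped. Given that alignment, the proof is a one-line citation, and its main value is pedagogical: it replaces the case-by-case verification originally carried out as part of \cite[Thm.~1.4]{hogeroehrle:Ziegler} with an immediate appeal to the general result.
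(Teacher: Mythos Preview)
Your proposal is correct and follows exactly the paper's own approach: cite \cite[Thm.~1.2]{hogeroehrle:super} for supersolvability of $\CA(G(r,p,\ell))$ under the stated hypotheses and then invoke Corollary~\ref{cor:super}. The paper presents this as an immediate consequence with no additional argument, so there is nothing to add.
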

 

The so called 
\emph{arrangements of ideal type}
$\CA_\CI$ are certain natural subarrangements 
of Weyl arrangements associated with 
ideals in the set of positive roots of 
a reduced root system.
They are known to be free due to work of Sommers and Tymoczko \cite{ST06} and 
Abe et al \cite{p-ABCHT-14}. In particular, in \emph{loc.~cit}, the $\CA_\CI$ were shown to be free 
in a uniform fashion where the exponents are given in a combinatorial manner.
This in turn lead to the question whether all $\CA_\CI$ do satisfy the stronger property of inductive freeness.
This was settled affirmatively in 
\cite{gR17} and \cite{cuntzroehrleschauenburg:ideal}.
Thus  Theorem \ref{thm:main} immediately entails the following.

\begin{corollary}
	\label{cor:ideal}
Let $\CA_\CI$ be an ideal subarrangement of the Weyl
arrangement $\CA(W)$. 
Then every Ziegler restriction $(\CA_\CI'',\kappa)$ of $\CA_\CI$ is inductively free.
\end{corollary}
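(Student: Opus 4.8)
The plan is to deduce Corollary \ref{cor:ideal} immediately from Theorem \ref{thm:main} together with the already-known inductive freeness of ideal subarrangements. First I would recall that an ideal subarrangement $\CA_\CI$ of a Weyl arrangement $\CA(W)$ is, by the work of \cite{gR17} and \cite{cuntzroehrleschauenburg:ideal}, inductively free. This is the key input; the statement is quoted verbatim above and may be assumed.

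Next, I would invoke Theorem \ref{thm:main} directly: since $\CA_\CI$ is inductively free, any Ziegler restriction $(\CA_\CI'',\kappa)$ of $\CA_\CI$ (to an arbitrary hyperplane $H_0 \in \CA_\CI$, with $\kappa$ the canonical Ziegler multiplicity on $\CA_\CI'' = \CA_\CI^{H_0}$) is inductively free. That is literally the conclusion of Theorem \ref{thm:main} applied to the arrangement $\CA = \CA_\CI$, so there is nothing further to check.

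There is essentially no obstacle here: the corollary is a one-line consequence of combining an external classification result with the main theorem of the paper. The only point worth a sentence is that Theorem \ref{thm:main} applies to \emph{every} choice of $H_0$, so the conclusion holds for every Ziegler restriction of $\CA_\CI$, matching the phrasing of the corollary. If one wanted to be slightly more careful, one could note that $\CA_\CI$ is defined over $\BBQ$ (indeed over $\BBZ$) and hence there is no issue with the field of definition in forming the multirestriction, but this is not needed for the argument.

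\begin{proof}
By \cite{gR17} and \cite{cuntzroehrleschauenburg:ideal}, every ideal subarrangement $\CA_\CI$ of a Weyl arrangement is inductively free. The statement now follows at once from Theorem \ref{thm:main} applied to $\CA = \CA_\CI$, as this yields the inductive freeness of $(\CA_\CI'',\kappa)$ for every $H_0 \in \CA_\CI$.
\end{proof}
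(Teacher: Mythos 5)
Your proposal is correct and matches the paper's own derivation exactly: the paper also cites \cite{gR17} and \cite{cuntzroehrleschauenburg:ideal} for the inductive freeness of $\CA_\CI$ and then applies Theorem \ref{thm:main} to conclude. Nothing further is needed.
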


Note that in general, if $(\CA, \mu)$ is inductively free (resp.~additively free, resp.~recursively free), it need not follow that $\CA$ itself is inductively free (resp.~additively free, resp.~recursively free). That is, while there is an inductive (resp.~additive, resp.~recursive) chain of free subarrangements from the empty arrangement to $(\CA, \mu)$ (cf.~Remark \ref{rem:recchain}, Definition \ref{def:multaddfree}), such a chain need not pass through the 
simple arrangement $\CA$ itself.
We illustrate this by our next example.

\begin{example}
	\label{ex:indfree-recfree}	
	Thanks to Corollary \ref{cor:ideal},
	for an ideal arrangement $\CA_\CI$, any Ziegler restriction    
	$(\CA''_\CI, \kappa)$ of $\CA_\CI$ is inductively free. 
	However, 
	there are certain ideal arrangements $\CA_\CI$ in Weyl arrangements of type $D$ which admit restrictions $\CA''_\CI$ that are not free, see \cite[Rem.~3.6]{amendmoellerroehrle:aspherical}. It thus follows that in each of these instances the inductively free Ziegler restriction $(\CA''_\CI, \kappa)$ does not admit a chain of free subarrangements passing through the simple arrangement $\CA''_\CI$, as the latter itself is not free.
\end{example}

\begin{example}
	\label{ex:shi}
Thanks to work of Edelman and Reiner \cite[Thm.~3.2]{ER96_FreeArrRhombicTilings} and Athanasiadis \cite[Thm.~3.3]{Atha98}, respectively, the cones of the extended 
Catalan arrangements in type $A_\ell$ and cones of the extended 
Shi arrangements in type $A_\ell$ are inductively free.
As a consequence of Theorem \ref{thm:main}, the multiarrangements $(\CA(A_\ell), \mu)$ are inductively free, where $\CA(A_\ell)$ is the reflection arrangement of type $A_\ell$ and $\mu = c \cdot \one$, for $c \in \BBZ_{\ge 1}$, is a constant multiplicity on $\CA(A_\ell)$. The latter fact was proved in \cite[Thm.~1.1]{conradroehrle:indfree} in an elaborate induction argument.
\end{example}

We continue with some applications of Theorems \ref{thm:main5} and \ref{thm:main2}.

\begin{example}
	\label{ex:g31}
	Let $\CA = \CA(G_{31})$ be the reflection arrangement of the complex reflection group of type $G_{31}$. 
	Note that while $\CA$ is not inductively free, by  \cite[Lem.~3.5]{hogeroehrle:inductivelyfree},
	$\CA^{H_0}$ is inductively free for each $H_0 \in \CA$, thanks to \cite[Lem.~4.1]{amendhogeroehrle:indfree}. 
	Since $\exp \CA^{H_0} \subset \exp \CA$, owing to \cite[Table C.12]{orlikterao:arrangements}, it follows from Corollary \ref{cor:main2} that 
	 $(\CA^{H_0},\kappa)$ is inductively free, for every $H_0 \in \CA$. This was first shown in  \cite[Cor.~3.14]{hogeroehrle:Ziegler} by more complicated means. 
	 
	 This example is going to make another appearance in \S \ref{Yconverse} below in connection with the question of a converse to Theorem \ref{thm:main}.
\end{example}

\begin{example}
	\label{ex:akl}
	Orlik and Solomon defined so called intermediate arrangements $\CA^k_\ell(r)$,  
cf.~\cite[\S 6.4]{orlikterao:arrangements}, which
interpolate between the
reflection arrangements of the imprimitive complex reflection groups $G(r,r,\ell)$ and $G(r,1,\ell)$. 
They show up as restrictions of the reflection arrangement
of $G(r,r,n)$, 
cf.~\cite[Prop.\ 6.84]{orlikterao:arrangements}.

For $r, \ell \geq 2$ and $0 \leq k \leq \ell$, the defining polynomial of
$\CA^k_\ell(r)$ is given by
\[
Q(\CA^k_\ell(r)) = x_1 \cdots x_k \prod\limits_{1 \leq i < j \leq \ell}(x_i^r - x_j^r),
\]
so that 
$\CA^\ell_\ell(r) = \CA(G(r,1,\ell))$ and 
$\CA^0_\ell(r) = \CA(G(r,r,\ell))$. 
For $k \neq 0, \ell$, these are not reflection arrangements
themselves. 

Let $\CA = \CA^k_\ell(r)$ for $1 \leq k \leq \ell$. 
First let $H_0$ be a coordinate hyperplane in $\CA$. 
Then, owing to \cite[Prop.\ 6.82]{orlikterao:arrangements},  $\CA^{H_0} \cong \CA^{\ell-1}_{\ell-1}(r) = \CA(G(r,1,\ell-1))$. The latter is inductively free, by \cite[Thm.~1.1]{hogeroehrle:inductivelyfree}. 
Since $\CA' \cong \CA^{k-1}_\ell(r)$ is also free, 
it follows from Theorem \ref{thm:main2} that 
$(\CA^{H_0},\kappa)$ is also inductively free.
Observe that $\CA$ itself is \emph{not} inductively free, for $r, \ell  \ge 3$ and $0 \leq k \leq \ell -3$, due to  \cite[Lem.~4.1]{amendhogeroehrle:indfree}.

Next suppose $\ell \ge 5$ and let $H_0 = \ker(x_1-x_2)$. Then $(\CA^{H_0},\kappa)$ is no longer inductively free, according to \cite[Prop.~4.2]{hogeroehrleschauenburg:free}.
Nevertheless, by \cite[Thm.~3.6(ii)]{amendhogeroehrle:indfree}, $\CA = \CA^k_\ell(r)$ is recursively free for $r, \ell  \ge 3$ and any $0 \leq k \leq \ell$. It follows from Theorem \ref{thm:main5} that 
$(\CA^{H_0},\kappa)$ is still recursively free, for every $H_0 \in \CA$. 
\end{example}

In view of Example \ref{ex:akl}, with the aid of Theorems \ref{thm:main} and \ref{thm:main2}, it is feasible to obtain a 
classification of all instances when the Ziegler restriction is inductively free where the underlying simple arrangement is a 
restriction of a complex reflection arrangement $\CA(W)$; this is carried out in   
 \cite{hogeroehrle:ZieglerIII}. 

The paper is organized as follows.
In Section \ref{ssect:hyper} we recall 
the fundamental results for free arrangements, 
in particular Terao's 
Addition Deletion Theorem \ref{thm:add-del-simple}
and the subsequent
notion of an inductively free arrangement.
In Definition \ref{def:addfree} we recapitulate Abe's weaker notion 
of additive freeness, cf.~\cite[Def.~1.6]{abe:sf}.
In \S \ref{ssec:multi} and the following sections we 
introduce the concept of multiarrangements and basic properties of their freeness. In particular, we recall the 
fundamental Addition-Deletion Theorem  \ref{thm:add-del} in this setting due to Abe, Terao and Wakefield, 
\cite[Thm.~0.8]{abeteraowakefield:euler}.

In \S \ref{ssec:inductive}, \S\ref{ssec:recursive}, and \S\ref{ssec:additive}, we introduce the notions of inductive, recursive and additive freeness for multiarrangements, respectively. 
Section \ref{sec:filtrations} is devoted to  
the concept of free filtrations. Apart from one crucial new result, Lemma \ref{lemma:free_sequence_to_simple_multiplicity},
here we mainly recall facts delineated in 
\cite[\S3]{hogeroehrle:Ziegler}.

The proofs of Theorems \ref{thm:main} to \ref{thm:main6}
are carried out in Section \ref{Sec:proofs}. Finally, we discuss some complements to our main developments in the last section.

\section{Recollections and Preliminaries}
\label{sect:prelim}

\subsection{}
\label{ssect:hyper}
Let $V = \BBK^\ell$ 
be an $\ell$-dimensional $\BBK$-vector space.
A \emph{hyperplane arrangement} is a pair
$(\CA, V)$, where $\CA$ is a finite collection of hyperplanes in $V$.
Usually, we simply write $\CA$ in place of $(\CA, V)$.
We write $|\CA|$ for the number of hyperplanes in $\CA$.
The empty arrangement in $V$ is denoted by $\Phi_\ell$.

\subsection{}
The \emph{lattice} $L(\CA)$ of $\CA$ is the set of subspaces of $V$ of
the form $H_1\cap \dotsm \cap H_i$ where $\{ H_1, \ldots, H_i\}$ is a subset
of $\CA$. 
For $X \in L(\CA)$, we have two associated arrangements, 
firstly
$\CA_X :=\{H \in \CA \mid X \subseteq H\} \subseteq \CA$,
the \emph{localization of $\CA$ at $X$}, 
and secondly, 
the \emph{restriction of $\CA$ to $X$}, $(\CA^X,X)$, where 
$\CA^X := \{ X \cap H \mid H \in \CA \setminus \CA_X\}$.
Note that $V$ belongs to $L(\CA)$
as the intersection of the empty 
collection of hyperplanes and $\CA^V = \CA$. 
The lattice $L(\CA)$ is a partially ordered set by reverse inclusion:
$X \le Y$ provided $Y \subseteq X$ for $X,Y \in L(\CA)$.

Suppose $\CA \neq \Phi_\ell$. Fix a member $H_0$ in $\CA$. Set $\CA' = \CA \setminus \{H_0\}$ and  $\CA'' = \CA^{H_0}$. Then $(\CA, \CA', \CA'')$ is frequently referred to as a \emph{triple of arrangements}. 

\subsection{}
\label{ssect:free}
Let $S = S(V^*)$ be the symmetric algebra of the dual space $V^*$ of $V$.
If $x_1, \ldots , x_\ell$ is a basis of $V^*$, then we identify $S$ with 
the polynomial ring $\BBK[x_1, \ldots , x_\ell]$.
Letting $S_p$ denote the $\BBK$-subspace of $S$
consisting of the homogeneous polynomials of degree $p$ (along with $0$),
$S$ is naturally $\BBZ$-graded: $S = \oplus_{p \in \BBZ}S_p$, where
$S_p = 0$ in case $p < 0$.

Let $\Der(S)$ be the $S$-module of algebraic $\BBK$-derivations of $S$.
Using the $\BBZ$-grading on $S$, $\Der(S)$ becomes a graded $S$-module.
For $i = 1, \ldots, \ell$, 
let $D_i := \partial/\partial x_i$ be the usual derivation of $S$.
Then $D_1, \ldots, D_\ell$ is an $S$-basis of $\Der(S)$.
We say that $\theta \in \Der(S)$ is 
\emph{homogeneous of polynomial degree p}
provided 
$\theta = \sum_{i=1}^\ell f_i D_i$, 
where $f_i$ is either $0$ or homogeneous of degree $p$
for each $1 \le i \le \ell$.
In this case we write $\pdeg \theta = p$.

Let $\CA$ be an arrangement in $V$. 
Then for $H \in \CA$ we fix $\alpha_H \in V^*$ with
$H = \ker(\alpha_H)$.
The \emph{defining polynomial} $Q(\CA)$ of $\CA$ is given by 
$Q(\CA) := \prod_{H \in \CA} \alpha_H \in S$.

The \emph{module of $\CA$-derivations} of $\CA$ is 
defined by 
\[
D(\CA) := \{\theta \in \Der(S) \mid \theta(\alpha_H) \in \alpha_H S
\text{ for each } H \in \CA \} .
\]
We say that $\CA$ is \emph{free} if the module of $\CA$-derivations
$D(\CA)$ is a free $S$-module.

With the $\BBZ$-grading of $\Der(S)$, 
$D(\CA)$ also 
becomes a graded $S$-module,
\cite[Prop.~4.10]{orlikterao:arrangements}.
If $\CA$ is a free arrangement, then the $S$-module 
$D(\CA)$ admits a basis of $\ell$ homogeneous derivations, 
say $\theta_1, \ldots, \theta_\ell$, \cite[Prop.~4.18]{orlikterao:arrangements}.
While the $\theta_i$'s are not unique, their polynomial 
degrees $\pdeg \theta_i$ 
are unique (up to ordering). This multiset is the set of 
\emph{exponents} of the free arrangement $\CA$
and is denoted by $\exp \CA$.

\subsection{}

The fundamental \emph{Addition Deletion Theorem} 
due to Terao  \cite{terao:freeI} plays a 
crucial role in the study of free arrangements, 
\cite[Thm.~4.51]{orlikterao:arrangements}.

\begin{theorem}
\label{thm:add-del-simple}
Suppose $\CA \neq \Phi_\ell$ and
let $(\CA, \CA', \CA'')$ be a triple of arrangements. Then any 
two of the following statements imply the third:
\begin{itemize}
\item[(i)] $\CA$ is free with $\exp\CA = \{ b_1, \ldots , b_{\ell -1}, b_\ell\}$;
\item[(ii)] $\CA'$ is free with $\exp\CA' = \{ b_1, \ldots , b_{\ell -1}, b_\ell-1\}$;
\item[(iii)] $\CA''$ is free with $\exp\CA'' = \{ b_1, \ldots , b_{\ell -1}\}$.
\end{itemize}
\end{theorem}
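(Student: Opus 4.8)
The plan is to compare the three derivation modules through natural maps and then to read off freeness and exponents by Saito's criterion (recall: $\ell$ homogeneous derivations in $D(\CA)$ form an $S$-basis if and only if they are $S$-linearly independent and their polynomial degrees sum to $|\CA|$, equivalently if and only if their coefficient determinant is a nonzero scalar multiple of $Q(\CA)$). Fixing $H_0=\ker\alpha_0\in\CA$ and writing $\CA'=\CA\setminus\{H_0\}$, $\CA''=\CA^{H_0}$, $\overline{S}=S/\alpha_0 S$, I would first record the elementary facts: $\alpha_0 D(\CA')\subseteq D(\CA)\subseteq D(\CA')$; the $S$-linear map $\delta\colon D(\CA')\to\overline{S}$, $\theta\mapsto\overline{\theta(\alpha_0)}$, has kernel exactly $D(\CA)$; and, after choosing $\alpha_0$ as one of the coordinates, reduction of coefficients modulo $\alpha_0$ defines a degree-preserving $\overline{S}$-linear map $\rho\colon D(\CA)\to D(\CA'')$ with kernel exactly $\alpha_0 D(\CA')\cong D(\CA')(-1)$. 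This yields the exact sequences $0\to D(\CA)\to D(\CA')\xrightarrow{\delta}\overline{S}$ and $0\to\alpha_0 D(\CA')\to D(\CA)\xrightarrow{\rho}D(\CA'')$; note also $Q(\CA)=\alpha_0 Q(\CA')$ and $\deg Q(\CA'')=|\CA''|$.

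The crux — the step I expect to be the main obstacle — is to upgrade the second sequence to a \emph{short} exact sequence $0\to\alpha_0 D(\CA')\to D(\CA)\xrightarrow{\rho}D(\CA'')\to 0$, i.e.\ to prove that $\rho$ is surjective, granted that two of $\CA,\CA',\CA''$ are free. Surjectivity is genuinely not automatic: it fails already for generic non-free arrangements, and this is precisely why the hypothesis that a second arrangement be free cannot be dropped. I would study $C:=\operatorname{coker}\rho$, a finitely generated graded $\overline{S}$-module. After localizing $\overline{S}$ at any prime of height $\le 1$, every arrangement in sight has rank $\le 2$, hence is free, and the localized $\rho$ is seen directly to be onto; thus $C$ is supported in codimension $\ge 2$ in $\overline{S}$. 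On the other hand, comparing the two exact sequences above by a snake-lemma argument identifies $C$, up to a degree shift, with $\overline{S}/J$ where $J=\operatorname{im}\delta$; feeding in the two freeness-plus-exponent hypotheses — here Saito's criterion enters, e.g.\ through the fact that the change-of-basis matrix $M$ from a basis of $D(\CA')$ to a basis of $D(\CA)$ has $\det M\doteq\alpha_0$ — one shows this quotient cannot be a nonzero module of codimension $\ge 2$, forcing $C=0$.

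Once the short exact sequence $0\to\alpha_0 D(\CA')\to D(\CA)\to D(\CA'')\to 0$ is available, the theorem follows formally. If two of the three modules $D(\CA)$, $\alpha_0 D(\CA')\cong D(\CA')(-1)$, $D(\CA'')$ are free over their respective rings, then so is the third: if $D(\CA)$ and $D(\CA')$ are free then $D(\CA'')$ is annihilated by $\alpha_0$ and has projective dimension $\le 1$ over $S$, hence is free over $\overline{S}$ by Auslander–Buchsbaum; if instead $D(\CA')$ and $D(\CA'')$ are free (resp.\ $D(\CA)$ and $D(\CA'')$ are free), a Schanuel/syzygy argument on the sequence gives that $D(\CA)$ (resp.\ $\alpha_0 D(\CA')$, hence $D(\CA')$) is free. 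The exponents are then pinned down by the resulting additive identity of Hilbert series $H(D(\CA))=t\cdot H(D(\CA'))+H(D(\CA''))$: solving for the unknown module in each of the three cases forces exactly the exponent relations in (i), (ii), (iii). (Alternatively, in the case (ii)$+$(iii)$\Rightarrow$(i) one argues concretely: lift a basis $\overline{\psi}_1,\dots,\overline{\psi}_{\ell-1}$ of $D(\CA'')$ of degrees $b_1,\dots,b_{\ell-1}$ along the surjection $\rho$ to $\psi_i\in D(\CA)$ of the same degrees, adjoin $\alpha_0\eta_\ell\in D(\CA)$ of degree $b_\ell$ where $\eta_\ell$ is the degree-$(b_\ell-1)$ member of a basis of $D(\CA')$, verify $S$-independence by reducing a relation modulo $\alpha_0$ and then cancelling a factor $\alpha_0$, and invoke Saito's criterion since $\sum_{i<\ell}b_i+b_\ell=|\CA|$.) So essentially all the work is concentrated in the surjectivity of $\rho$.
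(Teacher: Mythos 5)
The paper itself offers no proof of this statement: it is Terao's classical Addition--Deletion Theorem, quoted verbatim from \cite{terao:freeI} and \cite[Thm.~4.51]{orlikterao:arrangements}. Judged on its own terms, your proposal sets up the standard exact sequences correctly and correctly locates the difficulty, but it has two genuine gaps, both at the steps you yourself flag as load-bearing. First, the claimed identification of $C=\operatorname{coker}\rho$ with $\overline{S}/J$ (up to shift) is false, not merely unproved. Whenever $\CA'$ is essential one has $D(\CA')_0=0$, so $J$ contains no unit and $\overline{S}/J\neq 0$; yet $C=0$ in every free instance. Concretely, for $Q(\CA)=xy(x+y)$ with $H_0=\ker x$ one computes $J=\bar{y}\,\overline{S}\subsetneq\overline{S}$ while $\rho$ is onto. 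The correct comparison object is $B\overline{S}/J$, where $B\in\overline{S}$ is Terao's polynomial defined by $\overline{Q(\CA')}=B\cdot Q(\CA'')$; establishing the containment $J\subseteq B\overline{S}$ and then the equality $J=B\overline{S}$ under the stated hypotheses (i.e.\ producing $\theta\in D(\CA')$ with $\overline{\theta(\alpha_0)}\doteq B$) is precisely the hard content of Terao's argument and is absent here. Moreover, ``this quotient cannot be a nonzero module of codimension $\ge 2$'' is not an argument: a cyclic module $\overline{S}/J$ with $J$ of height at least two is a perfectly good nonzero module supported in codimension $\ge 2$.

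Second, even granting the short exact sequence $0\to\alpha_0 D(\CA')\to D(\CA)\to D(\CA'')\to 0$, the purely homological deduction fails in the addition direction (ii)$+$(iii)$\Rightarrow$(i). Take $Q(\CA)=xyz(x+y+z)$ and $H_0=\ker(x+y+z)$: then $\CA'$ is Boolean and $\CA''$ consists of three concurrent lines, both free, and $\rho$ is surjective because $\CA'$ is free (cf.\ \cite[Thm.~1.13]{abe:projectivedimension}), so the sequence holds with both outer terms free --- yet $\CA$ is not free, \cite[Ex.~4.34]{orlikterao:arrangements}. Your Schanuel/mapping-cone argument only yields $\operatorname{pd}_S D(\CA)\le 1$, hence depth $\ge \ell-1$ by Auslander--Buchsbaum, which does not force freeness. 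The exponent hypothesis in (ii)$+$(iii) must therefore enter the freeness argument itself, not merely the Hilbert-series bookkeeping afterwards. Your parenthetical concrete argument (lift a basis of $D(\CA'')$, adjoin $\alpha_0\eta_\ell$, apply Saito's criterion) is the right repair for that case, but it must replace the homological one rather than serve as an optional alternative, and it still presupposes the surjectivity of $\rho$, which has to be established along the corrected lines above.
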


\subsection{}

Theorem \ref{thm:add-del-simple} motivates the following notion.

\begin{defn}
	[{\cite[Def.~4.53]{orlikterao:arrangements}}]
\label{def:indfree-simple}
The class $\CIF$ of \emph{inductively free} arrangements 
is the smallest class of arrangements subject to
\begin{itemize}
\item[(i)] $\Phi_\ell \in \CIF$ for each $\ell \ge 0$;
\item[(ii)] if there exists a hyperplane $H_0 \in \CA$ such that both
$\CA'$ and $\CA''$ belong to $\CIF$, and $\exp \CA '' \subseteq \exp \CA'$, 
then $\CA$ also belongs to $\CIF$.
\end{itemize}
\end{defn}

\subsection{}

In view of Theorem \ref{thm:add-del-simple}, it is also natural to consider the following property, \cite[Def.~1.6]{abe:sf} which is weaker than inductive freeness. Note, this definition is equivalent to \emph{loc.~cit.}, cf.~\cite[Rem.~1.7]{abe:sf}.

\begin{defn}
	\label{def:addfree}
	An arrangement $\CA$ is called 
	\emph{additively free} if there is a \emph{free filtration} 
	\[
	\Phi_\ell = \CA_0 \subset \CA_1 \subset \cdots \subset \CA_n = \CA,
	\]
	of $\CA$, i.e., where each $\CA_i$ is free with $|\CA_i| = i$ for each $i$. 
	Denote this class by $\CAF$.
\end{defn}

\begin{remark}
	In view of  \cite[Thm.~4.46]{orlikterao:arrangements}, we have $\CIF \subseteq \CAF$.
	However, there are examples of  arrangements  which are additively free but not inductively free, see \cite{hogeroehrle:stairfree}.
\end{remark}

\subsection{}
\label{ssec:multi}
A \emph{multiarrangement}  is a pair
$(\CA, \mu)$ consisting of a hyperplane arrangement $\CA$ and a 
\emph{multiplicity} function
$\mu : \CA \to \BBZ_{\ge 0}$ associating 
to each hyperplane $H$ in $\CA$ a non-negative integer $\mu(H)$.
The \emph{order} of the multiarrangement $(\CA, \mu)$ 
is defined by 
$|\mu| := |(\CA, \mu)| = \sum_{H \in \CA} \mu(H)$.
The \emph{defining polynomial} $Q(\CA, \mu)$ 
of the multiarrangement $(\CA, \mu)$ is given by 
\[
Q(\CA, \mu) := \prod_{H \in \CA} \alpha_H^{\mu(H)},
\] 
a polynomial of degree $|\mu|$ in $S$, where $\alpha_H \in V^*$ such that $H = \ker \alpha_H$.

For a multiarrangement $(\CA, \mu)$, the underlying 
arrangement $\CA$ is sometimes called the associated 
\emph{simple} arrangement, and so $(\CA, \mu)$ itself is  
simple if and only if $\mu(H) = 1$ for each $H \in \CA$. 
In the sequel, we denote by $\one$ the simple multiplicity on a given arrangement. 

Let $(\CA, \mu)$ be a multiarrangement in $V$ and let 
$X \in L(\CA)$. The 
\emph{localization of $(\CA, \mu)$ at $X$} is defined to be $(\CA_X, \mu_X)$,
where $\mu_X = \mu |_{\CA_X}$.

\subsection{}
\label{ssec:freemulti}
Following Ziegler \cite{ziegler:multiarrangements},
we extend the notion of freeness to multiarrangements as follows.
Let $\alpha_H \in V^*$ such that $H = \ker \alpha_H$ for each $H \in \CA$.
The \emph{module of $\CA$-derivations} of $(\CA, \mu)$ is 
defined by 
\[
D(\CA, \mu) := \{\theta \in \Der(S) \mid \theta(\alpha_H) \in \alpha_H^{\mu(H)} S 
\text{ for each } H \in \CA\}.
\]
We say that $(\CA, \mu)$ is \emph{free} if 
$D(\CA, \mu)$ is a free $S$-module, 
\cite[Def.~6]{ziegler:multiarrangements}.

As in the case of simple arrangements,
$D(\CA, \mu)$ is a $\BBZ$-graded $S$-module and 
thus, if $(\CA, \mu)$ is free, there is a 
homogeneous basis $\theta_1, \ldots, \theta_\ell$ of $D(\CA, \mu)$.
The multiset of the unique polynomial degrees $\pdeg \theta_i$ 
forms the set of \emph{exponents} of the free multiarrangement $(\CA, \mu)$
and is denoted by $\exp (\CA, \mu)$.

Next we recall Ziegler's analogue of Saito's criterion.

\begin{theorem}
	[{\cite[Thm.~8]{ziegler:multiarrangements}}]
	\label{thm:ziegler-saito}
	For $\theta_1, \dots, \theta_\ell$ in $D(\CA, \mu)$, the following are equivalent:
	\begin{itemize}
		\item[(i)]  $\{\theta_1, \dots, \theta_\ell\}$ is a basis of $D(\CA, \mu)$,
		\item[(ii)] $	\det M(\theta_1, \dots, \theta_\ell)\ \dot=\ Q(\CA, \mu)$.
	\end{itemize}
\end{theorem}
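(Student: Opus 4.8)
My plan is to follow the blueprint of Saito's criterion for simple arrangements, with $Q(\CA)$ replaced throughout by $Q(\CA,\mu)$. Write each $\theta_i=\sum_{j=1}^\ell f_{ij}D_j$, so that $M(\theta_1,\dots,\theta_\ell)=(f_{ij})$, and note that $\det M(\theta_1,\dots,\theta_\ell)$ is unchanged up to a nonzero scalar under a linear change of coordinates. The crux is the divisibility statement: \emph{for any $\theta_1,\dots,\theta_\ell\in D(\CA,\mu)$ one has $Q(\CA,\mu)\mid\det M(\theta_1,\dots,\theta_\ell)$ in $S$.} To prove it I would fix $H\in\CA$, choose linear coordinates $y_1,\dots,y_\ell$ with $y_1=\alpha_H$, and note that the $y_1$-coefficient of $\theta_i$ is $\theta_i(y_1)=\theta_i(\alpha_H)\in\alpha_H^{\mu(H)}S=y_1^{\mu(H)}S$; hence a whole column of the coefficient matrix in the $y$-coordinates is divisible by $y_1^{\mu(H)}$, so $\alpha_H^{\mu(H)}\mid\det M$. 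Since the $\alpha_H$ ($H\in\CA$) are pairwise non-associate primes of the UFD $S$, multiplying these divisibilities over $H\in\CA$ gives the claim.

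Granting the divisibility statement, (ii)$\Rightarrow$(i) is the standard Cramer's rule argument: if $\det M(\theta_1,\dots,\theta_\ell)\ \dot=\ Q(\CA,\mu)$, then $\det M\neq 0$, so $\theta_1,\dots,\theta_\ell$ are $S$-linearly independent; for arbitrary $\theta\in D(\CA,\mu)$ one writes $\theta=\sum_i c_i\theta_i$ over $\operatorname{Frac}(S)$ and Cramer's rule identifies $c_i$ with $\det M(\theta_1,\dots,\theta,\dots,\theta_\ell)\big/\det M(\theta_1,\dots,\theta_\ell)$ (with $\theta$ in slot $i$), whose numerator is divisible by $Q(\CA,\mu)$ by the divisibility statement applied to that tuple and whose denominator is $Q(\CA,\mu)$ up to a unit; hence $c_i\in S$, so the $\theta_i$ generate $D(\CA,\mu)$ and form a basis. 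In particular $(\CA,\mu)$ is free.

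For (i)$\Rightarrow$(ii) assume $\{\theta_1,\dots,\theta_\ell\}$ is a basis, so that $M:=M(\theta_1,\dots,\theta_\ell)$ represents the inclusion $D(\CA,\mu)\hookrightarrow\Der(S)$ relative to the bases $\{\theta_i\}$ and $\{D_j\}$. The divisibility statement gives $Q(\CA,\mu)\mid\det M$, and one must rule out surplus factors. Because $Q(\CA,\mu)\Der(S)\subseteq D(\CA,\mu)\subseteq\Der(S)$, the cokernel $\Der(S)/D(\CA,\mu)$ is torsion, killed by $Q(\CA,\mu)$, and $\det M$ generates its $0$-th Fitting ideal; hence the zero set of $\det M$ lies inside the union of the hyperplanes of $\CA$, so $\det M\ \dot=\ \prod_{H\in\CA}\alpha_H^{m_H}$ with $m_H\ge\mu(H)$. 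To see $m_H=\mu(H)$ I would localize at the height-one prime $(\alpha_H)$: there $\alpha_{H'}$ is a unit for each $H'\neq H$, so the conditions at the other hyperplanes drop out, and in coordinates with $y_1=\alpha_H$ one computes $D(\CA,\mu)_{(\alpha_H)}=y_1^{\mu(H)}S_{(\alpha_H)}\,\partial/\partial y_1\ \oplus\ \bigoplus_{j\ge 2}S_{(\alpha_H)}\,\partial/\partial y_j$; comparing this explicit $S_{(\alpha_H)}$-basis with $\{\theta_i\}$ (also a basis of the localization) shows that the order of $\det M$ along $H$ is exactly $\mu(H)$. Therefore $\det M\ \dot=\ Q(\CA,\mu)$.

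I expect this last step — the upper bound on $\det M$ in the direction (i)$\Rightarrow$(ii) — to be the only genuine difficulty; the divisibility statement and the Cramer argument are routine. An alternative to the local computation is a Hilbert series count: replacing $\{\theta_i\}$ by a homogeneous basis with $\pdeg\theta_i=d_i$, the exact sequence $0\to D(\CA,\mu)\to\Der(S)\to\Der(S)/D(\CA,\mu)\to 0$ together with the fact that the cokernel has pure codimension $1$ with multiplicity $\mu(H)$ along each hyperplane $H$ forces $\sum_i d_i=\sum_{H\in\CA}\mu(H)=\deg Q(\CA,\mu)$, which combined with the divisibility statement again yields $\det M\ \dot=\ Q(\CA,\mu)$.
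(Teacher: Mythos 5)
Your argument is correct and complete: the divisibility lemma via the coordinate change $y_1=\alpha_H$, the Cramer's-rule direction, and the localization at the height-one prime $(\alpha_H)$ to pin down the exact order of vanishing of $\det M$ along each hyperplane together give both implications. Note that the paper itself offers no proof of this statement --- it is quoted verbatim from Ziegler \cite[Thm.~8]{ziegler:multiarrangements} --- and what you have written is essentially Ziegler's original adaptation of Saito's criterion, so there is nothing to reconcile. The only cosmetic caveat is that the intermediate ``zero set of $\det M$ lies in $\bigcup_H H$'' step is best avoided over a non-algebraically-closed field; your localization computation already yields $\operatorname{ord}_{(p)}(\det M)=0$ for every irreducible $p$ not associate to some $\alpha_H$, which makes that step unnecessary.
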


Here the notation $\dot=$ indicates equality
up to a non-zero scalar multiple and 
$M(\theta_1, \dots, \theta_\ell)$ is the coefficient matrix of 
$\{\theta_1, \dots, \theta_\ell\}$, cf.~\cite[Def.~4.11]{orlikterao:arrangements}.

\subsection{}
We recall the construction from \cite{abeteraowakefield:euler} for the 
counterpart of Theorem \ref{thm:add-del-simple} in this more general setting.

\begin{defn}
\label{def:Euler}
Let $(\CA, \mu) \ne \Phi_\ell$ be a multiarrangement. Fix $H_0$ in $\CA$.
We define the \emph{deletion}  $(\CA', \mu')$ and \emph{restriction} $(\CA'', \mu^*)$
of $(\CA, \mu)$ with respect to $H_0$ as follows.
If $\mu(H_0) = 1$, then set $\CA' = \CA \setminus \{H_0\}$
and define $\mu'(H) = \mu(H)$ for all $H \in \CA'$.
If $\mu(H_0) > 1$, then set $\CA' = \CA$
and define $\mu'(H_0) = \mu(H_0)-1$ and
$\mu'(H) = \mu(H)$ for all $H \ne H_0$.

Let $\CA'' = \{ H \cap H_0 \mid H \in \CA \setminus \{H_0\}\ \}$.
The \emph{Euler multiplicity} $\mu^*$ of $\CA''$ is defined as follows.
Let $Y \in \CA''$. Since the localization $\CA_Y$ is of rank $2$, the
multiarrangement $(\CA_Y, \mu_Y)$ is free, 
\cite[Cor.~7]{ziegler:multiarrangements}. 
According to 
\cite[Prop.~2.1]{abeteraowakefield:euler},
the module of derivations 
$D(\CA_Y, \mu_Y)$ admits a particular homogeneous basis
$\{\theta_Y, \psi_Y, D_3, \ldots, D_\ell\}$,
such that $\theta_Y \notin \alpha_0 \Der(S)$
and $\psi_Y \in \alpha_0 \Der(S)$,
where $H_0 = \ker \alpha_0$.
Then on $Y$ the Euler multiplicity $\mu^*$ is defined
to be $\mu^*(Y) = \pdeg \theta_Y$.

Often, 
$(\CA, \mu), (\CA', \mu')$ and $(\CA'', \mu^*)$ 
is referred to as the \emph{triple} of 
$(\CA, \mu)$ with respect to $H_0$. 
\end{defn}

We require some core results from 
\cite{abeteraowakefield:euler}.  

\begin{theorem}
[{\cite[Thm.~0.4]{abeteraowakefield:euler}}]
\label{thm:restriction}
Suppose that $(\CA, \mu) \ne \Phi_\ell$.
Fix $H_0$ in $\CA$. If both 
$(\CA, \mu)$ and $(\CA', \mu')$ are free, 
then there exists a basis 
$\{\theta_1, \ldots, \theta_\ell \}$ of 
$D(\CA', \mu')$ such that 
$\{\theta_1, \ldots, \alpha_k \theta_k, \ldots, \theta_\ell \}$
is a basis of $D(\CA, \mu)$ 
for some $1 \le k \le \ell$.
\end{theorem}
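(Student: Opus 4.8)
\emph{Proof proposal.} The plan is to analyse the inclusion of graded $S$-modules $D(\CA,\mu)\subseteq D(\CA',\mu')$ and to pin down its cokernel exactly. Write $H_0=\ker\alpha_0$. First, $D(\CA,\mu)\subseteq D(\CA',\mu')$ in both cases of Definition~\ref{def:Euler}: when $\mu(H_0)=1$ the defining conditions of $D(\CA',\mu')$ are a subset of those of $D(\CA,\mu)$, and when $\mu(H_0)>1$ one merely relaxes the condition imposed at $H_0$. In either case $Q(\CA,\mu)=\alpha_0\,Q(\CA',\mu')$, and moreover $\alpha_0\,D(\CA',\mu')\subseteq D(\CA,\mu)$, by the one-line check that $(\alpha_0\theta)(\alpha_0)=\alpha_0\,\theta(\alpha_0)\in\alpha_0^{\mu(H_0)}S$ for $\theta\in D(\CA',\mu')$. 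Hence the cokernel $C:=D(\CA',\mu')/D(\CA,\mu)$ is a graded $S$-module annihilated by $\alpha_0$; it is nonzero, since otherwise Theorem~\ref{thm:ziegler-saito} would force $Q(\CA,\mu)\doteq Q(\CA',\mu')$. Put $\overline{S}:=S/\alpha_0 S$, a polynomial ring in $\ell-1$ variables, and view $C$ as a graded $\overline{S}$-module.

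The heart of the argument, and the step I expect to be the main obstacle, is to prove that $C$ is \emph{cyclic}, i.e. $C\cong\overline{S}(-m)$ for some integer $m$. I would derive this from two observations. On the one hand, $C$ embeds into $\overline{S}$: via $\theta\mapsto\overline{\theta(\alpha_0)}$ when $\mu(H_0)=1$, and via $\theta\mapsto\overline{g}$ where $g\in S$ is defined by $\theta(\alpha_0)=\alpha_0^{\mu(H_0)-1}g$ when $\mu(H_0)>1$; in both cases this is an $S$-linear map with kernel exactly $D(\CA,\mu)$, so $C$ is isomorphic to a homogeneous ideal of $\overline{S}$. On the other hand, since $(\CA,\mu)$ and $(\CA',\mu')$ are free, the short exact sequence $0\to D(\CA,\mu)\to D(\CA',\mu')\to C\to 0$ is a free $S$-resolution of $C$ of length $1$, so $\operatorname{pd}_S C\le 1$; by the Auslander--Buchsbaum formula $\operatorname{depth}_S C\ge\ell-1$, and as $C$ is a nonzero module over $\overline{S}$ with $\dim\overline{S}=\ell-1$, this forces $C$ to be maximal Cohen--Macaulay over the regular ring $\overline{S}$, hence \emph{free} over $\overline{S}$. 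A free $\overline{S}$-module that embeds into the rank-one free module $\overline{S}$ has rank at most $1$, and $C\ne 0$, so $C\cong\overline{S}(-m)$.

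It remains to convert this into the asserted bases. Choose a homogeneous $\theta_k\in D(\CA',\mu')$ mapping to a generator of $C$. Since its image in $C/\fm C\cong\BBK$ (with $\fm=S_{>0}$) is nonzero, $\theta_k\notin\fm\,D(\CA',\mu')$, so by graded Nakayama $\theta_k$ extends to a homogeneous $S$-basis $\theta_1,\dots,\theta_\ell$ of $D(\CA',\mu')$. For each $i\ne k$, use $C=\overline{S}\,\overline{\theta_k}$ to write $\overline{\theta_i}=\overline{f_i}\,\overline{\theta_k}$ with $f_i\in S$ homogeneous, and replace $\theta_i$ by $\theta_i-f_i\theta_k$; this is a unimodular (indeed unitriangular) change of basis of $D(\CA',\mu')$, after which $\theta_i\in D(\CA,\mu)$ for all $i\ne k$, while $\alpha_0\theta_k\in D(\CA,\mu)$ because $\alpha_0$ annihilates $C$. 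Finally, the coefficient matrix of $\{\theta_1,\dots,\theta_{k-1},\alpha_0\theta_k,\theta_{k+1},\dots,\theta_\ell\}$ has determinant $\alpha_0\det M(\theta_1,\dots,\theta_\ell)\doteq\alpha_0\,Q(\CA',\mu')=Q(\CA,\mu)$, so Theorem~\ref{thm:ziegler-saito} shows that these $\ell$ derivations form a basis of $D(\CA,\mu)$, which is precisely the assertion.
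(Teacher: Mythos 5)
Your proposal is correct. Note that the paper itself offers no proof of this statement: it is quoted verbatim from Abe--Terao--Wakefield \cite[Thm.~0.4]{abeteraowakefield:euler} (with $\alpha_k$ denoting $\alpha_{H_0}$, as the later use of the theorem in Lemma \ref{free_deletion_of_multiarrangement} confirms), so there is no internal argument to compare against. Your homological route is sound and self-contained: the inclusion $D(\CA,\mu)\subseteq D(\CA',\mu')$ and the containment $\alpha_0 D(\CA',\mu')\subseteq D(\CA,\mu)$ are verified correctly in both cases of Definition \ref{def:Euler}; the cokernel $C$ is nonzero by the degree count via Theorem \ref{thm:ziegler-saito}; the embedding $C\hookrightarrow \overline S$ via $\theta\mapsto\overline{\theta(\alpha_0)}$ (resp.\ $\theta\mapsto\overline g$ with $\theta(\alpha_0)=\alpha_0^{\mu(H_0)-1}g$) does have kernel exactly $D(\CA,\mu)$, since the conditions at all $H\ne H_0$ coincide for $\mu$ and $\mu'$; and the Auslander--Buchsbaum argument correctly forces $C$ to be free of rank one over $\overline S$, whence the basis manipulation (graded Nakayama to place $\theta_k$ in a basis, unitriangular corrections $\theta_i\mapsto\theta_i-f_i\theta_k$, and the final Saito--Ziegler determinant check) goes through. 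This differs in flavor from the original source, which works more directly with the transition matrix between bases of $D(\CA,\mu)$ and $D(\CA',\mu')$ and its reduction modulo $\alpha_0$ using $\det\doteq\alpha_0$; your version buys a conceptual explanation (the cokernel is maximal Cohen--Macaulay over $\overline S$, hence free cyclic) at the cost of invoking Auslander--Buchsbaum, while the matrix argument is more elementary. Either way the proof is complete as written.
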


\begin{defn}
		\label{def:Sbar}
		Fix $H_0 = \ker \alpha_0$ in $\CA$ and let $\CA''$ be the restriction with respect to $H_0$.
		Consider the projection $S \to \overline S  := S/\alpha_0 S$, 
		$f \mapsto \overline f$.
		There is 
		a canonical restriction map 
		\[
		\rho : D(\CA,\mu) \to D(\CA'', \mu^*), \quad \theta \mapsto \overline \theta
		\]  
		where $\overline \theta(\overline f) := \overline {\theta(f)}$, for 
		$f \in S$.	
		For $\mu \equiv \one$, we get $\mu^* \equiv \one$ (cf.~\cite[Rem.~03]{abeteraowakefield:euler}) and so $\rho$ specializes to the well known restriction map from \cite[Prop.~4.45]{orlikterao:arrangements}
		\begin{equation}
		\label{eq:rho}
		\rho : D(\CA) \to D(\CA'')
		\end{equation}
		 in case of simple arrangements.
\end{defn}

\begin{theorem}
[{\cite[Thm.~0.6]{abeteraowakefield:euler}}]
\label{thm:restriction2}
Suppose that $(\CA, \mu) \ne \Phi_\ell$.
Fix $H_0$ in $\CA$. 
Suppose that both 
$(\CA, \mu)$ and $(\CA', \mu')$ are free.
Let 
$\{\theta_1, \ldots, \theta_\ell \}$ be a basis of 
$D(\CA', \mu')$ 
as in Theorem \ref{thm:restriction}.
Then 
$\{\overline \theta_1, \ldots, \overline \theta_{k-1}, \overline \theta_{k+1}, \ldots, \overline \theta_\ell \}$
is a basis of $D(\CA'', \mu^*)$. 
\end{theorem}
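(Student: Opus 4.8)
The plan is to verify the two conditions of Ziegler's analogue of Saito's criterion (Theorem~\ref{thm:ziegler-saito}), now over the polynomial ring $\overline S=S/\alpha_0 S$ in $\ell-1$ variables, for the $(\ell-1)$-tuple $\overline\theta_1,\dots,\overline\theta_{k-1},\overline\theta_{k+1},\dots,\overline\theta_\ell$; here $\alpha_0$ is the defining form of $H_0$, i.e.\ the element written $\alpha_k$ in Theorem~\ref{thm:restriction} (a determinant comparison shows it must define $H_0$). Thus I must show: (a) $\overline\theta_i\in D(\CA'',\mu^*)$ for each $i\ne k$, and (b) the coefficient matrix $\overline M$ of $(\overline\theta_i)_{i\ne k}$ over $\overline S$ satisfies $\det\overline M\ \dot=\ Q(\CA'',\mu^*)$. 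Part (a) is immediate: by Theorem~\ref{thm:restriction} the tuple $\{\theta_1,\dots,\alpha_0\theta_k,\dots,\theta_\ell\}$ is a basis of $D(\CA,\mu)$, so $\theta_i\in D(\CA,\mu)$ whenever $i\ne k$, and the canonical restriction map $\rho$ of Definition~\ref{def:Sbar} carries $\theta_i$ to $\overline\theta_i\in D(\CA'',\mu^*)$.

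For (b) I would fix coordinates with $\alpha_0=x_\ell$, so $\overline S=\BBK[x_1,\dots,x_{\ell-1}]$, and let $M=(f_{ij})$ be the $\ell\times\ell$ coefficient matrix of $\theta_1,\dots,\theta_\ell$ over $S$. Since $\{\theta_i\}$ is a basis of $D(\CA',\mu')$, Theorem~\ref{thm:ziegler-saito} gives $\det M\ \dot=\ Q(\CA',\mu')=x_\ell^{\,\mu(H_0)-1}\prod_{H\ne H_0}\alpha_H^{\mu(H)}$. Now $f_{i\ell}=\theta_i(x_\ell)$ lies in $x_\ell^{\mu(H_0)}S$ for $i\ne k$ and in $x_\ell^{\mu(H_0)-1}S$ for $i=k$, so I may factor $x_\ell^{\mu(H_0)-1}$ out of the last column of $M$ and then reduce modulo $x_\ell$; the reduced last column vanishes outside its $k$-th entry, and a cofactor expansion along it, combined with the previous identity divided by $x_\ell^{\mu(H_0)-1}$, yields
\[
\overline{g_k}\cdot\det\overline M\ \dot=\ \prod_{H\ne H_0}\overline{\alpha_H}^{\,\mu(H)}\ \dot=\ \prod_{Y\in\CA''}\overline{\alpha_Y}^{\,m(Y)},\qquad m(Y):=\!\!\sum_{H\in\CA_Y\setminus\{H_0\}}\!\!\mu(H),
\]
where $g_k=f_{k\ell}/x_\ell^{\mu(H_0)-1}$ and $\overline{\alpha_Y}$ is a defining form of $Y$ in $\overline S$ (the distinct forms $\alpha_H$, $H\in\CA_Y\setminus\{H_0\}$, all become associates modulo $x_\ell$). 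In particular $\det\overline M\ne0$, so the $\overline\theta_i$ ($i\ne k$) are nonzero of polynomial degree $\pdeg\theta_i$, and $\det\overline M\ \dot=\ \prod_{Y\in\CA''}\overline{\alpha_Y}^{\,e(Y)}$ for some exponents $0\le e(Y)\le m(Y)$.

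The remaining, and genuinely delicate, step is to show $e(Y)=\mu^*(Y)$ for every $Y\in\CA''$. Fix $Y$ and localize $S$ at the prime $\fp_Y$ of $Y$: the basis $\{\theta_1,\dots,\alpha_0\theta_k,\dots,\theta_\ell\}$ of $D(\CA,\mu)$ is still a basis of $D(\CA,\mu)_{\fp_Y}=D(\CA_Y,\mu_Y)_{\fp_Y}$, which by \cite[Prop.~2.1]{abeteraowakefield:euler} also carries the distinguished basis $\{\theta_Y,\psi_Y,D_3,\dots,D_\ell\}$ with $\psi_Y\in\alpha_0\Der(S)$, $\theta_Y\notin\alpha_0\Der(S)$ and $\pdeg\theta_Y=\mu^*(Y)$. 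The key point is that $\overline\psi_Y=0$, since $\psi_Y$ is $\alpha_0$ times a derivation, whereas $\overline\theta_Y\ne0$. Rewriting the $\theta_i$ ($i\ne k$) in this distinguished basis, reducing modulo $\alpha_0$, and evaluating the resulting determinant then shows that $\overline{\alpha_Y}$ divides $\det\overline M$ to exact order $\mu^*(Y)$. Hence $\det\overline M\ \dot=\ \prod_{Y\in\CA''}\overline{\alpha_Y}^{\mu^*(Y)}=Q(\CA'',\mu^*)$ — consistently, $\deg\det\overline M=\sum_{i\ne k}\pdeg\theta_i$ then equals $\sum_Y\mu^*(Y)=|\mu^*|$ — and Theorem~\ref{thm:ziegler-saito} applied over $\overline S$ completes the proof. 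The main obstacle is precisely this local valuation computation at each $Y$, i.e.\ pinning down the exact contribution of $\theta_Y$ to the restricted determinant; the rest is formal manipulation of derivation modules together with the bookkeeping of exponents.
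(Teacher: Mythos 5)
This statement is not proved in the paper at all: it is quoted verbatim from Abe--Terao--Wakefield (\cite[Thm.~0.6]{abeteraowakefield:euler}), so there is no internal proof to compare yours against. Your strategy --- check that each $\overline\theta_i$ ($i\ne k$) lies in $D(\CA'',\mu^*)$ via the restriction map of Definition~\ref{def:Sbar}, and then verify Ziegler's Saito criterion over $\overline S$ by a determinant computation --- is the natural one and is essentially how ATW argue. Step (a) is fine (granting, as the paper does in Definition~\ref{def:Sbar}, that $\rho$ really lands in $D(\CA'',\mu^*)$, which is itself a nontrivial local result of ATW), and your identity $\overline{g_k}\cdot\det\overline M\ \dot=\ \prod_{H\ne H_0}\overline{\alpha_H}^{\mu(H)}$ is correct; in particular you correctly obtain $\det\overline M\ne 0$ and $\det\overline M\ \dot=\ \prod_Y\overline{\alpha_Y}^{e(Y)}$ with $e(Y)\le m(Y)$.

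The gap is in the final local step, and it is exactly where the real content of the theorem sits. Writing $\theta_i=\sum_m N_{im}\Xi_m$ with $\Xi=(\theta_Y,\psi_Y,D_3,\ldots,D_\ell)$ and using $\overline{\psi_Y}=0$, one gets $\overline M=\overline{N'}\cdot C$, where $C$ is the coefficient matrix of $(\overline{\theta_Y},\overline{D_3},\ldots,\overline{D_\ell})$ with $\det C\ \dot=\ \overline{\alpha_Y}^{\,\mu^*(Y)}$, and $\overline{N'}$ is the reduction modulo $\alpha_0$ of the minor of $N$ obtained by deleting row $k$ and the $\psi_Y$-column. This yields only the \emph{lower} bound $e(Y)\ge\mu^*(Y)$, i.e.\ $Q(\CA'',\mu^*)\mid\det\overline M$ --- no more than the easy half of Saito's criterion already gives once (a) is known. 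For the exact order you must show $\overline{\alpha_Y}\nmid\det\overline{N'}$, and nothing in your argument rules out such divisibility: the fact that $\det N$ is coprime to the forms of $\CA_Y$ controls the full $\ell\times\ell$ determinant, not this particular minor after deleting a row and a column and reducing modulo $\alpha_0$. Equivalently, what is missing is the single inequality $\pdeg(\alpha_0\theta_k)\ge|\mu|-|\mu^*|$ (the reverse one follows from your divisibility statement by comparing degrees); note you cannot import it from Theorem~\ref{thm:add-del}, since that theorem is deduced from the statement you are proving. Establishing this upper bound --- via a finer analysis of the image and kernel of $\rho$, as in ATW --- is the substantive part of the proof and is not supplied by the sentence ``evaluating the resulting determinant then shows \dots''.
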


Here is the counterpart of Theorem \ref{thm:add-del-simple} for multiarrangements.

\begin{theorem}
[{\cite[Thm.~0.8]{abeteraowakefield:euler}}
Addition Deletion Theorem for Multiarrangements]
\label{thm:add-del}
Suppose that $(\CA, \mu) \ne \Phi_\ell$.
Fix $H_0$ in $\CA$ and 
let  $(\CA, \mu), (\CA', \mu')$ and  $(\CA'', \mu^*)$ be the triple with respect to $H_0$. 
Then any  two of the following statements imply the third:
\begin{itemize}
\item[(i)] $(\CA, \mu)$ is free with $\exp (\CA, \mu) = \{ b_1, \ldots , b_{\ell -1}, b_\ell\}$;
\item[(ii)] $(\CA', \mu')$ is free with $\exp (\CA', \mu') = \{ b_1, \ldots , b_{\ell -1}, b_\ell-1\}$;
\item[(iii)] $(\CA'', \mu^*)$ is free with $\exp (\CA'', \mu^*) = \{ b_1, \ldots , b_{\ell -1}\}$.
\end{itemize}
\end{theorem}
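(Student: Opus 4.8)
\medskip
\noindent\textbf{Proof proposal.}
The plan is to mimic Terao's proof of the simple Addition--Deletion Theorem~\ref{thm:add-del-simple}, substituting for each ingredient its multiarrangement counterpart: Ziegler's criterion (Theorem~\ref{thm:ziegler-saito}) for Saito's criterion, Theorems~\ref{thm:restriction} and \ref{thm:restriction2} for the restriction lemma, and the Euler multiplicity $\mu^*$ of Definition~\ref{def:Euler} for the tautological restriction multiplicity. I would first record the numerical consequences of Theorem~\ref{thm:ziegler-saito}: if $(\CA,\mu)$ is free then $\det M(\theta_1,\dots,\theta_\ell)\ \dot=\ Q(\CA,\mu)$ for any homogeneous basis, so $\sum\exp(\CA,\mu)=|\mu|$, and likewise $\sum\exp(\CA',\mu')=|\mu'|=|\mu|-1$, $\sum\exp(\CA'',\mu^*)=|\mu^*|$; moreover $Q(\CA,\mu)$ divides $\det M(\theta_1,\dots,\theta_\ell)$ whenever $\theta_1,\dots,\theta_\ell\in D(\CA,\mu)$ are $S$-independent. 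Combined, this yields the working criterion: $(\CA,\mu)$ is free with $\exp(\CA,\mu)=\{d_1,\dots,d_\ell\}$ exactly when $D(\CA,\mu)$ contains $S$-independent homogeneous $\theta_i$ with $\pdeg\theta_i=d_i$ and $\sum_i d_i=|\mu|$.

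The implication (i) and (ii) $\Rightarrow$ (iii) is then short. With $(\CA,\mu)$ and $(\CA',\mu')$ free, Theorem~\ref{thm:restriction} provides a homogeneous basis $\{\theta_1,\dots,\theta_\ell\}$ of $D(\CA',\mu')$ for which, after relabelling, $\{\theta_1,\dots,\theta_{\ell-1},\alpha_0\theta_\ell\}$ is a basis of $D(\CA,\mu)$; reading off exponents gives $\pdeg\theta_i=b_i$ for $i<\ell$ and $\pdeg\theta_\ell=b_\ell-1$. By Theorem~\ref{thm:restriction2}, $\{\overline\theta_1,\dots,\overline\theta_{\ell-1}\}$ is a basis of $D(\CA'',\mu^*)$, so each $\overline\theta_i\neq0$. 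Taking coordinates with $\alpha_0=x_\ell$ and writing $\theta_i=\sum_j g_{ij}D_j$ with $g_{ij}$ homogeneous of degree $b_i$, the reduction $\overline\theta_i$ has $\pdeg\overline\theta_i=b_i$ unless every $g_{ij}$ is divisible by $\alpha_0$, i.e.\ unless $\theta_i\in\alpha_0\Der(S)$; this is ruled out by $\overline\theta_i\neq0$, so $\exp(\CA'',\mu^*)=\{b_1,\dots,b_{\ell-1}\}$.

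For the two remaining implications I would use the restriction map $\rho\colon D(\CA,\mu)\to D(\CA'',\mu^*)$ of Definition~\ref{def:Sbar}, together with the inclusion $D(\CA,\mu)\subseteq D(\CA',\mu')$. A direct check of the defining conditions shows $\ker\rho=\alpha_0 D(\CA',\mu')$, giving the exact sequence
\[
0\longrightarrow\alpha_0 D(\CA',\mu')\longrightarrow D(\CA,\mu)\ \stackrel{\rho}{\longrightarrow}\ D(\CA'',\mu^*).
\]
If $\rho$ is surjective, the sequence is short exact; since $D(\CA'',\mu^*)$ is free over $\overline S=S/\alpha_0 S$ it has projective dimension $1$ over $S$, so if $D(\CA,\mu)$ is free then $\alpha_0 D(\CA',\mu')$ --- a first syzygy of $D(\CA'',\mu^*)$ inside a free module --- is itself free, which gives (i) and (iii) $\Rightarrow$ (ii), the exponents then being forced by the Hilbert series of the short exact sequence. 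The addition direction (ii) and (iii) $\Rightarrow$ (i) is the genuinely hard one: I would build a basis of $D(\CA,\mu)$ directly, by lifting a homogeneous basis of $D(\CA'',\mu^*)$ along the graded surjection $\rho$ and adjoining $\alpha_0$ times the distinguished degree-$(b_\ell-1)$ generator of $D(\CA',\mu')$, and then verify the Ziegler--Saito determinant condition by a Laplace expansion along $\alpha_0$ --- which reduces to the invertibility of an $(\ell-1)\times(\ell-1)$ matrix over $\overline S$ that, once identified as a change of basis between homogeneous bases of $D(\CA'',\mu^*)$ with the same degrees $\{b_1,\dots,b_{\ell-1}\}$, has determinant of degree $0$, hence a unit.

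I expect the main obstacle to be the surjectivity of $\rho$, equivalently the vanishing of $\operatorname{coker}\rho$. Here I would argue locally: for $Y\in\CA''$ the localization $(\CA_Y,\mu_Y)$ has rank two, hence is free by \cite[Cor.~7]{ziegler:multiarrangements}, and $\mu^*(Y)=\pdeg\theta_Y$ was defined in Definition~\ref{def:Euler} precisely so that the distinguished generator $\theta_Y\notin\alpha_0\Der(S)$ reduces to a generator of the free derivation module of the rank-one restriction at $Y$; consequently $\operatorname{coker}\rho$ vanishes after localizing at every height-one prime of $\overline S$, so is supported in codimension $\ge2$ in $\overline S$. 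As a reflexive $\overline S$-module is recovered from its localizations in codimension $\le1$, this upgrades to $\operatorname{coker}\rho=0$ once $\operatorname{im}\rho$ is known to be reflexive --- which is where the relevant freeness hypothesis (on $D(\CA,\mu)$, resp.\ on $D(\CA',\mu')$) enters. A secondary subtlety, needed to make the determinant step in the addition direction precise, is the comparison between $\mu^*$ and the Euler multiplicity of the restriction of $\CA'$, the two differing by at most $1$ at each flat; past these points everything is bookkeeping with polynomial degrees and Hilbert series.
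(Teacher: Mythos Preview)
The paper does not prove Theorem~\ref{thm:add-del}: it is quoted verbatim from \cite[Thm.~0.8]{abeteraowakefield:euler} and used as a black box, so there is no ``paper's own proof'' to compare against. Your sketch is, in outline, the Abe--Terao--Wakefield argument itself: the exact sequence $0\to\alpha_0 D(\CA',\mu')\to D(\CA,\mu)\xrightarrow{\rho}D(\CA'',\mu^*)$, surjectivity of $\rho$ via local analysis in rank~$2$ (which is exactly why $\mu^*$ is defined as it is), and then Ziegler--Saito bookkeeping; and your use of Theorems~\ref{thm:restriction} and~\ref{thm:restriction2} for (i)$+$(ii)$\Rightarrow$(iii) is legitimate, since in \cite{abeteraowakefield:euler} those are established prior to and independently of the Addition--Deletion theorem.

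Two points deserve care if you want to turn this into a complete proof. First, your reflexivity step for surjectivity is slightly off as stated: it is $D(\CA'',\mu^*)$ that is reflexive over $\overline S$ (being an intersection of free modules, regardless of any freeness hypothesis), and the argument is that $\operatorname{im}\rho$ and $D(\CA'',\mu^*)$ agree in codimension $\le 1$, hence have the same reflexive hull; one then needs $\operatorname{im}\rho$ itself reflexive, which follows once the kernel $\alpha_0 D(\CA',\mu')$ is free (case (ii)) or $D(\CA,\mu)$ is free (case (i)), since a second syzygy over a regular ring is reflexive. Second, in the addition direction you cannot simply ``lift a basis along the graded surjection $\rho$'' before you know $\rho$ is surjective, and you do not yet know $D(\CA,\mu)$ is free; the actual construction in \cite{abeteraowakefield:euler} first lifts from $D(\CA',\mu')$ and then corrects by multiples of the distinguished generator, with the determinant verified via the rank-$2$ local data --- your last paragraph gestures at this, but the order of operations matters.
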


\subsection{}

In our following rather general result, we consider an arbitrary chain of free subarrangements of a free arrangement. For a restriction to a hyperplane stemming from the smallest of these free subarrangements  
we then investigate the resulting chain of multiarrangements where we endow each restriction with the canonical multiplicity. 
It turns out that 
the Euler multiplicity at each step for the associated restricted multiarrangement is just itself again another Ziegler multiplicity. This result turns out to be extremely useful in our sequel in determining inductively free Ziegler restrictions.

\begin{lemma}
	\label{lemma:free_sequence_to_ziegler_multiplicity}
	Let $\CA$ be an $\ell$-arrangement and let $H_1$,\ldots,$H_n$ be distinct hyperplanes in $\CA$.
	Define $\CA_0 := \CA \setminus \{H_1,\ldots,H_n\}$, and $\CA_i := \CA_{i-1} \cup \{H_i\}$ for $i = 1, \ldots, n$. 
	Suppose that $\CA_0,\ldots,\CA_n = \CA$ are all free.  Then for a fixed $H \in \CA_0$, we have the following:
	\begin{enumerate}
		\item 
		$(\CA_i^H,\kappa_i)$ is free for $i=0,\ldots,n$, where $\kappa_i$ is the Ziegler multiplicity of $\CA_i^H$.
		
		\item Consider the triple $(\CA_{i}^H,\kappa_i)$, $(\CA_{i-1}^H,\kappa_{i-1})$,  and $((\CA_i^H)^{H \cap H_i}, \kappa_i^*)$. Then the Euler multiplicity  $\kappa_i^*$ is just the 
		Ziegler multiplicity $\kappa$ of the restriction of $\CA_i^{H_i}$ to $H \cap H_i$. 
	\end{enumerate}
\end{lemma}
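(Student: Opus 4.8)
The plan is to prove part (1) first by a downward induction, then derive part (2) as essentially a bookkeeping computation about what the Euler multiplicity measures on a rank-$2$ localization.

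For part (1), first note that $(\CA_n^H, \kappa_n) = (\CA^H, \kappa)$ is free by Ziegler's Theorem~\ref{thm:zieglermulti}, since $\CA_n = \CA$ is free. Now I would induct downward on $i$: suppose $(\CA_i^H, \kappa_i)$ is free and show $(\CA_{i-1}^H, \kappa_{i-1})$ is free. The key observation is that $\CA_{i-1} = \CA_i \setminus \{H_i\}$, and both $\CA_i$ and $\CA_{i-1}$ are free by hypothesis; by Terao's Addition--Deletion Theorem~\ref{thm:add-del-simple} applied to the triple $(\CA_i, \CA_{i-1}, \CA_i^{H_i})$, the restriction $\CA_i^{H_i}$ is also free and $\exp \CA_i^{H_i} \subset \exp \CA_{i-1}$. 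Since $H \in \CA_0 \subseteq \CA_{i-1}$, passing from $\CA_i^H$ to $\CA_{i-1}^H$ amounts to deleting the single hyperplane $H \cap H_i$ from $\CA_i^H$ (if $H \cap H_i \notin \CA_{i-1}^H$, i.e.\ the Ziegler multiplicity $\kappa_i$ of that flat drops to zero and the flat disappears) or, more precisely, to decreasing the Ziegler multiplicity $\kappa_i(H \cap H_i)$ by one. In other words, $(\CA_{i-1}^H, \kappa_{i-1})$ is exactly the \emph{deletion} of $(\CA_i^H, \kappa_i)$ at the hyperplane $Y_i := H \cap H_i$ in the sense of Definition~\ref{def:Euler}. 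Then I would apply the Addition--Deletion Theorem for multiarrangements~\ref{thm:add-del}: we know $(\CA_i^H, \kappa_i)$ is free, so it suffices to know the \emph{restriction} $((\CA_i^H)^{Y_i}, \kappa_i^*)$ is free with the right exponents — but that restriction is a multiarrangement on a space of dimension $\ell - 2$, and one should set up the induction so that freeness of $((\CA_i^H)^{Y_i}, \kappa_i^*)$ follows either from an induction on $\ell$ or, better, is handled simultaneously with part (2).

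For part (2), the claim is that the Euler multiplicity $\kappa_i^*$ on the flat $Z := H \cap H_i$ (a codimension-$2$ subspace of $V$, viewed inside $H$) equals the Ziegler multiplicity of the restriction of $\CA_i^{H_i}$ to $Z$. Here I would unwind the definitions. The Euler multiplicity $\kappa_i^*(Z)$ is computed from the rank-$2$ localization $(( \CA_i^H)_Z, (\kappa_i)_Z)$: one writes down the special basis $\{\theta_Z, \psi_Z, D_3, \dots\}$ of Definition~\ref{def:Euler} with $\psi_Z \in \alpha_{Y_i}\Der(S)$ and reads off $\kappa_i^*(Z) = \pdeg \theta_Z$. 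On the other side, the Ziegler multiplicity $\kappa$ of the restriction of $\CA_i^{H_i}$ to $Z$ counts the hyperplanes of $\CA_i^{H_i}$ (other than the distinguished one $H \cap H_i$) lying above $Z$; equivalently, it counts the hyperplanes of $\CA_i$ that contain $Z$ but are not equal to $H_i$ and do not equal $H$ — that is, $|(\CA_i)_Z| - 2$. The substance is to match $\pdeg \theta_Z$ with this count. The cleanest route is to invoke the rank-two structure directly: on a rank-$2$ multiarrangement $(\CB, \nu)$ with $\CB = \{K_1, \dots, K_m\}$, the exponents are $\{\lfloor |\nu|/2 \rfloor, \lceil |\nu|/2 \rceil\}$ by Wakefield--Yuzvinsky (or simply solve directly), but here what matters is not $(\CA_i^H)_Z$ abstractly but the refined statement identifying $\pdeg\theta_Z$ after one splits off the $\alpha_{Y_i}$-divisible part; this is precisely the computation carried out in \cite[\S3]{hogeroehrle:Ziegler} for the canonical multiplicity, and I would cite and adapt it. Concretely: the localization $(\CA_i)_Z$ has $|(\CA_i)_Z|$ hyperplanes through $Z$, among them $H$ and $H_i$; the Ziegler multiplicity $\kappa_i$ on $Y_j := H \cap H_j$ (for $H_j \neq H$ above $Z$) records $|(\CA_i)_{Y_j}| - 1$, and summing over the flats of $(\CA_i^H)_Z$ and tracking which part is divisible by $\alpha_{Y_i}$ gives exactly $\pdeg\theta_Z = |(\CA_i)_Z| - 2 = \kappa(Z)$ for the restriction of $\CA_i^{H_i}$ to $Z$.

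The main obstacle, I expect, is part (2): the matching of $\pdeg\theta_Z$ with the combinatorial count is not formal, because the Euler multiplicity is defined via an algebraic splitting of a derivation module and is genuinely sensitive to more than the naive hyperplane count in general — what saves us here is precisely that the multiplicity in question is the \emph{canonical} one $\kappa_i$, coming from a simple ambient arrangement, so that its restriction localizes nicely. I would therefore structure the argument so that parts (1) and (2) are proved together by induction on $\ell$: the inductive hypothesis supplies freeness of the $(\ell-2)$-dimensional restricted multiarrangements needed to run Theorem~\ref{thm:add-del} in part (1), while the rank-$2$ localization computation — which is the base of the recursion and the real content — establishes part (2) and feeds back the exponent information required to apply Addition--Deletion. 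A secondary point to handle carefully is the degenerate case where $H \cap H_i$ does not actually define a hyperplane in $\CA_{i-1}^H$ (because no other hyperplane of $\CA_{i-1}$ passes through it), so that $\kappa_i(H\cap H_i) = 1$ and the flat is removed entirely in passing to $\CA_{i-1}^H$; this is still covered by the multiarrangement deletion of Definition~\ref{def:Euler}, but the exponent bookkeeping must be checked in that case too.
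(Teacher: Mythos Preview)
Your proposal has two substantive problems.

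\textbf{Part (1).} You overcomplicate this badly. The hypothesis says that \emph{each} $\CA_i$ is free. Ziegler's Theorem~\ref{thm:zieglermulti} therefore applies directly to each $\CA_i$ with respect to the fixed $H \in \CA_0 \subseteq \CA_i$, giving freeness of $(\CA_i^H,\kappa_i)$ immediately. No downward induction, no Addition--Deletion for multiarrangements, no simultaneous induction on $\ell$ is needed. The paper's proof of (1) is one line.

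\textbf{Part (2).} Your argument is muddled because you conflate two different objects. You set $Z := H \cap H_i$; this is the hyperplane in $\CA_i^H$ to which we restrict. The Euler multiplicity $\kappa_i^*$ is a function on the hyperplanes $X$ of the restriction $(\CA_i^H)^{Z}$, and is computed from the rank-$2$ localization $((\CA_i^H)_X,(\kappa_i)_X)$ for each such $X$. You instead write ``$\kappa_i^*(Z)$'' and localize at $Z$ itself, which yields a rank-$1$ object, not rank-$2$. Similarly, your count $|(\CA_i)_Z|-2$ is a single number attached to $Z$, whereas the Ziegler multiplicity $\kappa$ on $(\CA_i^{H_i})^{H\cap H_i}$ assigns to each $X$ the value $|(\CA_i^{H_i})_X|-1$; these are not the same quantity. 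Your aside that rank-$2$ exponents are $\{\lfloor|\nu|/2\rfloor,\lceil|\nu|/2\rceil\}$ is also false in general.

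The paper's route for (2) is different and avoids any explicit derivation computation. Fix $X \in (\CA_i^H)^{H\cap H_i}$ and write $\exp((\CA_i^H)_X,(\kappa_i)_X)=\{a,b\}$ so that $\kappa_i^*(X)=a$. Since $(\CA_i)_X$ is free (as a localization of a free arrangement) and $((\CA_i)_X)^H$ carries precisely the localized Ziegler multiplicity $(\kappa_i)_X$, Ziegler's theorem forces $\exp((\CA_i)_X)=\{1,a,b\}$; likewise $\exp((\CA_{i-1})_X)=\{1,a,b-1\}$. Now Terao's Addition--Deletion gives $\exp((\CA_i)_X^{H_i})=\{1,a\}$, and one more application of Ziegler to the restriction $((\CA_i)_X^{H_i})^{H\cap H_i}=\{X\}$ yields $\kappa(X)=a$. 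The key idea you are missing is to lift from the rank-$2$ multiarrangement back up to the rank-$3$ simple localization $(\CA_i)_X$ via Ziegler, apply Addition--Deletion there, and descend again.
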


\begin{proof}
	(1).  This is immedidate from Theorem \ref{thm:zieglermulti}.
	
	(2). Fix $i \in \{1,\ldots,n\}$. Note that $(\CA_i^H)^{H \cap H_i} = (\CA_i^{H_i})^{H \cap H_i}$.  
	Let $\kappa$ be the Ziegler multiplicity of the restriction of $\CA_i^{H_i}$ with respect to $H \cap H_i$.
	We need to show that $\kappa_i^{\ast} = \kappa$.
	Therefore, for a fixed $X \in (\CA_i^H)^{H \cap H_i}$, there are $a,b \in \mathbb{N}$ such that 
	$$\exp \left((\CA_i^H)_X,(\kappa_i)_X\right) = \{a,b,0,\ldots,0\}$$ and 
	$$\exp \left((\CA_{i-1}^H)_X,(\kappa_{i-1})_X\right) = \{a,b-1,0,\ldots,0\}.$$
	By Definition \ref{def:Euler}, we have $\kappa_i^{\ast}(X) = a$. Since $\CA_i$ is free, so is $(\CA_i)_X$, and since $(\CA_i^H)_X =   ((\CA_i)_X)^H $, it follows from Theorem \ref{thm:zieglermulti} that  
	$$\exp((\CA_i)_X) = \{1,a,b,0,\ldots,0\}$$ and with the same argument for $\CA_{i-1}$ it follows that $(\CA_{i-1})_X$ is free with
	$$\exp((\CA_{i-1})_X) = \{1,a,b-1,0,\ldots,0\}.$$  
	Hence, by Theorem \ref{thm:add-del-simple}, $(\CA_i)_X^{H_i}$ is free with $\exp\left((\CA_i)_X^{H_i}\right) = \{1,a,0,\ldots,0\}$ and so thanks to Theorem \ref{thm:zieglermulti}, its multi-restriction to $H\cap H_i$ is free with 
	$\exp\left(\left((\CA_i)_X^{H_i}\right)^{H\cap H_i},\kappa_X\right) = \{a,0,\ldots,0\}$ where $\kappa_X$ is the Ziegler multiplicity. Thus, $\kappa_X(X) = a$,	since $\left((\CA_i)_X^{H_i}\right)^{H\cap H_i} = \{X\}$. Consequently,  $\kappa(X) = \kappa_X(X) = a$, as desired.
\end{proof}

\begin{remarks}
	\label{rem:free_sequence_to_ziegler_multiplicity}
	(i). In general the Euler multiplicity in connection with the restriction of a multiarrangement is not combinatorial, cf.~\cite[Ex.~4.2]{abeteraowakefield:euler}. In contrast, the canonical multiplicity $\kappa$ in a restriction of a simple arrangement is combinatorial, by definition. It just depends on the intersection lattice of the underlying arrangement. Thus it is worth pointing out that in Lemma \ref{lemma:free_sequence_to_ziegler_multiplicity}(2), the Euler multiplicity $\kappa_i^*$  here \emph{is} always combinatorial.
	
	(ii). Our proof of Lemma \ref{lemma:free_sequence_to_ziegler_multiplicity}(2) above does use the freeness of the subarrangements $\CA_i$. The following example illustrates
	that the latter is essential.	
\end{remarks}

\begin{example}
	\label{ex:zigler-vsEuler}
	Let $\CA$ be the $\BBQ$-arrangement in $\BBQ^4$ given by the defining polynomial
	\begin{equation*}
	\begin{split}
	Q(\CA) = uxyz(u+x)(u+y)(u+z)(u+x+y)(u+x+z)(y+z) \\ (u+x+y+z)(2u+x+y)(3u+x+y+z)(u+2x).
	\end{split}
	\end{equation*}
	Let $H_u := \ker u$ and $H_x := \ker x$.
	The Ziegler restriction of $\CA$ on $H_u$ is given by
	$$Q(\CA^{H_u},\kappa) = x^3y^2z^2(x+y)^2(x+z)(y+z)(x+y+z)^2$$
	and the Euler multiplicity of $(\CA^{H_u},\kappa)$ with respect to  
	to $H_u \cap H_x$ is given by 
	$$Q((\CA^{H_u})^{H_u \cap H_x},\kappa^\ast) = y^3z^3(y+z)^3,$$
	thanks to \cite[Prop.~4.1(7)]{abeteraowakefield:euler}.
	On the other hand we have
	$$Q(\CA^{H_x}) = uyz(u+y)(u+z)(y+z)(u+y+z)(2u+y)(3u+y+z)$$ 
	and here the Ziegler restriction $\kappa'$ of $\CA^{H_x}$ to $H_x \cap H_u$ is given by
	$$Q((\CA^{H_x})^{H_x\cap H_u},\kappa') = y^3z^2(y+z)^3.$$
	So in contrast to Lemma \ref{lemma:free_sequence_to_ziegler_multiplicity}(2) here the two multiplicities do not coincide.
	In this example the arrangements $\CA \setminus \{H_x\}$ and $\CA$ are not free.
	Hence the assumption on the freeness of the $\CA_i$ in 
	Lemma \ref{lemma:free_sequence_to_ziegler_multiplicity} cannot be removed.
\end{example}

\subsection{}
\label{ssec:inductive}
As in the simple case, Theorem \ref{thm:add-del} motivates 
the notion of inductive freeness. 

\begin{defn}[{\cite[Def.~0.9]{abeteraowakefield:euler}}]
	\label{def:indfree}
	The class $\CIFM$ of \emph{inductively free} multiarrangements 
	is the smallest class of multiarrangements subject to
	\begin{itemize}
		\item[(i)] $\Phi_\ell \in \CIFM$ for each $\ell \ge 0$;
		\item[(ii)] for a multiarrangement $(\CA, \mu)$, if there exists a hyperplane $H_0 \in \CA$ such that both
		$(\CA', \mu')$ and $(\CA'', \mu^*)$ belong to $\CIFM$, and $\exp (\CA'', \mu^*) \subseteq \exp (\CA', \mu')$, 
		then $(\CA, \mu)$ also belongs to $\CIFM$.
	\end{itemize}
\end{defn}

\begin{remark}
	\label{rem:rank2indfree}
	As for simple arrangements, if $r(\CA) \le 2$,
	then $(\CA, \mu)$  is inductively free,  
	\cite[Cor.~7]{ziegler:multiarrangements}.
\end{remark}

\subsection{}
\label{ssec:recursive}

Theorem \ref{thm:add-del} also motivates the concept of recursive freeness.

\begin{defn}[{\cite[Def.~2.21]{hogeroehrleschauenburg:free}}]
	\label{def:recfree}
	The class $\CRFM$ of \emph{recursively free} multiarrangements 
	is the smallest class of multiarrangements subject to
	\begin{itemize}
		\item[(i)] $\Phi_\ell \in \CRFM$ for each $\ell \ge 0$;
		\item[(ii)] for a multiarrangement $(\CA, \mu)$, if there exists a hyperplane $H_0 \in \CA$ such that both
		$(\CA', \mu')$ and $(\CA'', \mu^*)$ belong to $\CRFM$, and $\exp (\CA'', \mu^*) \subseteq \exp (\CA', \mu')$, 
		then $(\CA, \mu)$ also belongs to $\CRFM$;
		\item[(iii)] for a multiarrangement $(\CA, \mu)$, if there exists a hyperplane $H_0 \in \CA$ such that both
		$(\CA, \mu)$ and $(\CA'', \mu^*)$ belong to $\CRFM$, and $\exp (\CA'', \mu^*) \subseteq \exp (\CA, \mu)$, 
		then $(\CA', \mu')$ also belongs to $\CRFM$.
	\end{itemize}
\end{defn}

In the special instance in Definition \ref{def:recfree} when $\mu \equiv \one$,  
we recover the notion of recursive freeness from \cite[Def.~4.60]{orlikterao:arrangements} for simple arrangements.
Clearly, if $\CA$ is inductively free, it is recursively free, and likewise for multiarrangements. 

\begin{remark}
	\label{rem:recchain}
	Suppose that $(\CA, \nu) \in \CRFM$. 
	It follows from Definition \ref{def:recfree} that there exists a chain of 
	recursively free multiarrangements, starting with the 
	empty arrangement 
	\[
	\Phi_\ell \subseteq (\CA_1, \nu_1) \subseteq (\CA_2, \nu_2) \ldots (\CA_n, \nu_n) = (\CA, \nu) 
	\]
	such that each consecutive pair obeys Definition \ref{def:recfree}. 
	In particular, $|\nu_i| = |\nu_{i-1}| \pm 1$ for each $1 \le i \le n$ and 
	$|\nu| = n$.
	We refer to a sequence as above as 
	a \emph{recursive chain} of $(\CA, \nu)$. 
\end{remark}

\section{Filtrations of Free Multiplicities}
\label{sec:filtrations}
\subsection{}
Several of the results in this section are taken from \cite[\S3]{hogeroehrle:Ziegler}.
We begin with recalling a natural partial order on the 
set of multiplicities for a simple arrangement.

\begin{defn}
	\label{def:order}
	For multiplicities $\mu_1$ and $\mu_2$ on a fixed arrangement $\CA$, define 
	$\mu_1 \le \mu_2$ provided $\mu_1(H) \le \mu_2(H)$ for every $H$ in $\CA$.
\end{defn}

\begin{defn}
\label{def:freefiltration}
Let $\CA$ be a free arrangement. 
Suppose there is a free multiplicity $\mu > \one$ on $\CA$ 
such that 
there is a sequence of free multiplicities $\mu_i$ on $\CA$ satisfying
$\mu_i < \mu_{i+1}$ and $|\mu_{i+1}| = |\mu_i| +1$,
for $i = 1, \ldots, n-1$, where $\mu_1 := {\one}$ and $\mu_n := \mu$.
Then we say that the sequence of multiarrangements 
$(\CA, \mu_i)$ is a \emph{free filtration} of $(\CA, \mu)$ or
simply that the sequence 
$\mu_i$ is a \emph{filtration} of free multiplicities on $\CA$.
\end{defn}

It is natural to consider the 
generalization of Definition \ref{def:addfree} for 
multiarrangements.

\begin{defn}
	\label{def:multaddfree}
	The multiarrangement $(\CA, \mu)$ is said to be  
	\emph{additively free} if there is a free filtration of multiplicities $\mu_i$ on $\CA$,
	\[
	\mu_0 < \mu_1 < \cdots < \mu_n = \mu,
	\]
	i.e., where each $(\CA, \mu_i)$ is free with $|\mu_i| = i$.
	Denote this class by $\CAFM$. In particular, $(\CA, \mu_0) = \Phi_\ell$. 
	For $\mu \equiv \one$, this 
	specializes to  Definition \ref{def:addfree}.
\end{defn}

\begin{remarks}
	\label{rem:multaddfree}
	(i). It is obvious from the definitions that if $(\CA, \mu)$ is inductively free, it is 
	additively free. The converse is false already for simple arrangements, see \cite{hogeroehrle:stairfree}.
	
	(ii). If $(\CA, \mu)$ is additively free, it need not be the case that $\CA$ itself is additively free, see Example \ref{ex:indfree-recfree}.
\end{remarks}

For an arrangement $\CA$, we choose $\alpha_H \in S$ so that $H = \ker \alpha_H$,  for each $H \in \CA$.
For a multiplicity $\mu$ on $\CA$, we associate the following canonical derivation 
in $D(\CA,\mu)$,
\begin{equation}
\label{eq:thetamu}
\theta_\mu := \left(\prod_{H \in \CA} \alpha_H^{\mu(H)-1}\right) \theta_E,
\end{equation}
where 
$\theta_E$ is the Euler derivation in $\Der(S)$, 
\cite[Def.~4.7]{orlikterao:arrangements}. 
We consider the case when $\theta_\mu$ 
belongs to a basis of $D(\CA,\mu)$.

\begin{lemma}
	\label{lem:thetamu}
	Suppose $(\CA,\mu)$ is free and $\theta_\mu$
	is part of a basis of $D(\CA,\mu)$. Then $(\CA,\nu)$ is free for every multiplicity $\nu$ with $\one \le \nu \le \mu$. Moreover, if $\exp (\CA) = \{1,e_2,\ldots,e_\ell\}$, then  $\exp (\CA,\nu) = \{1+ \vert \nu\vert - \vert \CA \vert,e_2,\ldots,e_\ell\}$ for each such $\nu$.
\end{lemma}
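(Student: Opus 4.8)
The plan is to proceed by downward induction on $|\mu| - |\nu|$, peeling off one unit of multiplicity at a time and using the Addition-Deletion Theorem for multiarrangements (Theorem \ref{thm:add-del}) at each step, with the derivation $\theta_\mu$ — more precisely, its analogues $\theta_\nu$ — serving as the common basis element that witnesses the hypotheses. The base case $\nu = \mu$ is the assumption. For the inductive step, suppose $\one \le \nu < \mu$ and that $(\CA, \nu')$ is free for every $\nu'$ with $\nu < \nu' \le \mu$, where $\theta_{\nu'}$ is part of a homogeneous basis of $D(\CA,\nu')$; pick $H_0 \in \CA$ with $\nu(H_0) < \mu(H_0)$, so that $\nu$ is obtained from the multiplicity $\nu^+ := \nu + \mathbf{1}_{H_0}$ by decreasing the multiplicity at $H_0$ by one. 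Then $(\CA, \nu^+)$ and $(\CA, \nu)$ form a deletion pair in the sense of Definition \ref{def:Euler}: with the roles matched up, $(\CA, \nu^+)$ plays the role of $(\CA, \mu)$ and $(\CA, \nu)$ plays the role of $(\CA', \mu')$.

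First I would record the key structural fact about $\theta_{\nu^+}$. Writing $\alpha_0 = \alpha_{H_0}$, one has $\theta_{\nu^+} = \alpha_0 \cdot \theta_\nu$ directly from the defining formula \eqref{eq:thetamu}. By the inductive hypothesis $\theta_{\nu^+}$ extends to a homogeneous basis $\{\theta_{\nu^+}, \eta_2, \dots, \eta_\ell\}$ of $D(\CA,\nu^+)$; I claim $\{\theta_\nu, \eta_2, \dots, \eta_\ell\}$ is then a basis of $D(\CA,\nu)$. Indeed each $\eta_j$ lies in $D(\CA,\nu) \supseteq D(\CA,\nu^+)$ since decreasing a multiplicity enlarges the derivation module, $\theta_\nu \in D(\CA,\nu)$ by construction, and the coefficient matrix of the new tuple is obtained from that of the old by dividing one row by $\alpha_0$, so by Ziegler's version of Saito's criterion (Theorem \ref{thm:ziegler-saito}), $\det M(\theta_\nu, \eta_2,\dots,\eta_\ell) \doteq Q(\CA,\nu^+)/\alpha_0 = Q(\CA,\nu)$. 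Hence $(\CA,\nu)$ is free, $\theta_\nu$ is part of a basis of it, and the exponents satisfy $\exp(\CA,\nu) = \exp(\CA,\nu^+)$ except that the entry $\pdeg \theta_{\nu^+}$ is replaced by $\pdeg \theta_\nu = \pdeg\theta_{\nu^+} - 1$; this keeps the induction running with $\theta_\nu$ in the distinguished role. (Alternatively, one can phrase this last step as an application of the deletion direction of Theorem \ref{thm:add-del} combined with Theorem \ref{thm:restriction}, but the direct Saito-criterion computation is cleaner.)

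For the exponent formula, I would track the distinguished exponent through the induction. When $\nu = \mathbf{1}$ (simple case) we have $\theta_{\mathbf 1} = \theta_E$, which has polynomial degree $1$, and $\exp(\CA) = \{1, e_2, \dots, e_\ell\}$ with the $1$ coming precisely from $\theta_E$; the other exponents $e_2,\dots,e_\ell$ are unchanged throughout the chain since only the row corresponding to the distinguished derivation changes degree at each step. Passing from $\nu$ to $\nu^+$ increases $|\nu|$ by $1$ and increases $\pdeg\theta_\nu$ by $1$, so by induction on $|\nu| - |\CA|$ starting from $|\mathbf 1| = |\CA|$ we get $\pdeg\theta_\nu = 1 + (|\nu| - |\CA|)$, giving $\exp(\CA,\nu) = \{1 + |\nu| - |\CA|,\, e_2,\dots,e_\ell\}$ as claimed.

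The main obstacle — really the only substantive point — is verifying that $\{\theta_\nu, \eta_2,\dots,\eta_\ell\}$ genuinely is a basis of $D(\CA,\nu)$ rather than merely a set of elements with the right determinant up to containment issues; this is exactly where one must be careful that the $\eta_j$ remain in the larger module $D(\CA,\nu)$ and that no extra scalar or spurious factor creeps into the determinant comparison. Everything else is bookkeeping with $\eqref{eq:thetamu}$ and Saito's criterion.
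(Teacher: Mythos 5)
Your proof is correct and uses essentially the same idea as the paper: keep the complementary basis elements (which lie in the larger module $D(\CA,\nu)$), replace the distinguished derivation $\theta_\mu$ by $\theta_\nu$, and verify the basis via Ziegler's version of Saito's criterion (Theorem \ref{thm:ziegler-saito}), reading off the exponents from $\pdeg\theta_\nu = 1+|\nu|-|\CA|$. The only difference is that the paper performs the replacement in a single step, dividing by $\prod_H \alpha_H^{\mu(H)-\nu(H)}$ at once, whereas you peel off one multiplicity at a time; this is a cosmetic, not a substantive, difference.
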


\begin{proof}
	Suppose $\{\theta_\mu, \theta_2, \ldots, \theta_\ell\}$ is a basis of $D(\CA,\mu)$.
	Then the first statement is immediate by 
	Theorem \ref{thm:ziegler-saito} with $\theta_\mu$ replaced by $\theta_\nu$.  Moreover, if  $\theta_2, \ldots, \theta_\ell$ are homogeneous, the second statement follows. 
\end{proof}

\begin{corollary}
	\label{cor:thetamu}
	Suppose $(\CA,\mu)$ is free and that $\pdeg \theta_\mu = \min \exp(\CA,\mu)$. Then $(\CA,\nu)$ is free for every multiplicity $\nu$ with $\one \le \nu \le \mu$, and   $\exp (\CA,\nu) = \{1+ \vert \nu\vert - \vert \CA \vert,e_2,\ldots,e_\ell\}$, where $\exp (\CA) = \{1,e_2,\ldots,e_\ell\}$.
\end{corollary}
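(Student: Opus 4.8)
The plan is to deduce this immediately from Lemma \ref{lem:thetamu} by verifying its hypothesis under the stated assumption $\pdeg \theta_\mu = \min \exp(\CA,\mu)$. So the first step is to record that $\theta_\mu$, as defined in \eqref{eq:thetamu}, indeed lies in $D(\CA,\mu)$: this is standard, since $\theta_E(\alpha_H) \dot= \alpha_H$ and multiplying by $\prod_{H} \alpha_H^{\mu(H)-1}$ supplies the remaining factors so that $\theta_\mu(\alpha_H) \in \alpha_H^{\mu(H)} S$ for each $H \in \CA$. Note also that $\pdeg \theta_\mu = |\mu| - |\CA| + 1$, coming from the degree of the product $\prod_H \alpha_H^{\mu(H)-1}$ (which is $|\mu|-|\CA|$) plus $\pdeg \theta_E = 1$.

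The core step is to show that a nonzero homogeneous derivation whose polynomial degree equals the minimal exponent of a free multiarrangement is necessarily part of a homogeneous basis. I would argue as follows. Let $\exp(\CA,\mu) = \{d_1 \le d_2 \le \cdots \le d_\ell\}$ with $d_1 = \pdeg \theta_\mu$, and fix a homogeneous basis $\eta_1, \ldots, \eta_\ell$ of $D(\CA,\mu)$ with $\pdeg \eta_i = d_i$. Writing $\theta_\mu = \sum_i f_i \eta_i$ with $f_i \in S$ homogeneous of degree $d_1 - d_i \le 0$, only the terms with $d_i = d_1$ survive, and for those $f_i$ is a nonzero scalar; since $\theta_\mu \ne 0$, at least one such scalar is nonzero, say the one indexed by some $j$ with $d_j = d_1$. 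Then replacing $\eta_j$ by $\theta_\mu$ in the basis is an invertible (scalar) change of coordinates within the degree-$d_1$ part, so $\{\theta_\mu, \eta_1, \ldots, \widehat{\eta_j}, \ldots, \eta_\ell\}$ is again a homogeneous basis of $D(\CA,\mu)$. Hence $\theta_\mu$ is part of a basis.

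With that in hand, Lemma \ref{lem:thetamu} applies verbatim: $(\CA,\nu)$ is free for every $\nu$ with $\one \le \nu \le \mu$, and since $\exp(\CA) = \{1, e_2, \ldots, e_\ell\}$, the exponent formula gives $\exp(\CA,\nu) = \{1 + |\nu| - |\CA|, e_2, \ldots, e_\ell\}$ for each such $\nu$. (Here one uses that in Lemma \ref{lem:thetamu} the remaining basis elements $\theta_2, \ldots, \theta_\ell$ can be taken homogeneous with degrees independent of $\nu$, and that $\theta_E$ specializes across the family to give the first exponent; the degree bookkeeping is the same as above.) I expect the only genuinely delicate point to be the basis-replacement argument in the middle paragraph — specifically, being careful that the minimality $d_1 = \min \exp$ is exactly what forces the coefficients $f_i$ to be scalars rather than positive-degree polynomials, which is what makes the substitution invertible; everything else is routine degree counting and a direct appeal to the preceding lemma.
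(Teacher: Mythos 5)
Your proposal is correct and follows the paper's own (very terse) proof: the paper likewise observes that the hypothesis $\pdeg\theta_\mu=\min\exp(\CA,\mu)$ lets one choose $\theta_\mu$ as part of a basis of $D(\CA,\mu)$ and then invokes Lemma \ref{lem:thetamu}. Your middle paragraph simply fills in the standard degree-counting justification that the paper leaves implicit, so the two arguments are essentially identical.
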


\begin{proof}
	Since $\pdeg \theta_\mu = \min \exp(\CA,\mu)$, $\theta_\mu$ can be chosen as part of a basis of  $D(\CA,\mu)$. The result thus follows from Lemma \ref{lem:thetamu}.
\end{proof}

\begin{lemma}\label{free_deletion_of_multiarrangement}
	Assume $(\CA,\mu)$ is free and there is an $H_0 \in \CA$ with $\mu(H_0)=1$ such that
	the corresponding deletion $(\CA',\mu')$ is also free.
	Suppose one of the following 
	holds:
	\begin{enumerate} 
		\item [(i)] $\min \exp(\CA,\mu) = \vert \mu\vert - \vert \CA' \vert$.
		\item [(ii)] $\theta_\mu$ is part of a basis of $D(\CA,\mu)$ and $\vert \CA' \vert \not= \vert \mu^\ast \vert$.
	\end{enumerate}
	Then $(\CA',\nu)$ is free for every multiplicity $\nu$ with $\one \le \nu \le \mu'$. In particular, $\CA'$ is free.
\end{lemma}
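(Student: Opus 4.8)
The goal is to show that under either hypothesis (i) or (ii) the deletion $(\CA',\mu')$ satisfies the conclusion of Corollary~\ref{cor:thetamu}, namely that $\theta_{\mu'}$ can be taken as a member of a homogeneous basis of $D(\CA',\mu')$, or equivalently (by Corollary~\ref{cor:thetamu}) that $\pdeg\theta_{\mu'} = \min\exp(\CA',\mu')$; once this is known, the displayed freeness of $(\CA',\nu)$ for all $\one\le\nu\le\mu'$ is immediate from that corollary, and the special case $\nu=\one$ gives freeness of $\CA'$. The common thread is that $\theta_{\mu'}$ and $\theta_\mu$ differ only by the factor $\alpha_0^{\mu(H_0)-1}=\alpha_0^0=1$ since $\mu(H_0)=1$; so in fact $\theta_{\mu'}=\theta_\mu$ as an element of $\Der(S)$, and it always lies in $D(\CA',\mu')\supseteq D(\CA,\mu)$ because deleting $H_0$ (with $\mu(H_0)=1$) only relaxes one condition.

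First I would treat case (ii), which is the more conceptual one. By hypothesis $\theta_\mu$ is part of a basis $\{\theta_\mu,\theta_2,\dots,\theta_\ell\}$ of $D(\CA,\mu)$, with all $\theta_i$ homogeneous; write $d_i=\pdeg\theta_i$, so $\exp(\CA,\mu)=\{|\mu|-|\CA|+1, d_2,\dots,d_\ell\}$ using $\pdeg\theta_\mu = \sum_H(\mu(H)-1)+1 = |\mu|-|\CA|+1$. Apply the Addition--Deletion Theorem~\ref{thm:add-del} for multiarrangements to the triple of $(\CA,\mu)$ with respect to $H_0$: since $(\CA,\mu)$ and $(\CA',\mu')$ are both free, part~(iii) yields that $(\CA'',\mu^*)$ is free and that $\exp(\CA'',\mu^*)$ is obtained from $\exp(\CA,\mu)$ by deleting one exponent $b_\ell$, with $\exp(\CA',\mu')$ equal to $\exp(\CA,\mu)$ with $b_\ell$ replaced by $b_\ell-1$. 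The hypothesis $|\CA'|\ne|\mu^*|$ is exactly what forces the deleted exponent $b_\ell$ to be the one contributed by $\theta_\mu$: indeed $|\mu^*| = \sum_{i\ne\,(\text{index of }b_\ell)} b_i$, and if instead the Euler derivation's exponent $|\mu|-|\CA|+1$ survived into $\exp(\CA'',\mu^*)$ then a short count using $|\mu|-1=|\mu'|$ and $|\mu'|=|\CA'|+\sum(\text{exponents of }(\CA',\mu'))-(\text{correction})$ would give $|\mu^*|=|\CA'|$, contradicting the assumption. Hence $b_\ell = |\mu|-|\CA|+1 = \pdeg\theta_\mu$, so the exponent of $(\CA',\mu')$ coming from this slot is $b_\ell-1 = |\mu'|-|\CA'|+1 = \pdeg\theta_{\mu'}$; since $\theta_{\mu'}=\theta_\mu\in D(\CA',\mu')$ realizes this degree, and the remaining exponents of $(\CA',\mu')$ agree with $d_2,\dots,d_\ell\ge$ this value is not needed — what matters is that $\pdeg\theta_{\mu'}$ equals the minimal exponent — we may invoke Ziegler--Saito (Theorem~\ref{thm:ziegler-saito}) exactly as in the proof of Lemma~\ref{lem:thetamu} to conclude $\theta_{\mu'}$ is part of a basis of $D(\CA',\mu')$.

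For case (i), the hypothesis $\min\exp(\CA,\mu)=|\mu|-|\CA'|$ says the smallest exponent of $(\CA,\mu)$ is $|\mu|-|\CA'|=|\mu'|-|\CA'|+1 = \pdeg\theta_{\mu'}$. Now $\theta_{\mu'}=\theta_\mu$ is a homogeneous element of $D(\CA',\mu')$ of polynomial degree $\min\exp(\CA,\mu)$. Since $(\CA',\mu')$ is free with $|\mu'|=|\mu|-1$ and, by Theorem~\ref{thm:add-del} applied to the triple, $\exp(\CA',\mu')$ is $\exp(\CA,\mu)$ with one entry lowered by $1$, the minimum of $\exp(\CA',\mu')$ is at least $\min\exp(\CA,\mu)-1 = \pdeg\theta_{\mu'}-1$; but an element of $D(\CA',\mu')$ of degree $\pdeg\theta_{\mu'}$ cannot witness an exponent strictly below $\pdeg\theta_{\mu'}$, and a dimension/degree count (the sum of exponents of $(\CA',\mu')$ is $|\mu'|$) pins down $\min\exp(\CA',\mu')=\pdeg\theta_{\mu'}$, so again $\pdeg\theta_{\mu'}=\min\exp(\CA',\mu')$ and Corollary~\ref{cor:thetamu} applies. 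The main obstacle is the bookkeeping in case~(ii): making the exponent-counting argument airtight that $|\CA'|\ne|\mu^*|$ genuinely excludes the possibility that the Euler slot is the one \emph{not} deleted by the restriction — this is where one must be careful to use that $\mu(H_0)=1$ so that passing from $\mu$ to $\mu'$ is a genuine hyperplane deletion and $|\CA'|=|\CA|-1$, and to invoke Theorems~\ref{thm:restriction} and~\ref{thm:restriction2} to identify which basis element survives under $\rho$. Once that dichotomy is settled, both cases funnel into the same application of Ziegler--Saito and Corollary~\ref{cor:thetamu}.
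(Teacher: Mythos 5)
Your overall strategy --- reduce to showing that $\theta_{\mu'}=\theta_\mu$ is part of a homogeneous basis of $D(\CA',\mu')$ --- is the paper's strategy, but two of your key deductions are wrong. In case (ii) the exponent count runs backwards. Write $\exp(\CA,\mu)=\{b_1,\dots,b_\ell\}$ as in Theorem \ref{thm:add-del}, with $b_\ell=\pdeg(\alpha_0\theta_k)$ the entry that is lowered in $\exp(\CA',\mu')$ and deleted in $\exp(\CA'',\mu^\ast)$. Then $|\mu^\ast|=|\mu|-b_\ell$, so $b_\ell=\pdeg\theta_\mu=|\mu|-|\CA'|$ would give $|\mu^\ast|=|\CA'|$; hence the hypothesis $|\CA'|\ne|\mu^\ast|$ forces $b_\ell\ne\pdeg\theta_\mu$ --- the exact \emph{opposite} of your conclusion that ``the deleted exponent $b_\ell$ is the one contributed by $\theta_\mu$'' (your off-by-one identity $b_\ell-1=|\mu'|-|\CA'|+1$ is a symptom of the same reversal). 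The correct use of the hypothesis is that $\pdeg\theta_\mu\ne\pdeg(\alpha_0\theta_k)$, so in the basis $B$ of Theorem \ref{thm:restriction} the derivation $\theta_\mu$ can be taken as one of the $\theta_i$ with $i\ne k$ and therefore survives as a basis element of $D(\CA',\mu')$; your reversed conclusion would put $\theta_\mu$ in the $\alpha_0\theta_k$ slot, which is precisely the situation the hypothesis is there to exclude.

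The second problem is that both of your cases funnel through the claim $\pdeg\theta_{\mu'}=\min\exp(\CA',\mu')$ so that Corollary \ref{cor:thetamu} applies, and you call this ``equivalent'' to $\theta_{\mu'}$ being part of a basis. Only one implication holds. In case (ii) the hypothesis does not say that $\pdeg\theta_\mu$ is minimal, so minimality is simply unavailable and the right tool is Lemma \ref{lem:thetamu}. Even in case (i), where $\pdeg\theta_\mu=\min\exp(\CA,\mu)$, the lowered entry $b_\ell$ may itself equal this minimum (e.g. $\exp(\CA,\mu)=\{d,d,e\}$ with $b_\ell=d$), in which case $\min\exp(\CA',\mu')=\pdeg\theta_{\mu'}-1$; your ``dimension/degree count'' $\sum\exp(\CA',\mu')=|\mu'|$ holds in every scenario and pins down nothing. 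What case (i) actually yields is that $\theta_\mu$ has minimal degree in $D(\CA,\mu)$ and is not divisible by $\alpha_0$, so when $\theta_\mu$ is written as a constant combination of the minimal-degree elements of $B$, some $\theta_j$ with $j\ne k$ occurs with nonzero coefficient; exchanging that $\theta_j$ for $\theta_\mu$ exhibits $\theta_\mu$ in a basis of $D(\CA',\mu')$, and Lemma \ref{lem:thetamu} (not Corollary \ref{cor:thetamu}) then gives the freeness of $(\CA',\nu)$ for all $\one\le\nu\le\mu'$.
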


\begin{proof}
	According to Theorem \ref{thm:restriction} there exists a basis $\{\theta_1, \ldots, \theta_\ell \}$ of $(\CA',\mu')$ such that
	$B : = \{\theta_1, \ldots, \alpha_0 \theta_k, \ldots, \theta_\ell \}$
	is a basis of $D(\CA, \mu)$.
	We claim that conditions (i) and (ii) ensure that we may pick $\theta_\mu$ as one of the $\theta_1,\ldots,\theta_{k-1},\theta_{k+1},\ldots,\theta_\ell$.
	Assuming the claim, $\theta_\mu$ is also part of a basis of $D(\CA',\mu')$ and thus the result follows from Lemma \ref{lem:thetamu} applied to $(\CA',\mu')$.
	
	If condition (i) is satisfied then $\theta_\mu$ is a derivation of $(\CA,\mu)$ of minimal degree, by Corollary \ref{cor:thetamu}. 
	Since $\theta_\mu$ is not divisible by $\alpha_0$, it can be chosen as a part of the basis $B$.
	
	In view of Theorems \ref{thm:restriction} and \ref{thm:add-del},  condition (ii) implies that 
	\begin{equation}
	\label{eq:conditionb}
		\pdeg(\theta_\mu) = \vert \mu \vert - \vert \CA' \vert \not= \vert \mu \vert - \vert \mu^\ast \vert = \pdeg (\alpha_0 \theta_k). 
	\end{equation}
	The derivations of a basis of $D(\CA,\mu)$ do not depend on the picked derivations of other degrees. 
	Hence $\theta_\mu$ can be chosen as a part of the basis $B$.
	The claim follows.
\end{proof}

\begin{remark}
	Equation \eqref{eq:conditionb} shows that the second part in condition (ii) of Lemma \ref{free_deletion_of_multiarrangement} is necessary, since else $\alpha_0 \theta_k$ might be an $S$-linear combination of $\theta_\mu$ and a derivation of lower degree.
\end{remark}

We illustrate Lemma \ref{free_deletion_of_multiarrangement}  with two easy examples. In both cases, both conditions of the lemma fail, though the final statement in the lemma still holds in the first instance.  

\begin{example}
	Define $(\CA,\mu)$ by
	$$
	Q(\CA,\mu) = x^2y^2z(x+y)(y+z).
	$$
	Then $\theta_1 := x^2D_x - y^2D_y + z^2 D_z$, $\theta_2 := z(y+z)D_z$, and $\theta_\mu = xy \theta_E$ form a basis of $D(\CA,\mu)$.
	Let $H_0 := \ker(x+y)$. A basis  of $D(\CA,\mu)$ as in Theorem \ref{thm:restriction} is given by $\theta_1$, $\theta_2$, and $\theta_3 := (x+y)( y^2 D_y - z^2 D_z)$.
	Hence $\exp(\CA',\mu') = \{2,2,2\}$ and so  $\vert \CA' \vert = 4 = \vert \mu^\ast \vert$, by Theorem \ref{thm:add-del}.
	Also, $\min \exp(\CA,\mu) = 2 < \vert \mu\vert - \vert \CA' \vert = 7 - 4 = 3$.
	Thus neither condition in Lemma \ref{free_deletion_of_multiarrangement} is satisfied. Nevertheless, one can check that the statement of the conclusion of the lemma still holds: 
	$(\CA',\nu)$ is free for every multiplicity $\nu$ with $\one \le \nu \le \mu'$.
\end{example}

In our second example the conclusion of Lemma \ref{free_deletion_of_multiarrangement} fails since $\CA'$ is not free.

\begin{example}
	Define $(\CA,\mu)$ by
	$$
	Q(\CA,\mu) = x^2y^2z^2(x+y)(x-y)(x-z)(y+z).
	$$
	Then $\theta_1 := x^3D_x + y^3D_y + z^3 D_z$, $\theta_2 := x^2yD_x+xy^2D_y +z^2(y+z-x)D_z$, $\theta_\mu = xzy \theta_E$ form a basis of $D(\CA,\mu)$.
	Let $H_0 := \ker(x+y)$. A basis as in Theorem \ref{thm:restriction} of $D(\CA,\mu)$ is given by $\theta_1$, $\theta_2$, and $\theta_3 := (x+y)( x^2z D_x + y^2(y+z-x)D_y + xz^2 D_z)$.
	Hence $\exp(\CA',\mu') = \{3,3,3\}$ and so  $\vert \CA' \vert = 6 = \vert \mu^\ast \vert$, by Theorem \ref{thm:add-del}.
	Also, $\min \exp(\CA,\mu) = 3 < \vert \mu\vert - \vert \CA' \vert = 10 - 6 = 4$.
	Thus neither condition in Lemma \ref{free_deletion_of_multiarrangement} is satisfied.
	Now $\CA'$ is not free, since $\exp(\CA) = \{1,3,3\}$ and $\vert \CA'' \vert = 3 \not=1+3$. So in this instance, the conclusion of Lemma \ref{free_deletion_of_multiarrangement} is false.
\end{example}

\begin{lemma}
	[{\cite[Lem.~3.2]{hogeroehrle:Ziegler}}]
\label{lem:indfreechain}
Let $\CA$ be an inductively free arrangement. 
Suppose there is a free multiplicity $\mu > \one$ on $\CA$ 
along with a free filtration  
$\one = :\mu_1 < \ldots < \mu_n := \mu$ 
of $(\CA, \mu)$.
If each restriction  
along the free filtration is inductively free, then 
so is $(\CA, \mu)$.
\end{lemma}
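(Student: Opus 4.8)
The plan is to prove, by induction on $i$, that $(\CA,\mu_i)$ is inductively free for every $i=1,\dots,n$; the case $i=n$ is the assertion. For the base case $i=1$ one has $(\CA,\mu_1)=(\CA,\one)$, and since for the simple multiplicity the Euler multiplicity of any restriction is again simple (cf.~Definition \ref{def:Sbar}), an inductive chain witnessing $\CA\in\CIF$ lifts verbatim to one witnessing $(\CA,\one)\in\CIFM$; thus $(\CA,\mu_1)\in\CIFM$ by the hypothesis on $\CA$.

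For the inductive step, fix $1\le i<n$ and assume $(\CA,\mu_i)\in\CIFM$. Along the free filtration, $\mu_{i+1}$ differs from $\mu_i$ by raising the multiplicity at a single hyperplane $H_i\in\CA$ by one. Since $\mu_i\ge\one$ forces $\mu_{i+1}(H_i)\ge 2$, Definition \ref{def:Euler} shows that the triple of $(\CA,\mu_{i+1})$ with respect to $H_i$ has deletion $(\CA',\mu')=(\CA,\mu_i)$ and restriction $(\CA'',\mu^*)=(\CA^{H_i},\mu_i^*)$, the latter being precisely the restriction attached to this step of the filtration, hence in $\CIFM$ by hypothesis. So two of the three members of the triple already lie in $\CIFM$, and to conclude $(\CA,\mu_{i+1})\in\CIFM$ via Definition \ref{def:indfree}(ii) it remains only to verify the exponent inclusion $\exp(\CA'',\mu^*)\subseteq\exp(\CA',\mu')$.

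This exponent comparison is the one point requiring care, and it is delivered by the addition--deletion package of Abe--Terao--Wakefield. Both $(\CA,\mu_{i+1})$ and $(\CA,\mu_i)$ are free, being consecutive members of the free filtration, so Theorem \ref{thm:restriction} yields a homogeneous basis of $D(\CA,\mu_i)$ one member of which, after multiplication by the linear form defining $H_i$, completes the remaining members to a basis of $D(\CA,\mu_{i+1})$; hence $\exp(\CA,\mu_{i+1})$ is obtained from $\exp(\CA,\mu_i)$ by increasing exactly one entry by $1$. Writing $\exp(\CA,\mu_i)=\{b_1,\dots,b_{\ell-1},b_\ell-1\}$ accordingly, statements (i) and (ii) of the Addition Deletion Theorem \ref{thm:add-del} both hold, so its conclusion (iii) gives $\exp(\CA'',\mu^*)=\{b_1,\dots,b_{\ell-1}\}\subseteq\exp(\CA',\mu')$ (this exponent relation can equally be read off directly from Theorem \ref{thm:restriction2}). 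Definition \ref{def:indfree}(ii) now applies and gives $(\CA,\mu_{i+1})\in\CIFM$, closing the induction, and the case $i=n$ is the claim. The main obstacle is thus confined to this bookkeeping at each deletion step — one must know that a free deletion of a free multiarrangement automatically respects the exponent pattern demanded by Definition \ref{def:indfree}(ii) — which is exactly what Theorems \ref{thm:restriction} and \ref{thm:restriction2} guarantee.
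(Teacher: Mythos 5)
Your proof is correct. The paper itself does not reprove this lemma (it is quoted from \cite[Lem.~3.2]{hogeroehrle:Ziegler}), but your argument --- inducting along the filtration, observing that the deletion of $(\CA,\mu_{i+1})$ at the hyperplane where the multiplicity increases is exactly $(\CA,\mu_i)$, and using Theorems \ref{thm:restriction} and \ref{thm:add-del} to secure the exponent inclusion required by Definition \ref{def:indfree}(ii) --- is precisely the standard addition-theorem argument underlying the cited proof, including the correct handling of the base case $(\CA,\one)\in\CIFM$ via the fact that the Euler multiplicity of a simple multiarrangement is again simple.
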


In our next result we present a mild condition on a free multiplicity $\mu$ 
of a free arrangement $\CA$ which implies that every intermediate 
multiplicity $\one < \nu < \mu$ is also free.

\begin{lemma} 
		[{\cite[Lem.~3.4]{hogeroehrle:Ziegler}}]
\label{lem:multifreetube}
Let $\CA$ be a free arrangement with exponents $1 \le e_2 \le \ldots \le e_\ell$.
Assume that there is a free multiplicity $\mu > \one$ on $\CA$ with
$\exp(\CA,\mu) = \{e, e_2,\ldots,e_\ell\}$, where   
$e = 1 + \vert \mu \vert - \vert \CA \vert$.
Suppose that $\vert \mu \vert - \vert \CA \vert \ge e_\ell$. 
Let $\nu$ be a multiplicity satisfying $\one < \nu <  \mu$.
Then $(\CA,\nu)$ is free with 
$\exp(\CA,\nu) = \{\tilde e , e_2, \ldots, e_\ell\}$, where
$\tilde e = 1+ \vert \nu \vert - \vert \CA \vert$.
\end{lemma}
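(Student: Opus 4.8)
The plan is to reduce everything to the assertion that the canonical derivation $\theta_\mu$ of \eqref{eq:thetamu} is part of a homogeneous basis of $D(\CA,\mu)$. Granting this, Lemma~\ref{lem:thetamu} applied to $(\CA,\mu)$ immediately yields that $(\CA,\nu)$ is free with $\exp(\CA,\nu)=\{1+|\nu|-|\CA|,e_2,\ldots,e_\ell\}$ for \emph{every} multiplicity $\nu$ with $\one\le\nu\le\mu$, in particular for every $\nu$ with $\one<\nu<\mu$; this is exactly the claim, with $\tilde e=1+|\nu|-|\CA|$. So from now on the only task is to show that $\theta_\mu$ is part of a basis of $D(\CA,\mu)$.

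First note $\pdeg\theta_\mu=1+|\mu|-|\CA|=e$, and the hypothesis $|\mu|-|\CA|\ge e_\ell$ gives $e\ge e_\ell+1>e_\ell$; hence $e$ is the strictly largest element of $\exp(\CA,\mu)=\{e,e_2,\ldots,e_\ell\}$ and occurs there with multiplicity one. Fix a homogeneous basis $\theta_1,\ldots,\theta_\ell$ of $D(\CA,\mu)$ with $\pdeg\theta_\ell=e$ and $\pdeg\theta_i=e_{i+1}<e$ for $1\le i\le\ell-1$, and write $\theta_\mu=c\,\theta_\ell+\sum_{i<\ell}f_i\theta_i$ with $c\in\BBK$ and $f_i$ homogeneous of positive degree $e-e_{i+1}$. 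If $c\ne 0$ then $\{\theta_\mu,\theta_1,\ldots,\theta_{\ell-1}\}$ is again a basis of $D(\CA,\mu)$ and we are done. If $c=0$, then $\theta_\mu$ lies in $P:=\bigoplus_{i<\ell}S\theta_i$, which, since $e$ is the unique top exponent, is precisely the submodule of $D(\CA,\mu)$ generated by all homogeneous derivations of degree $<e$; any homogeneous basis containing $\theta_\mu$ would have its remaining $\ell-1$ members of degrees $e_2,\ldots,e_\ell<e$, hence inside the rank-$(\ell-1)$ module $P$, which is impossible. Thus it remains only to rule out $c=0$.

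Assume $c=0$. Put $q:=\prod_{H\in\CA}\alpha_H^{\mu(H)-1}$, so that $\theta_\mu=q\,\theta_E$ and $\deg q=|\mu|-|\CA|=e-1$. A direct check shows $q\,\phi\in D(\CA,\mu)$ for every $\phi\in D(\CA)$; fixing a homogeneous basis $\theta_E=\phi_1,\phi_2,\ldots,\phi_\ell$ of $D(\CA)$ with $\pdeg\phi_i=e_i$ (where $e_1=1$) we thus obtain a rank-$\ell$ free submodule $q\,D(\CA)=\bigoplus_iS\,q\phi_i\subseteq D(\CA,\mu)$ containing $q\phi_1=\theta_\mu$. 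The contradiction is then to be extracted by a graded (Hilbert-function and rank) comparison of the submodules $q\,D(\CA)\subseteq D(\CA,\mu)$, $P\subseteq D(\CA,\mu)$ and $q\,P\subseteq q\,D(\CA)\cap P$, using that $D(\CA)/D(\CA,\mu)$ has Hilbert series $(t-t^{e})/(1-t)^\ell$; the inequality $|\mu|-|\CA|\ge e_\ell$ enters decisively here, constraining the low-degree graded pieces of $q\,D(\CA)$ against those of $P$ in a way incompatible with $\theta_\mu\in P$.

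The crux — and the main obstacle — is exactly this last step: in contrast to a homogeneous element of \emph{minimal} polynomial degree, one of \emph{maximal} degree in a free graded module need not be part of a basis, so the argument must genuinely use both the explicit shape $\theta_\mu=q\,\theta_E$ and the precise exponent datum $\exp(\CA,\mu)=\{e,e_2,\ldots,e_\ell\}$ together with $e>e_\ell$. (The regime in which $e$ is neither the smallest nor the largest exponent is deliberately excluded by the hypothesis; the complementary case $e\le e_\ell$, where $\theta_\mu$ attains the minimal degree, would instead follow at once from Corollary~\ref{cor:thetamu}.) A downward induction from $\mu$ to $\nu$ along single-step deletions of multiplicities combined with the Addition--Deletion Theorem~\ref{thm:add-del} is a conceivable alternative, but it runs into the same difficulty of pinning down the Euler multiplicity at each step.
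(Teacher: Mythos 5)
The paper itself does not prove this lemma; it quotes it from \cite[Lem.~3.4]{hogeroehrle:Ziegler}, so your argument has to stand on its own. Your reduction is fine and is the natural one given the surrounding toolkit: if $\theta_\mu$ from \eqref{eq:thetamu} is part of a homogeneous basis of $D(\CA,\mu)$, then Lemma \ref{lem:thetamu} gives the full conclusion, and your observation that $e>e_\ell$ forces ``$\theta_\mu$ is part of a basis'' to be equivalent to ``the coefficient $c$ of the unique top--degree basis element is nonzero'' is correct. The problem is that you never prove $c\neq 0$. The entire third paragraph is a description of where a contradiction ``is to be extracted'' rather than an extraction of it, and you say so yourself (``the crux --- and the main obstacle --- is exactly this last step''). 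Since, as you also correctly note, an element of maximal degree in a free graded module need not be part of a basis, the statement $c\neq 0$ is precisely the mathematical content of the lemma; leaving it as a programme means the proof is not there. As submitted, this is a genuine gap, not a routine verification omitted for brevity.

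The gap can be closed, and your instinct about a Hilbert--series comparison is right, but it should be applied to different modules than the ones you name ($q\,D(\CA)$ and $P$). Work instead in the quotient $D(\CA)/D(\CA,\mu)$ (note $D(\CA,\mu)\subseteq D(\CA)$ since $\mu\ge\one$). Because the $\alpha_H$ are pairwise coprime, $f\theta_E\in D(\CA,\mu)$ if and only if $q\mid f$, where $q=\prod_H\alpha_H^{\mu(H)-1}$; hence $\Ann_S(\overline{\theta_E})=qS$ and the cyclic submodule $S\overline{\theta_E}$ has Hilbert series $t(1-t^{\,|\mu|-|\CA|})/(1-t)^\ell=(t-t^{e})/(1-t)^\ell$. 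On the other hand, the hypothesis $\exp(\CA,\mu)=\{e,e_2,\ldots,e_\ell\}$ gives exactly the same Hilbert series for $D(\CA)/D(\CA,\mu)$, so the inclusion $S\overline{\theta_E}\subseteq D(\CA)/D(\CA,\mu)$ is an equality, i.e.\ $D(\CA)=S\theta_E+D(\CA,\mu)$. By graded Nakayama (using that $\theta_E$ is always a minimal generator of $D(\CA)$, cf.\ $D(\CA)=S\theta_E\oplus D_{H_0}(\CA)$) one may then choose a homogeneous basis $\theta_E,\psi_2,\ldots,\psi_\ell$ of $D(\CA)$ with $\psi_2,\ldots,\psi_\ell\in D(\CA,\mu)$, and Theorem \ref{thm:ziegler-saito} applied to $\{q\theta_E,\psi_2,\ldots,\psi_\ell\}$, whose Saito determinant is $q\cdot\det M(\theta_E,\psi_2,\ldots,\psi_\ell)\ \dot=\ Q(\CA,\mu)$, shows that $\theta_\mu=q\theta_E$ is part of a basis of $D(\CA,\mu)$. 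Note that this completion does not visibly invoke the inequality $|\mu|-|\CA|\ge e_\ell$ (only the exact shape of $\exp(\CA,\mu)$), which is a further sign that your sketch had not yet located where the argument really lives; in any case, until the step ``$c\neq 0$'' is written out in some such form, the proposal cannot be accepted as a proof. Two small additional inaccuracies: the case $e\le e_\ell$ is covered by Corollary \ref{cor:thetamu} only when $e\le e_2$, i.e.\ when $e$ is actually the minimal exponent; and the Hilbert series of $D(\CA)/D(\CA,\mu)$ is $(t-t^{e})/(1-t)^\ell$ only because the hypothesis prescribes $\exp(\CA,\mu)$ --- it is not an a priori fact.
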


We record an important consequence of Lemma \ref{lem:multifreetube}
and Theorem \ref{thm:add-del} which shows that 
the multisets of exponents of the restrictions 
along a free filtration, as in Lemma \ref{lem:multifreetube}, 
are constant and do not depend on 
the Euler multiplicities $\mu^*_i$.

\begin{corollary}
		[{\cite[Cor.~3.8]{hogeroehrle:Ziegler}}]
\label{cor:multifreetuberestrictions}
Let $\CA$ and $(\CA, \mu)$ be as in Lemma \ref{lem:multifreetube}.
Let $\one = :\mu_1 < \ldots < \mu_n := \mu$ be 
a free filtration of $(\CA, \mu)$. 
Then for each restriction along the chain, we have $(\CA'', \mu^*_i) = (\CA'',\kappa)$,
where $\kappa$ is Ziegler's canonical multiplicity on $\CA''$.
In particular, each such restriction along the filtration is free with 
$\exp (\CA'', \mu^*_i) = \exp(\CA'',\kappa) = \{e_2, \ldots, e_\ell\}$, 
where $\exp \CA =\{1, e_2, \ldots, e_\ell\}$.
\end{corollary}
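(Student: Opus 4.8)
The plan is to combine Lemma~\ref{lem:multifreetube} with the Addition Deletion Theorems to pin down the exponents, and then to establish the equality $\mu^*_i=\kappa$ \emph{pointwise} by a rank-two computation at each flat $Y\in\CA''$. First, by Lemma~\ref{lem:multifreetube} (applied to each $\mu_j$ with $\one<\mu_j<\mu$, and trivially to $\mu_1=\one$ and $\mu_n=\mu$), every member of the filtration is free with $\exp(\CA,\mu_j)=\{\,1+|\mu_j|-|\CA|,\ e_2,\dots,e_\ell\,\}$. Fixing a step $i$, let $H_0=H_0(i)$ be the hyperplane at which the multiplicity increases, so $\mu_{i+1}(H_0)=\mu_i(H_0)+1\ge 2$ and $\mu_{i+1}=\mu_i$ off $H_0$; then $(\CA,\mu_i)=(\CA,\mu_{i+1})'$ in the triple at $H_0$, and $\exp(\CA,\mu_{i+1})$ and $\exp(\CA,\mu_i)$ differ only by $1$ in the slot $1+|\mu_{i+1}|-|\CA|$. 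Theorem~\ref{thm:add-del} therefore yields that $(\CA^{H_0},\mu^*_i)$ is free with $\exp(\CA^{H_0},\mu^*_i)=\{e_2,\dots,e_\ell\}$, and Theorem~\ref{thm:zieglermulti} gives the same for $(\CA^{H_0},\kappa)$; this already proves the ``in particular'' assertion, and comparing orders gives $|\mu^*_i|=e_2+\cdots+e_\ell=|\CA|-1=|\kappa|$. It thus remains to prove $\mu^*_i(Y)=\kappa(Y)=|\CA_Y|-1$ for each $Y\in\CA^{H_0}$.

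The technical heart will be showing that the canonical derivation $\theta_{\mu_j}$ of \eqref{eq:thetamu} lies in a homogeneous basis of $D(\CA,\mu_j)$ for every $j$. I would prove this by induction along the filtration: for $j=1$ it holds since $\theta_{\mu_1}=\theta_E$ always lies in a basis of $D(\CA)$ for the (central, essential) arrangement $\CA$; for the step, one uses that $\theta_{\mu_{i+1}}=\alpha_0\theta_{\mu_i}$ (with $\alpha_0=\alpha_{H_0}$, since $\mu_{i+1}$ and $\mu_i$ differ only at $H_0$), applies Theorem~\ref{thm:restriction} to the triple at $H_0$ to obtain a basis $\{\eta_1,\dots,\eta_\ell\}$ of $D(\CA,\mu_i)$ with $\{\eta_1,\dots,\alpha_0\eta_r,\dots,\eta_\ell\}$ a basis of $D(\CA,\mu_{i+1})$, observes via the Addition Deletion bookkeeping above that $\pdeg\eta_r=1+|\mu_i|-|\CA|=\pdeg\theta_{\mu_i}$, and then exchanges $\eta_r$ for $\theta_{\mu_i}$ in the style of the proof of Lemma~\ref{free_deletion_of_multiarrangement}. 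The delicate case is when $1+|\mu_i|-|\CA|$ coincides with some $e_m$ and so is repeated in $\exp(\CA,\mu_i)$; there one must argue more carefully that the coefficient of $\theta_{\mu_{i+1}}$ on a top-degree basis vector of $D(\CA,\mu_{i+1})$ is a unit. I expect this propagation, together with its transport to localizations below, to be the main obstacle.

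With that in hand, fix $Y\in\CA^{H_0}$, set $k:=|\CA_Y|$, and let $\mathfrak p\subset S$ be the height-two prime of $Y$. Since $D(\CA,\mu_{i+1})_{\mathfrak p}=D(\CA_Y,(\mu_{i+1})_Y)_{\mathfrak p}$ and $\theta_{\mu_{i+1}}$ differs from $\theta_{(\mu_{i+1})_Y}$ by the factor $\prod_{H\notin\CA_Y}\alpha_H^{\mu_{i+1}(H)-1}$, a unit in $S_{\mathfrak p}$, the previous claim transports to the rank-two setting (the hyperplanes of $\CA_Y$ involve only the two coordinates cutting out $\mathfrak p$, so $D_3,\dots,D_\ell$ split off and, for homogeneous elements, being part of a basis is detected after localizing at $\mathfrak p$): thus $\theta_{(\mu_{i+1})_Y}$ lies in a homogeneous basis $\{\theta_{(\mu_{i+1})_Y},\chi,D_3,\dots,D_\ell\}$ of $D(\CA_Y,(\mu_{i+1})_Y)$. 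As $\mu_{i+1}(H_0)\ge 2$ we have $\theta_{(\mu_{i+1})_Y}=\alpha_0\theta_{(\mu_i)_Y}\in\alpha_0\Der(S)$. The key point is then that $\chi\notin\alpha_0\Der(S)$: if it were, then $\chi=\alpha_0\chi'$ with $\chi'\in D(\CA_Y,(\mu_i)_Y)$ (as $\alpha_0$ is coprime to $\alpha_H$ for $H\in\CA_Y\setminus\{H_0\}$), and Theorem~\ref{thm:ziegler-saito} would give
\[
\det M(\theta_{(\mu_i)_Y},\chi',D_3,\dots,D_\ell)\ \dot=\ \alpha_0^{-2}Q(\CA_Y,(\mu_{i+1})_Y)=\alpha_0^{-1}Q(\CA_Y,(\mu_i)_Y),
\]
which is impossible, since $Q(\CA_Y,(\mu_i)_Y)$ divides the coefficient determinant of any $\ell$ derivations in $D(\CA_Y,(\mu_i)_Y)$. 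Hence $\{\chi,\theta_{(\mu_{i+1})_Y},D_3,\dots,D_\ell\}$ has the shape in Definition~\ref{def:Euler}, with $\psi_Y=\theta_{(\mu_{i+1})_Y}$ and $\theta_Y=\chi$.

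Finally, $\pdeg\theta_{(\mu_{i+1})_Y}=|(\mu_{i+1})_Y|-k+1$ directly from \eqref{eq:thetamu}, while $\pdeg\theta_Y+\pdeg\psi_Y=|(\mu_{i+1})_Y|$ by Theorem~\ref{thm:ziegler-saito} applied to the basis just produced; therefore $\mu^*_i(Y)=\pdeg\theta_Y=k-1=|\CA_Y|-1=\kappa(Y)$. Since $Y\in\CA^{H_0}$ was arbitrary, $(\CA^{H_0},\mu^*_i)=(\CA^{H_0},\kappa)$, which—together with the exponents found in the first step—finishes the proof. As in the statement, here $\CA''$ and $\kappa$ refer to the restriction $\CA^{H_0(i)}$ and its Ziegler multiplicity for the hyperplane $H_0(i)$ used at the $i$-th step of the filtration.
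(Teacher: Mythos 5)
The paper does not actually prove this corollary here --- it is quoted from \cite[Cor.~3.8]{hogeroehrle:Ziegler}, and the only hint given (in the proof of Lemma \ref{lemma:free_sequence_to_simple_multiplicity}) is that the argument there runs via concentrated multiplicities. Your overall strategy is nevertheless the right one, and two of its three parts are correct: the deduction of freeness and of $\exp(\CA'',\mu_i^*)=\{e_2,\ldots,e_\ell\}$ from Lemma \ref{lem:multifreetube} together with Theorem \ref{thm:add-del} is fine, and the rank-two endgame (identifying $\psi_Y=\theta_{(\mu_{i+1})_Y}$, the determinant argument showing $\chi\notin\alpha_0\Der(S)$, and the degree count giving $\pdeg\theta_Y=k-1$) is correct and is essentially the computation behind \cite[Prop.~5.2]{abeteraowakefield:euler}.

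The genuine gap is exactly where you flag it: the claim that $\theta_{\mu_j}$ lies in a basis of $D(\CA,\mu_j)$ is not a technicality but the entire content of the corollary, and your proposed induction does not close. Without that claim the conclusion is simply false pointwise: if some localization $(\CA_Y,(\mu_{i+1})_Y)$ were, say, $(3,2,1)$ on three concurrent lines with $H_0$ carrying the $3$, then its exponents are $\{3,3\}$, the derivation $\theta_{(\mu_{i+1})_Y}$ has degree $4$ and is \emph{not} part of a basis, and the Euler multiplicity is $3\neq k-1=2$; so the global hypothesis must be what rules such localizations out. Your induction step, however, needs to exchange $\theta_{\mu_i}$ for the distinguished basis vector $\eta_r$ of Theorem \ref{thm:restriction} (the one multiplied by $\alpha_0$), and this requires the coefficient of $\theta_{\mu_i}$ on $\eta_r$ --- not merely on \emph{some} basis vector of the same degree --- to be a unit; if that coefficient vanishes, then $\alpha_0\theta_{\mu_i}=\theta_{\mu_{i+1}}\in\mathfrak{m}\,D(\CA,\mu_{i+1})$ and the induction hypothesis fails at the next step. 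Requiring that unit is equivalent to what you are trying to prove, so the argument is circular precisely in the repeated-degree case; and that case is unavoidable, since the moving exponent $1+|\mu_j|-|\CA|$ sweeps through every integer from $1$ to $1+|\mu|-|\CA|\ge 1+e_\ell$ and therefore collides with each of $e_2,\ldots,e_\ell$ along the way. The claim can instead be proved directly (no induction along the filtration) from the conclusion of Lemma \ref{lem:multifreetube}: one has $S\theta_E\cap D(\CA,\mu_j)=S\theta_{\mu_j}$, the induced graded injection $S\theta_E/S\theta_{\mu_j}\hookrightarrow D(\CA)/D(\CA,\mu_j)$ is an isomorphism because both sides have Hilbert series $(t-t^{1+|\mu_j|-|\CA|})/(1-t)^\ell$, hence $D(\CA,\mu_j)/S\theta_{\mu_j}\cong D(\CA)/S\theta_E$ is free and the extension splits, so $\theta_{\mu_j}$ generates a free direct summand. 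With that lemma in place (and with the localization transport written a little more carefully --- ``part of a basis after localizing at $\mathfrak{p}$'' does not in general imply ``part of a basis'', and one must use that the $D_j$-components of $\theta_{(\mu_{i+1})_Y}$ for $j\ge 3$ already lie in $\mathfrak{m}\cdot S D_j$), the rest of your argument goes through.
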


\subsection{}
The next lemma is a crucial ingredient in our proofs in \S \ref{Sec:proofs} of Theorems \ref{thm:main} to \ref{thm:main6}.  

	\begin{lemma}
		\label{lemma:free_sequence_to_simple_multiplicity}
		Let $\CA$ be an $\ell$-arrangement. Suppose $H_0 \in \CA$ such that $\CA' = \CA \setminus \{H_0\}$ and $\CA'' = \CA^{H_0}$ are free.
		Let $(\CA'',\kappa)$ be the Ziegler restriction of $\CA$ to $H_0$. Then $D(\CA'',\mu)$ is free for every multiplicity
		$\one \le \mu \le \kappa$ with $\exp(\CA'',\mu) = \{1+|\mu| - |\CA''|, e_2, \ldots, e_{\ell-1}\}$, where $\exp \CA'' = \{1, e_2, \ldots, e_{\ell-1}\}$.
		
		Moreover, let $(\CA'',\mu_i)$ be a free filtration of $(\CA'',\kappa)$, as in Definition \ref{def:freefiltration}, and let $H_i$ be the hyperplane of $\CA''$ involved in the $i$-th addition step in this filtration.
		Then the Euler restriction $\mu_i^*$ is the Ziegler restriction of $\CA''$ to $H_i \cap H_0$.
	\end{lemma}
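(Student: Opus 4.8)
The plan for the first assertion is to combine an elementary count with Lemma~\ref{lem:thetamu}. First one records the identity $|\kappa|=|\CA'|$: each $H\in\CA'=\CA\setminus\{H_0\}$ meets $H_0$ in a single member $H\cap H_0\in\CA''$ and lies above no other member of $\CA''$, hence contributes exactly $1$ to $\kappa(H\cap H_0)$, so $|\kappa|=\sum_{Y\in\CA''}\kappa(Y)=|\CA'|$. Next, since the deletion $\CA'$ is free, $(\CA'',\kappa)$ is free and the canonical derivation $\theta_\kappa$ of~\eqref{eq:thetamu} may be taken as a member of a homogeneous basis of $D(\CA'',\kappa)$; this is the one genuinely non-formal ingredient, and it is precisely where one uses the structure of $D(\CA)$ when $\CA$ admits a free deletion (the ``next-to-free'', i.e.\ plus-one generated, situation), since $\CA$ itself need not be free. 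Granting this, $\pdeg\theta_\kappa=|\kappa|-|\CA''|+1$, and Lemma~\ref{lem:thetamu} applied to $(\CA'',\kappa)$, together with freeness of $\CA''$ and $\exp\CA''=\{1,e_2,\dots,e_{\ell-1}\}$, yields at once that $(\CA'',\mu)$ is free with $\exp(\CA'',\mu)=\{1+|\mu|-|\CA''|,e_2,\dots,e_{\ell-1}\}$ for every $\one\le\mu\le\kappa$; in particular this exhibits the free filtrations of $(\CA'',\kappa)$ needed in the second part.

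For the ``Moreover'' part I would argue hyperplane by hyperplane on the restriction. Fix the $i$-th addition step; it raises $\mu(H_i)$ by one for some $H_i\in\CA''$, so in the triple $(\CA'',\mu_i),(\CA'',\mu_{i-1}),((\CA'')^{H_i},\mu_i^*)$ the deletion is $(\CA'',\mu_{i-1})$. Write $\kappa'$ for the Ziegler multiplicity of $\CA''$ on $H_i$, so $\kappa'(Z)=|(\CA'')_Z|-1$ for $Z\in(\CA'')^{H_i}$; the task is to show $\mu_i^*(Z)=\kappa'(Z)$ for every such $Z$. The key observation is that $\mu_i^*(Z)$ is computed, by Definition~\ref{def:Euler}, inside the rank-$2$ localization $((\CA'')_Z,(\mu_i)_Z)$, and that localization is governed by the first part of the present lemma. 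Indeed $Z$ is a codimension-$3$ flat of $\CA$ (the intersection of $H_0$ with two members of $\CA'$), and for the localization $\CA_Z$ one checks directly that $H_0\in\CA_Z$, that $\CA_Z\setminus\{H_0\}=(\CA')_Z$ is free (a localization of the free arrangement $\CA'$), that $(\CA_Z)^{H_0}=(\CA'')_Z$ is free (it has rank $2$), and that the Ziegler multiplicity of $\CA_Z$ on $H_0$ is $\kappa$ restricted to $(\CA'')_Z$. Hence the first part applies to $\CA_Z$ and shows $((\CA'')_Z,\nu)$ is free for every $\one\le\nu\le\kappa|_{(\CA'')_Z}$, with nonzero exponents $1+|\nu|-m$ and $m-1$, where $m:=|(\CA'')_Z|$ (using that a rank-$2$ simple arrangement of $m$ hyperplanes has exponents $\{1,m-1\}$).

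Applying this to $\nu=(\mu_i)_Z$ and to $\nu=(\mu_{i-1})_Z$, which differ only at $H_i$ and satisfy $|(\mu_{i-1})_Z|=|(\mu_i)_Z|-1$, gives nonzero exponents $\{1+|(\mu_i)_Z|-m,\ m-1\}$ for $((\CA'')_Z,(\mu_i)_Z)$ and $\{|(\mu_i)_Z|-m,\ m-1\}$ for $((\CA'')_Z,(\mu_{i-1})_Z)$. On the other hand Definition~\ref{def:Euler} provides a basis $\{\theta_Z,\psi_Z,D_3,\dots,D_\ell\}$ of $D((\CA'')_Z,(\mu_i)_Z)$ with $\theta_Z\notin\alpha_{H_i}\Der(S)$, $\psi_Z\in\alpha_{H_i}\Der(S)$ and $\mu_i^*(Z)=\pdeg\theta_Z$; writing $\psi_Z=\alpha_{H_i}\psi_Z'$, Saito's criterion (Theorem~\ref{thm:ziegler-saito}) shows $\{\theta_Z,\psi_Z',D_3,\dots,D_\ell\}$ is a basis of $D((\CA'')_Z,(\mu_{i-1})_Z)$. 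Thus passing from $(\mu_i)_Z$ to $(\mu_{i-1})_Z$ lowers exactly one exponent by $1$ and leaves the other unchanged; comparing with the two exponent pairs above forces $\pdeg\theta_Z=m-1$, whence $\mu_i^*(Z)=m-1=|(\CA'')_Z|-1=\kappa'(Z)$, as required.

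The hard part is the non-formal input of the first paragraph: that a free deletion $\CA\setminus\{H_0\}$ already forces $(\CA'',\kappa)$ to be free with $\theta_\kappa$ occurring in a basis. This is strictly stronger than Ziegler's Theorem~\ref{thm:zieglermulti}, because here $\CA$ itself need not be free and freeness of a Ziegler restriction is far from automatic; it is exactly the place where the plus-one generated structure of $D(\CA)$ enters. Everything else — the count $|\kappa|=|\CA'|$, the descent along a free filtration via Lemma~\ref{lem:thetamu}, and the rank-$2$/rank-$3$ localization bookkeeping controlling the Euler multiplicity — is routine once that input is available.
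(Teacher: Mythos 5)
Your treatment of the ``Moreover'' part is sound and in fact more explicit than the paper's (which defers to the proof of \cite[Cor.~3.8]{hogeroehrle:Ziegler}): localizing at $Z\in(\CA'')^{H_i}$, applying the first assertion to $\CA_Z$ with respect to $H_0$, and comparing the resulting exponent pairs $\{1+|(\mu_i)_Z|-m,\,m-1\}$ and $\{|(\mu_i)_Z|-m,\,m-1\}$ with the $\theta_Z,\psi_Z$ basis of Definition~\ref{def:Euler} does force $\mu_i^*(Z)=m-1=\kappa'(Z)$. That argument is correct, and the identifications $\CA_Z\setminus\{H_0\}=(\CA')_Z$ and $(\CA_Z)^{H_0}=(\CA'')_Z$ you use are all valid.

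The problem is the first part, where you explicitly leave the crux unproved. You assert that ``since the deletion $\CA'$ is free, $(\CA'',\kappa)$ is free and $\theta_\kappa$ may be taken as a member of a homogeneous basis,'' call this the one non-formal ingredient, gesture at a ``plus-one generated'' structure of $D(\CA)$, and then say ``granting this.'' That is precisely the content that has to be established, and no argument is given. Moreover, as you formulate it the claim is attributed to freeness of $\CA'$ alone, whereas the hypothesis that $\CA''$ is free is equally essential. The paper's proof runs as follows: since $\CA'$ is free, the restriction map $\rho\colon D(\CA)\to D(\CA'')$ of \eqref{eq:rho} is surjective by \cite[Thm.~1.13]{abe:projectivedimension}; using the splitting $D(\CA)=S\theta_E\oplus D_{H_0}(\CA)$ and the freeness of $\CA''$, one chooses a homogeneous $\overline S$-basis $\theta_E,\theta_2,\dots,\theta_{\ell-1}$ of $D(\CA'')$ with $\theta_2,\dots,\theta_{\ell-1}\in\rho(D_{H_0}(\CA))$; Ziegler's observation that $\rho$ maps $D_{H_0}(\CA)$ into $D(\CA'',\kappa)$ places these $\ell-2$ derivations in $D(\CA'',\kappa)$; and finally Theorem~\ref{thm:ziegler-saito} applied to $\{\theta_\kappa,\theta_2,\dots,\theta_{\ell-1}\}$ (whose coefficient determinant is $\prod\alpha_H^{\kappa(H)-1}\det M(\theta_E,\dots,\theta_{\ell-1})\ \dot=\ Q(\CA'',\kappa)$) yields the desired basis, and likewise for every $\one\le\mu\le\kappa$. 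Without some such argument your proof of the first assertion --- and hence also of the localized exponent computation on which your second paragraph relies --- is incomplete.
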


   \begin{proof}
   	Owing to \cite[Thm.~1.13]{abe:projectivedimension}, 
	the restriction map $\rho \colon D(\CA) \rightarrow D(\CA'')$ from \eqref{eq:rho}
	is surjective, since $\CA'$ is free.
	Let $\alpha_0 \in V^*$ with $H_0 = \ker \alpha_0$ and let 
	\[
	D_{H_0}(\CA) := \{\theta \in D(\CA) \mid \theta(\alpha_0) = 0\}
	\]
	be the annihilator of $H_0$ in $D(\CA)$ which is a
	graded $S$-submodule of $D(\CA)$. 
	Then, since 
	\[
	D(\CA) = S \theta_E \oplus D_{H_0}(\CA),
	\]
	as $S$-modules (cf.~\cite[Prop.~4.27]{orlikterao:arrangements}), 
	$\CA''$ is free, and $\rho$ is surjective, 
	we may pick an $\overline{S}$-basis of homogeneous derivations $\theta_E,\theta_2,\ldots,\theta_{\ell-1}$ of $D(\CA'')$ such 
	that $\theta_2,\ldots,\theta_{\ell-1} \in \rho(D_{H_0}(\CA))$. 
	We observe that the restriction of $\rho$ to $D_{H_0}(\CA)$ affords a restriction map 
	\[
	\rho|_{D_{H_0}(\CA)} : D_{H_0}(\CA) \to  D(\CA'',\kappa),
	\]
	\cite{ziegler:multiarrangements}; see also \cite[Thm.~1.34(i)]{yoshinaga:free14}.
	
	Hence $\theta_2,\ldots,\theta_{\ell-1} \in D(\CA'',\kappa)$.	
	    Let $H = \ker \alpha_H$ for $H \in \CA''$ and recall from \eqref{eq:thetamu} that $\theta_\kappa:= \prod_{H \in \CA''} \alpha_H^{\kappa(H)-1} \cdot \theta_E \in D(\CA'',\kappa)$.

	Since the chosen basis elements $\theta_E,\theta_2,\ldots,\theta_{\ell-1}$ of $D(\CA'')$ are independent over $\overline{S}$,	it follows that also 
	\begin{equation*}
	\label{eq:basis1}
	\theta_\kappa, \theta_2,\ldots,\theta_{\ell-1} \in D(\CA'',\kappa)
	\end{equation*}
	are independent over $\overline{S}$	and thus 
	\[
	\det M(\theta_\kappa,\ldots,\ldots \theta_{\ell-1}) 
	= \prod_{H\in \CA''} \alpha_H^{\kappa(H)-1} \det M (\theta_E,\ldots,\theta_{\ell-1}) \ \dot=   \prod_{H\in \CA''} \alpha_H^{\kappa(H)}
	\]
	is a non-zero scalar multiple of $Q(\CA'',\kappa) $.    
	Hence $(\CA'',\kappa)$ is free, thanks to Theorem \ref{thm:ziegler-saito}.
		
	Likewise and more generally, $(\CA'',\mu)$ is free for every multiplicity
	$\one \le \mu \le \kappa$; explicitly, 
	\begin{equation}
	\label{eq:basis}
	\left\{ \theta_\mu , \theta_2, \ldots, \theta_{\ell-1}\right\}
	\end{equation}
	is an $\overline{S}$-basis of $D(\CA'',\mu)$, by the same argument, as above.
	
	The fact that the corresponding Euler multiplicities $\mu_i^*$ in a free filtration of $(\CA'',\kappa)$
	are Ziegler multiplicities on $\CA''$ follows from a concentrated multiplicity on $\CA''$; see the proof of Corollary \ref{cor:multifreetuberestrictions} in 
	\cite[Cor.~3.8]{hogeroehrle:Ziegler}.
	
	Finally, the statement about the exponents of $(\CA'',\mu)$ follows readily from the proof above and \eqref{eq:basis}. 
\end{proof}

\begin{remark}
	\label{rem:euler-being-kappa}
	As already pointed out in Remark \ref{rem:free_sequence_to_ziegler_multiplicity}(i), in general the Euler multiplicity in connection with the restriction of a multiarrangement is not combinatorial, cf.~\cite[Ex.~4.2]{abeteraowakefield:euler}. In contrast, a canonical multiplicity in a restriction of a simple arrangement is combinatorial, by definition. 
	Again, as in the case of Lemma \ref{lemma:free_sequence_to_ziegler_multiplicity}(2), 
	likewise in the situations of Lemma \ref{lemma:free_sequence_to_simple_multiplicity}
	and Corollary \ref{cor:multifreetuberestrictions},
	the Euler multiplicities $\mu_i^*$ are always combinatorial. 
\end{remark}

\section{Proofs of Theorems \ref{thm:main} --- \ref{thm:main6}}
\label{Sec:proofs}

\subsection{}

Armed with the results from the previous sections, we address the main theorems from the introduction starting with the first three of these.
	
	\begin{proof}[Proof of Theorem \ref{thm:main}]
		We argue by induction on $\lvert \CA \rvert$. 
		For $\lvert \CA \rvert \le 3$ the assertion is true since all restrictions are of rank at most $2$ and all multiplicities on such are inductively free, \cite{ziegler:multiarrangements}. 
		Now suppose $\lvert \CA \rvert > 3$. Since $\CA$ is inductively free, there is an $H \in \CA$ such 
		that both $\CA' := \CA \setminus \{H\}$ and $\CA'' := \CA^{H}$ are inductively free. We consider two cases.
		
		Suppose $H = H_0$. It then follows from Lemma \ref{lemma:free_sequence_to_simple_multiplicity} that there is a sequence of  
		free multiplicities from the inductively free (multi-)arrangement $\CA'' = (\CA'',\one)$ to $(\CA'',\kappa)$
		and each occurring Euler multiplicity in the sequence is a Ziegler multiplicity of a restriction of $\CA''$. Now each of the latter is inductively free by induction, since $\lvert \CA'' \rvert < \lvert \CA \rvert$ and $\CA''$ is inductively free. Finally, $(\CA'',\kappa)$ is inductively free, thanks to Lemma \ref{lem:indfreechain}.
		
		Suppose that $H \ne  H_0$. Now we use Lemma \ref{lemma:free_sequence_to_ziegler_multiplicity} and consider 
		the triple $(\CA^{H_0},\kappa)$, $\left((\CA')^{H_0},\kappa'\right)$,  and $\left((\CA'')^{H \cap H_0}, \kappa^*\right)$. Then the Euler multiplicity  $\kappa^*$ is just the 
		Ziegler multiplicity, say $\kappa''$, of the restriction of $\CA''$ to $H \cap H_0$. Therefore, as 
		$\lvert \CA'' \rvert < \lvert \CA \rvert$ and $\lvert \CA' \rvert < \lvert \CA \rvert$ and both 
		$\CA''$ and $\CA'$ are inductively free, it follows by induction that both $\left((\CA')^{H_0},\kappa'\right)$ and $\left((\CA'')^{H \cap H_0}, \kappa^*\right) = \left((\CA'')^{H \cap H_0}, \kappa''\right)$ are inductively free. It follows from 
		Theorem \ref{thm:restriction} 
		that the exponents of $(\CA^{H_0},\kappa)$ and $\left((\CA')^{H_0},\kappa'\right)$
		differ in precisely one entry by $1$.  
		So that, by Theorem \ref{thm:add-del}, 
		$\left((\CA'')^{H \cap H_0}, \kappa^*\right)$ is free and the exponents satisfy 
		$\exp\left((\CA'')^{H \cap H_0}, \kappa^*\right) \subseteq \exp \left((\CA')^{H_0},\kappa'\right)$.
		It thus follows from Definition \ref{def:indfree} that also $(\CA^{H_0},\kappa)$ is inductively free.
	\end{proof}

\begin{proof}[Proof of Theorem \ref{thm:main5}]
	Let $\CA = (\CA,V)$.		We argue by induction on $\dim V$. 
	If $\dim V \le 3$, then the assertion holds, since all restrictions are of rank at most $2$ and all multiplicities on such are inductively free and so in particular, are recursively free. 
	
	Now suppose $\dim V > 3$ and that the statement holds for recursively free arrangements in smaller dimensions. 
	
	Since $\CA$ is recursively free, it admits a recursive chain, as in Remark \ref{rem:recchain}.
	We argue further  by induction on the length of such a recursive chain. Obviously, if the length of such a chain is at most three, then $\CA$ is inductively free, and then so is $(\CA'',\kappa)$, by Theorem \ref{thm:main}, and so $(\CA'',\kappa)$ is recursively free.
	
	Now fix a recursive  chain for $\CA$ and assume that its length is at least four and that the result holds for  recursively free arrangements admitting a shorter recursive chain than the fixed one for $\CA$.  We consider two cases.
	
	Suppose $H_0$ is the last hyperplane in the fixed chain for $\CA$. It follows from Lemma \ref{lemma:free_sequence_to_simple_multiplicity} that
	$(\CA'',\mu)$ is free for every multiplicity $\mu$ satisfying $\one \le \mu \le \kappa$ and furthermore, every corresponding Euler multiplicity $\mu^*$ is a Ziegler multiplicity
	of a restriction of $\CA''$. Since $\CA'' = (\CA'', \one)$ is recursively free and $\dim H_0 < \dim V$, it follows by induction on dimension  that these Ziegler restrictions are recursively free.
	Fix a chain of free multiplicities $\one \le \mu_i \le \kappa$, as in Definition \ref{def:freefiltration}. Arguing by induction on $i$, we have that   
	$(\CA'', \mu_i')$ is recursively free. It follows from Theorems \ref{thm:restriction} and \ref{thm:restriction2} that the exponents of the corresponding restriction form a subset of $\exp(\CA'', \mu_i')$. Thus 
	$(\CA'', \mu_i)$ is recursively free, by Definition \ref{def:recfree}.
	By induction on $i$, we get that $(\CA'',\kappa)$ is recursively free, as desired.
	(Note that the argument shows that indeed $(\CA'',\mu)$ is recursively free for every multiplicity $\mu$ satisfying $\one \le \mu \le \kappa$.)

	Let $H \not= H_0$ be the last hyperplane in our recursive chain for $\CA$. 
	Let $\kappa_1$ be the Ziegler multiplicity of $\CB := \CA \setminus \{H\}$ restricted to $H_0$. 
	By Lemma \ref{lemma:free_sequence_to_ziegler_multiplicity}, both $(\CA'',\kappa)$ and $(\CB'',\kappa_1)$ are free and the multiplicities $\kappa$ and $\kappa_1$ 
	differ only in one hyperplane, namely $\kappa(H \cap H_0) = \kappa_1(H \cap H_0) +1 $. By induction on the length of the recursive chain, one of $(\CA'',\kappa)$ or $(\CB'',\kappa_1)$
	is recursively free (depending on whether $H$ has been deleted or added in the last step in the chain, respectively). Hence we only have to check that the corresponding Euler restriction 
	$\left((\CA'')^{H \cap H_0},\kappa^*\right)$ is recursively free. This in turn holds by Lemma \ref{lemma:free_sequence_to_ziegler_multiplicity}, 
	since this is the Ziegler restriction, say $\kappa''$ on $(\CA^{H})^{H \cap H_0}$. Since 
	 $\dim H < \dim V$ and $\CA^H$ 
	is recursively free, $\left((\CA'')^{H \cap H_0},\kappa^*\right) = \left((\CA^{H})^{H \cap H_0},\kappa''\right)$ is recursively free, by induction on dimension. 	
\end{proof}

We obtain Theorem \ref{thm:main2} as a consequence of Theorem \ref{thm:main} (resp.~Theorem \ref{thm:main5})
and Lemmas \ref{lem:indfreechain} and  \ref{lemma:free_sequence_to_simple_multiplicity}.	

\begin{proof}[Proof of Theorem \ref{thm:main2}]
	Let $\CA'' = \CA^{H_0}$.     By Lemma \ref{lemma:free_sequence_to_simple_multiplicity}, $(\CA'',\mu)$ is free for each $\one \le \mu \le \kappa$. Therefore, 
	$(\CA'',\kappa)$ is additively free whenever $\CA''$ is so.
	
	For the other two properties we have to check that the corresponding
	Euler restriction corresponding to the multiplicity $\mu^*$ has also the same property.
	First suppose $\CA''$ is inductively free. It follows from Lemma \ref{lemma:free_sequence_to_simple_multiplicity} that there is a sequence of  
	free multiplicities from the inductively free (multi-)arrangement $\CA'' = (\CA'',\one)$ to $(\CA'',\kappa)$
	and each occurring Euler multiplicity in the sequence is a Ziegler multiplicity of a restriction of $\CA''$.  It thus follows from Theorem \ref{thm:main}, applied to $\CA''$, 
	that each of the latter is inductively free.
	Finally, $(\CA'',\kappa)$ is inductively free, thanks to Lemma \ref{lem:indfreechain}.
	
	The argument in the previous paragraph also applies in case when $\CA''$ is recursively free where in place of Lemma \ref{lem:indfreechain} we require an analogue for recursive freeness and Theorem \ref{thm:main5} is used in place of  Theorem \ref{thm:main}.
\end{proof}		

\subsection{}
Theorem \ref{thm:main6} is a consequence of the following example of an additively free arrangement which admits a  Ziegler restriction that is not additively free. 

	\begin{example}
		\label{ex:additive}
		Consider the rank $5$ arrangement $\CD$ over the reals whose defining polynomial is the product of the $21$ linear forms given 		by 
		    \begin{align*}
		Q(\CD) = \ &x_2(x_1+x_3-x_5)(2x_1+x_2+x_3)(2x_1+x_2+2x_3+x_4-x_5)\\
		&x_5(x_1+x_3)(x_2+x_5)(2x_1+x_2+2x_3+x_4)(2x_1+x_3-x_5)\\
		&(2x_1+2x_2+2x_3+x_4)(x_2+x_3+x_4)(x_1+x_2+x_3+x_4)\\
		&(x_3+x_4)(x_1+x_2+x_3)x_1(x_1+x_3+x_4)(2x_1+x_2+x_3-x_5)\\
		&(x_2+x_3+x_4+x_5)(x_1-x_5)(x_1-x_4-x_5)x_4.
		\end{align*}	
		
		This arrangement occurs naturally as a subarrangement of a particular rank $5$ restriction of the Weyl arrangement of type $E_7$, see \cite[\S 3.3]{hogeroehrle:stairfree}.  
		It has a remarkable rare property: it is additively free but not inductively free.
		A free filtration of $\CD$ 
		is given 		by the order of the hyperplanes as they appear above in $Q(\CD)$
		ending in $\ker (x_4)$, cf.~\cite[Table 4]{hogeroehrle:stairfree}.
		We aim to show that the Ziegler restriction $(\CD^{\ker(x_4)},\kappa)$ is not additively free, thus proving Theorem~\ref{thm:main6}.
		
		Abbreviate $\CB:= \CD^{\ker(x_4)}$. 
By \cite[\S 3.3]{hogeroehrle:stairfree}, $\CD$ is additively free with $\exp(\CD) = \{1,5,5,5,5\}$, $\CB$ is free with $\exp(\CB) = \{1,5,5,5\}$, but $\CB$ is not additively  free.
One easily checks from $Q(\CD)$ above that 
		$\CB$ consists of $16$ hyperplanes and that $(\CB,\kappa)$ is given by
		\begin{align*}
		Q(\CB,\kappa) = & \ x_1x_2x_3x_5(x_1+x_3)^2(x_1-x_5)^2(x_2+x_3)(x_2+x_5)(x_1+x_2+x_3)^3\cdot  \\& \ (x_1+x_3-x_5)(2x_1+x_3-x_5)(2x_1+x_2+x_3)(2x_1+x_2+2x_3)\cdot  \\& \ (x_2+x_3+x_5)(2x_1+x_2+x_3-x_5)(2x_1+x_2+2x_3-x_5). 		
		\end{align*}
		
	Since $\CD$ is free with with $\exp(\CD) = \{1,5,5,5,5\}$, 
$(\CB,\kappa)$ is free with $\exp(\CB,\kappa) = \{5,5,5,5\}$, by Theorem \ref{thm:zieglermulti}. 
Recall  the derivation 
$\theta_\kappa = \left(\prod_{H \in \CB} \alpha_H^{\kappa(H)-1}\right) \theta_E$ in $D(\CB,\kappa)$ from \eqref{eq:thetamu}.
Since $\min \exp(\CB,\kappa) = 5 = 1+\vert \kappa \vert - \vert \CB \vert = \pdeg(\theta_\kappa)$, $\theta_\kappa$ 
is a derivation in $D(\CB,\kappa)$ of minimal degree, and so it  
can be chosen as an element of an $S$-basis of $D(\CB,\kappa)$. Thus, thanks to Corollary \ref{cor:thetamu},  
$D(\CB,\mu)$ is free for every $\mu$ with $\one \le \mu \le \kappa$ and $\exp(\CB,\mu) = \{1+\vert \mu \vert - \vert \CB \vert,5,5,5\}$.

Now fix such a multiplicity $\mu$ on $\CB$ 
 and some $H_0 = \ker \alpha_0 \in \CB$ with $\mu(H_0) = 1$. 
We claim that the deletion $(\CB',\mu')$ is not free (i.e.~where $H_0$ is removed from $\CB$  in passing from $(\CB,\mu)$ to $(\CB',\mu')$). 
By way of contradiction, suppose $(\CB',\mu')$ is free.

Since $1+\vert \mu \vert - \vert \CB \vert \le 1+\vert \kappa \vert - \vert \CB \vert = 5$, we have $\min \exp(\CB,\mu) = 1+\vert \mu \vert - \vert \CB \vert = \vert \mu \vert - \vert \CB' \vert$.
It follows from Lemma \ref{free_deletion_of_multiarrangement}(i) that 
$\CB'$ is also free with exponents $\{1,4,5,5\}$. This however is false. For, if $\CB'$  were free, so would be $\CB''$. But we have already observed  in \cite[\S 3.3]{hogeroehrle:Ziegler} that there is no restriction $\CB''$ of $\CB$ at all which is free with exponents $\{1,5,5\}$.
		As a result, $(\CB',\mu')$ is not free after all, as claimed. Therefore, $\one$ is the smallest free multiplicity on 
		$\CB$ that we can reach by a sequence of free deletions from $(\CB,\kappa)$. And since $\CB$ itself is not additively free, thanks to \cite[\S 3.3]{hogeroehrle:Ziegler}, neither is $(\CB,\kappa)$. 	
	\end{example}

Next, we present an instance of the failure of Theorem \ref{thm:main2}, where 
the ambient arrangement is still free and only the deletion is not.

\begin{example}
	\label{ex:failure1.5}
	Consider  the triple $(\CB, \CB', \CB'')$ for the 
	arrangement $\CB$  from Example \ref{ex:additive} with respect to $H_0 = \ker(x_1+x_2+x_3)$. One checks that $\CB'$ is not free while  $\CB''$ is inductively free with exponents $\{1,3,3\}$.	
	Since $\CB$ is free with exponents $\{1,5,5,5\}$, by Example \ref{ex:additive}, $(\CB'',\kappa)$ 	is free with exponents $\{5,5,5\}$, by Theorem \ref{thm:zieglermulti}. However, the latter is not inductively free, because for any hyperplane in $\CB''$, 
	the order of the Euler multiplicity $\vert \kappa^\ast \vert$ is bounded above by $8$, and so the condition on the exponents in 
	Definition \ref{def:indfree}(ii) is not satisfied for any member of $\CB''$, as then $\vert \kappa^\ast \vert$ would have to be $10 = 5 +5$, cf.~Theorem \ref{thm:add-del}.
\end{example}

\section{Complements}

In this section we present some complements to our main developments.

\subsection{A converse to Theorem \ref{thm:main}}
\label{Yconverse}
There is an important and  celebrated converse to Ziegler's Theorem \ref{thm:zieglermulti}, due to Yoshinaga; see \cite{yoshinaga:free04} and \cite[Thm.~1.38]{yoshinaga:free14}. It is rather natural to ponder whether there might be  
a converse to Theorem \ref{thm:main} analogous of Yoshinaga's theorem:
Suppose that $\CA$ is an $\ell$-arrangement for $\ell \ge 4$, $(\CA^{H_0},\kappa)$ is inductively free 
for some $H_0 \in \CA$ 
and $\CA$ is locally inductively free along $H_0$, i.e.~$\CA_X$ is inductively free for each $X \in L(\CA)$ with $\{0\} \ne X \subset H_0$. Then one might hope that $\CA$ itself is inductively free. 

The rank $4$ reflection arrangement $\CA = \CA(G_{31})$ of the complex reflection group of type $G_{31}$ is an example which dashes this hope. Owing to
\cite[Thm.~1.4]{hogeroehrle:Ziegler}, $(\CA^{H_0},\kappa)$ is inductively free for any $H_0 \in \CA$. 
Moreover, inspecting \cite[Table C.12]{orlikterao:arrangements} and observing \cite[Cor.~6.28(2)]{orlikterao:arrangements}, it follows from \cite[Thm.~1.2]{hogeroehrle:super} that all proper localizations $\CA_X$ are supersolvable. In particular, 
$\CA$ is locally inductively free along any $H_0$.
Nevertheless, $\CA(G_{31})$ itself is not inductively free, according to 
\cite[Thm.~1.1]{hogeroehrle:inductivelyfree}.

\subsection{Divisional freeness}

It follows from Theorem \ref{thm:main6} that additive freeness of $\CA$ does not imply inductive freeness of  $(\CA'',\kappa)$ in general. 
In this section we investigate whether the weaker freeness conditions 
of divisional freeness 
on $\CA$ still entails inductive freeness of $(\CA'',\kappa)$.
Divisional freeness is a canonical notion weaker than inductive freeness due to Abe, \cite{abe:divfree}. It is defined as follows, where $\chi(\CA, t)$ denotes the characteristic polynomial of the arrangement $\CA$.

\begin{defn}
	[{\cite[Def.~1.5]{abe:divfree}}]
	\label{def:divfree}
	An $\ell$-arrangement $\CA$ is 
	\emph{divisionally free} if either $\ell \le 2$, $\CA = \Phi_\ell$, or
	else there is a flag of subspaces $X_i$ of rank $i$ in $L(\CA)$,
	\[
	X_0 = V \supset X_1 \supset X_2 \supset \cdots \supset X_{\ell -2},
	\]
	so that $\chi(\CA^{X_i},t)$ divides $\chi(\CA^{X_{i-1}},t)$, for 
	$i = 1, \ldots, \ell-2$.
	Denote this class by $\CDF$.
\end{defn}

Owing to \cite[Thm.~1.1]{abe:divfree}, each member of $\CDF$ is free.
Indeed, in \cite[Thms.~1.3 and ~1.6]{abe:divfree}, Abe observed that $\CIF \subsetneq \CDF$. For, the reflection arrangement of the complex reflection group of type $G_{31}$ is 
divisionally free but not inductively free. The latter is  not even additively free, see the proof of \cite[Lem.\ 3.5]{hogeroehrle:inductivelyfree}.

It turns out that divisonal freeness of $\CA$ does not imply  inductive freeness of $(\CA'',\kappa)$, as our next example demonstrates.

\begin{example}
	\label{ex:divfree}
		Recall the intermediate arrangements $\CA^k_\ell(r)$ from Example \ref{ex:akl}.
			Owing to \cite[Thm.~5.6]{abe:divfree}, $\CA^k_\ell(r)$ is divisionally free for $k \ge 1$.
	It follows from 
	\cite[Prop.~4.2]{hogeroehrleschauenburg:free} that 
	for $\CA = \CA^k_\ell(r)$ with $r \ge 3$, $\ell \ge 5$, and $1 \le k \le  \ell -3$, the Ziegler restriction 
	$(\CA^{H_0},\kappa)$ is not inductively free for $H_0 = \ker (x_1 - x_2)$ in $\CA$.
\end{example}

\subsection{Concentrated multiplicities}

In our final section 
we study 
multiplicities which are
concentrated at a single hyperplane. 
These were introduced by
Abe, Terao and Wakefield,
{\cite[\S 5]{abeteraowakefield:euler}.
	While in general, a multiarrangement  $(\CA, \mu)$ need not be free 
	for a free hyperplane arrangement $\CA$ and 
	an arbitrary multiplicity $\mu$, 
	e.g.~see \cite[Ex.~14]{ziegler:multiarrangements},
	for these concentrated multiplicities freeness is 
	also induced from the simple arrangement.
	It turns out that they are closely related to Ziegler's
	canonical multiplicity, 
	see Proposition \ref{prop:delta}. 
	
	\begin{defn}
		\label{def:delta}
		Let $\CA$ be a simple arrangement.
		Fix $H_0 \in \CA$ and $m_0 \in \BBZ_{\ge 1}$ and 
		define the 
		\emph{multiplicity $\delta$ concentrated at $H_0$}
		by
		\[
		\delta(H) := \delta_{H_0,m_0}(H) := 
		\begin{cases}
		m_0 & \text{ if } H = H_0,\\
		1   & \text{ else}.
		\end{cases}
		\]
	\end{defn}

	It turns out that both
	$\CA$ and $(\CA, \delta)$ 
	inherit freeness from one another:
	
	\begin{theorem}[{\cite[Thm.~1.7]{hogeroehrle:Ziegler}}]
		\label{thm:delta}
		Let $\CA$ be an arrangement.
		Fix $H_0 \in \CA$, $m_0 \in \BBZ_{\ge 1}$ and let 
		$\delta = \delta_{H_0,m_0}$ be 
		the multiplicity concentrated at $H_0$, 
		as in Definition \ref{def:delta}.
		Then 
		$\CA$ is free 
		with exponents
		$\exp \CA = \{1, e_2, \ldots, e_\ell\}$
		if and only if
		$(\CA, \delta)$ is free with exponents
		$\exp (\CA, \delta) = \{m_0, e_2, \ldots, e_\ell\}$. 
	\end{theorem}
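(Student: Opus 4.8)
The plan is to avoid any induction on $m_0$ and instead reduce the freeness of $(\CA,\delta)$ directly to that of $\CA$ by means of an explicit $S$-module decomposition. Write $H_0=\ker\alpha_0$. The starting observation is that the derivation $\theta_\delta=\alpha_0^{m_0-1}\theta_E$ from \eqref{eq:thetamu} (with $\mu=\delta$) satisfies $\theta_\delta(\alpha_0)=\alpha_0^{m_0}$ by Euler's identity, has $\pdeg\theta_\delta=m_0$, and lies in $D(\CA,\delta)$. Set
\[
D_{H_0}(\CA):=\{\theta\in\Der(S)\mid \theta(\alpha_0)=0,\ \theta(\alpha_H)\in\alpha_H S\ \text{for all}\ H\in\CA\setminus\{H_0\}\},
\]
a graded $S$-submodule of both $D(\CA)$ and $D(\CA,\delta)$ which is, crucially, visibly independent of $m_0$: for $H\neq H_0$ we have $\delta(H)=1$, and the condition at $H_0$ itself is vacuous on $D_{H_0}(\CA)$.

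The first main step is to prove the graded direct sum decomposition
\[
D(\CA,\delta)\;=\;S\,\theta_\delta\;\oplus\;D_{H_0}(\CA).
\]
For this I would take $\theta\in D(\CA,\delta)$, write $\theta(\alpha_0)=\alpha_0^{m_0}g$ with $g\in S$ homogeneous, and check that $\theta-g\theta_\delta$ still lies in the $S$-module $D(\CA,\delta)$ and kills $\alpha_0$, hence lies in $D_{H_0}(\CA)$; directness is immediate since $(g\theta_\delta)(\alpha_0)=g\alpha_0^{m_0}$ vanishes only for $g=0$. Specialising to $m_0=1$ recovers the classical splitting $D(\CA)=S\theta_E\oplus D_{H_0}(\CA)$ of \cite[Prop.~4.27]{orlikterao:arrangements}.

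The second step is purely formal. Since $S\theta_\delta$ and $S\theta_E$ are each free of rank one, and a finitely generated graded module over the polynomial ring $S$ which is a direct summand of a free module is itself free, the two decompositions yield the equivalences $(\CA,\delta)$ free $\iff$ $D_{H_0}(\CA)$ free $\iff$ $\CA$ free. To match exponents, I would pick a homogeneous $S$-basis $\eta_1,\dots,\eta_{\ell-1}$ of $D_{H_0}(\CA)$ (of rank $\ell-1$, since $D(\CA,\delta)$ has rank $\ell$): then $\{\theta_E,\eta_1,\dots,\eta_{\ell-1}\}$ and $\{\theta_\delta,\eta_1,\dots,\eta_{\ell-1}\}$ are homogeneous bases of $D(\CA)$ and $D(\CA,\delta)$, and as $\pdeg\theta_E=1$ and $\pdeg\theta_\delta=m_0$, the identity $\exp\CA=\{1,e_2,\dots,e_\ell\}$ forces $\{e_2,\dots,e_\ell\}=\{\pdeg\eta_1,\dots,\pdeg\eta_{\ell-1}\}$ and hence $\exp(\CA,\delta)=\{m_0,e_2,\dots,e_\ell\}$.

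There is no genuine hard step here: the whole argument is driven by the single observation that the summand $D_{H_0}(\CA)$ is one and the same module for every concentrated multiplicity $\delta_{H_0,m_0}$, only the degree of the rank-one complement $S\theta_\delta$ changing. The two points demanding mild care are the standard fact that graded projective $S$-modules are free (Quillen--Suslin, or the elementary graded version) and the degree bookkeeping. A heavier alternative would be an induction on $m_0$ through the Addition--Deletion Theorem \ref{thm:add-del} applied to the triple of $(\CA,\delta_{H_0,m_0})$ at $H_0$ — whose deletion is $(\CA,\delta_{H_0,m_0-1})$ and whose restriction is $(\CA'',\delta^*)$ — after first verifying, via the rank-two computation on each localisation $\CA_Y$, that for $m_0\geq 2$ the Euler multiplicity $\delta^*$ coincides with Ziegler's $\kappa$, so that Theorem \ref{thm:zieglermulti} controls the restriction term; but the direct decomposition above is cleaner and is the route I would take.
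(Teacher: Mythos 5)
Your argument is correct. Note first that the paper does not actually prove Theorem \ref{thm:delta}; it is quoted from \cite[Thm.~1.7]{hogeroehrle:Ziegler}, so there is no in-text proof to match step by step. That said, your route is exactly the circle of ideas this paper deploys elsewhere: the splitting $D(\CA,\delta)=S\theta_\delta\oplus D_{H_0}(\CA)$ with $\theta_\delta=\alpha_0^{m_0-1}\theta_E$ is the concentrated-multiplicity instance of the derivation $\theta_\mu$ from \eqref{eq:thetamu} combined with the decomposition $D(\CA)=S\theta_E\oplus D_{H_0}(\CA)$ of \cite[Prop.~4.27]{orlikterao:arrangements}, which is precisely the mechanism in the proof of Lemma \ref{lemma:free_sequence_to_simple_multiplicity}. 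Each step checks out: $\theta-g\theta_\delta$ with $\theta(\alpha_0)=\alpha_0^{m_0}g$ does land in $D_{H_0}(\CA)$, directness is clear, a finitely generated graded direct summand of a graded free $S$-module is free by graded Nakayama (no need for Quillen--Suslin), and the exponent bookkeeping via multiset cancellation of the single entry $1$ (resp.\ $m_0$) is valid. Your approach has a genuine advantage over the ``heavier alternative'' you mention (the addition--deletion induction on $m_0$ underlying \cite[Prop.~5.2]{abeteraowakefield:euler} and Proposition \ref{prop:delta}): since the complement $D_{H_0}(\CA)$ is literally the same module for every $m_0$, both implications of the equivalence fall out symmetrically, whereas the inductive route gives the forward direction cleanly but makes the converse awkward, as the deletion step of Theorem \ref{thm:add-del} would require advance knowledge of the freeness and exponents of the restriction.
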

		
	The following combines
	\cite[Prop.~5.2]{abeteraowakefield:euler}, parts of its proof
	and Theorem \ref{thm:zieglermulti}. 
	
	\begin{proposition}[{\cite[Prop.~2.14]{hogeroehrle:Ziegler}}]
		\label{prop:delta}
		Let $\CA$ be a free arrangement with exponents 
		$\exp \CA = \{1, e_2, \ldots, e_\ell\}$.
		Fix $H_0 \in \CA$, $m_0 \in \BBZ_{\ge 1}$ and let 
		$\delta = \delta_{H_0,m_0}$ be 
		as in Definition \ref{def:delta}.
		Let  $(\CA'', \delta^*)$ be the restriction with respect to $H_0$. 
		Then we have
		\begin{itemize}
			\item[(i)] 
			$(\CA, \delta)$ is free with exponents
			$\exp (\CA, \delta) = \{m_0, e_2, \ldots, e_\ell\}$;
			\item[(ii)] 
			$(\CA'', \delta^*) = (\CA'', \kappa)$ is free with exponents
			$\exp  (\CA'', \kappa) = \{e_2, \ldots, e_\ell\}$.
		\end{itemize}
	\end{proposition}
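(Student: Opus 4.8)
The plan is to split the statement and handle its two parts separately. Part~(i) is essentially a restatement of Theorem~\ref{thm:delta}: since $\CA$ is free with $\exp \CA = \{1, e_2, \ldots, e_\ell\}$, that theorem immediately yields that $(\CA, \delta)$ is free with $\exp(\CA, \delta) = \{m_0, e_2, \ldots, e_\ell\}$. For part~(ii) the key observation is that, granted the identity $\delta^* = \kappa$ of multiplicities on $\CA''$, the freeness of $(\CA'', \kappa)$ together with $\exp(\CA'', \kappa) = \{e_2, \ldots, e_\ell\}$ is precisely Ziegler's Theorem~\ref{thm:zieglermulti} applied to the free arrangement $\CA$. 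So all of part~(ii) comes down to showing $\delta^*(Y) = \kappa(Y)$ for every $Y \in \CA''$.

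To prove this I would fix $Y \in \CA''$ and use that, by Definition~\ref{def:Euler}, the value $\delta^*(Y)$ is computed entirely inside the localization $(\CA_Y, \delta_Y)$; since the localization of $(\CA_Y, \delta_Y)$ at $Y$ is $(\CA_Y, \delta_Y)$ itself, the Euler multiplicity of $(\CA_Y, \delta_Y)$ at $Y$ equals $\delta^*(Y)$. Now $Y$ has codimension $2$, so $\CA_Y$ has rank $2$ and is therefore free with exponents $1$ and $\kappa(Y)$, and $\delta_Y$ is exactly the multiplicity $\delta_{H_0, m_0}$ concentrated at $H_0$ on $\CA_Y$. Assuming $m_0 \ge 2$ (the case $m_0 = 1$ being the classical simple restriction), Theorem~\ref{thm:delta} applied inside $\CA_Y$ shows that $(\CA_Y, \delta_Y)$ is free with exponents $m_0$ and $\kappa(Y)$, while its deletion at $H_0$ is $(\CA_Y, \delta_{H_0, m_0-1})$, again free with exponents $m_0-1$ and $\kappa(Y)$. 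These two exponent multisets differ by $1$ in exactly the slot carrying $m_0$, so the Addition--Deletion Theorem~\ref{thm:add-del} forces the restriction $\bigl((\CA_Y)^{H_0}, (\delta_Y)^*\bigr)$ to be free with single nonzero exponent $\kappa(Y)$. Since $(\CA_Y)^{H_0}$ consists of the single flat $Y$, this says $(\delta_Y)^*(Y) = \kappa(Y)$, hence $\delta^*(Y) = \kappa(Y)$; running this over all $Y \in \CA''$ gives $\delta^* = \kappa$, and Theorem~\ref{thm:zieglermulti} then yields part~(ii).

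The step that needs the most care is this last bookkeeping: one must be sure that it is the $m_0$-slot of $\exp(\CA_Y, \delta_Y)$, and not the $\kappa(Y)$-slot, that drops under deletion. This is exactly where the hypothesis $m_0 \ge 2$ enters, for it guarantees that $H_0$ survives in the deleted multiarrangement, so that $\exp(\CA_Y, \delta_{H_0, m_0-1}) = \{m_0-1, \kappa(Y)\}$; when $m_0 = 1$ the deletion removes $H_0$ outright and one lands on exponents $\{1, \kappa(Y)-1\}$, which gives $\delta^* = \one$, the familiar behaviour of the simple restriction. A smaller point worth making explicit is the locality of the Euler multiplicity used above, namely that the Euler multiplicity of the rank-$2$ multiarrangement $(\CA_Y, \delta_Y)$ at $Y$ agrees with $\delta^*(Y)$; this is immediate from Definition~\ref{def:Euler}, but it is the bridge between the rank-$2$ computation and the global statement. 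Finally, one could instead run Addition--Deletion directly on the global triple $(\CA, \delta)$, $(\CA, \delta_{H_0, m_0-1})$, $(\CA'', \delta^*)$ and read off freeness of $(\CA'', \delta^*)$ with $\exp(\CA'', \delta^*) = \{e_2, \ldots, e_\ell\}$ in one stroke; but this shortcut does not by itself identify $\delta^*$ with $\kappa$ as multiplicities, so the local computation above would still be needed for the full conclusion.
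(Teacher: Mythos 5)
Your argument is correct, and it is essentially the intended one: the paper gives no proof of Proposition~\ref{prop:delta} at all, but merely cites \cite[Prop.~2.14]{hogeroehrle:Ziegler} with the remark that it combines \cite[Prop.~5.2]{abeteraowakefield:euler}, parts of its proof, and Theorem~\ref{thm:zieglermulti}. Your reconstruction --- part~(i) as the forward implication of Theorem~\ref{thm:delta}; part~(ii) by localizing at each $Y\in\CA''$, applying Theorem~\ref{thm:delta} to the rank-two multiarrangement $(\CA_Y,\delta_Y)$ and its deletion, and reading off $(\delta_Y)^*(Y)=\kappa(Y)$ from Theorem~\ref{thm:add-del}; then Ziegler's Theorem~\ref{thm:zieglermulti} for the freeness and exponents --- is exactly that route. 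The two points you single out are indeed the only delicate ones, and you handle both: the Euler multiplicity is local by Definition~\ref{def:Euler}, and even in the degenerate cases $m_0\in\{\kappa(Y),\kappa(Y)+1\}$ every admissible matching of the exponent multisets in Theorem~\ref{thm:add-del} yields the same restricted exponent $\kappa(Y)$, so the bookkeeping is unambiguous.

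One thing you leave as an aside should be stated as a caveat to the proposition itself. Your computation shows that for $m_0=1$ one has $\delta=\one$ and hence $\delta^*=\one$, which differs from $\kappa$ as soon as some $Y\in\CA''$ satisfies $|\CA_Y|>2$. Thus the identification $(\CA'',\delta^*)=(\CA'',\kappa)$ in part~(ii) genuinely requires $m_0\ge 2$; for $m_0=1$ only the remaining assertion (freeness of $(\CA'',\kappa)$ with exponents $\{e_2,\ldots,e_\ell\}$, i.e.\ Theorem~\ref{thm:zieglermulti}) survives. This is a defect of the statement as printed, which allows $m_0\in\BBZ_{\ge 1}$, not of your argument, but a complete write-up should say so explicitly rather than folding it into a parenthesis.
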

	
	Here is  
	the analogue of Theorem \ref{thm:main} in this setting which answers \cite[Ques.~1.8]{hogeroehrle:Ziegler}.
	
	\begin{corollary}
		\label{cor:main-delta}
		If $\CA$  is inductively free, then so is  
		$(\CA, \delta)$.
	\end{corollary}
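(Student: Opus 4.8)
The plan is to derive Corollary \ref{cor:main-delta} from Theorem \ref{thm:main} together with Theorem \ref{thm:delta} and Proposition \ref{prop:delta}, by exhibiting an explicit inductive chain for $(\CA,\delta)$ that mirrors one for $\CA$. First I would recall the setup: $\delta = \delta_{H_0,m_0}$ for some fixed $H_0 \in \CA$ and $m_0 \in \BBZ_{\ge 1}$. If $m_0 = 1$ then $(\CA,\delta) = (\CA,\one) = \CA$ is simple and inductively free by hypothesis, so I may assume $m_0 \ge 2$. By Proposition \ref{prop:delta}(ii), the Ziegler restriction $(\CA^{H_0},\kappa)$ coincides with the restriction $(\CA'',\delta^*)$ appearing in the triple of $(\CA,\delta)$ with respect to $H_0$, and by Theorem \ref{thm:main} this multiarrangement is inductively free.

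The core of the argument is then an induction on $m_0$. Consider the triple $(\CA,\delta_{H_0,m_0})$, $(\CA,\delta_{H_0,m_0-1})$, $(\CA'',\delta^*)$ with respect to $H_0$. By Definition \ref{def:Euler}, since $\delta_{H_0,m_0}(H_0) = m_0 > 1$, the deletion is $(\CA, \delta_{H_0,m_0-1})$, and the Euler multiplicity $\delta^*$ on $\CA''$ does not change as $m_0$ varies (it is determined by the rank-$2$ localizations, whose behaviour off $H_0$ is unaffected); in fact Proposition \ref{prop:delta}(ii) identifies it with $\kappa$ for every $m_0 \ge 1$. Now invoke Theorem \ref{thm:delta} and Proposition \ref{prop:delta}(i): each $(\CA,\delta_{H_0,j})$ is free with $\exp(\CA,\delta_{H_0,j}) = \{j, e_2,\ldots,e_\ell\}$ for $j = 1,\ldots,m_0$, where $\exp\CA = \{1,e_2,\ldots,e_\ell\}$, and $\exp(\CA'',\kappa) = \{e_2,\ldots,e_\ell\}$. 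Thus the exponents of $(\CA,\delta_{H_0,j})$ and $(\CA,\delta_{H_0,j-1})$ differ exactly in the single entry $j$ versus $j-1$, and $\exp(\CA'',\kappa) = \{e_2,\ldots,e_\ell\} \subseteq \{j-1,e_2,\ldots,e_\ell\} = \exp(\CA,\delta_{H_0,j-1})$, so the Addition-Deletion hypotheses of Theorem \ref{thm:add-del} and the containment condition of Definition \ref{def:indfree}(ii) are both met at every step.

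Running the induction: the base case is $(\CA,\delta_{H_0,1}) = \CA$, which is inductively free by hypothesis, and $(\CA'',\kappa)$ is inductively free by Theorem \ref{thm:main}. For the inductive step, assuming $(\CA,\delta_{H_0,j-1}) \in \CIFM$, the triple with respect to $H_0$ has deletion $(\CA,\delta_{H_0,j-1}) \in \CIFM$ and restriction $(\CA'',\kappa) \in \CIFM$ with $\exp(\CA'',\kappa) \subseteq \exp(\CA,\delta_{H_0,j-1})$, whence $(\CA,\delta_{H_0,j}) \in \CIFM$ by Definition \ref{def:indfree}(ii). After $m_0 - 1$ steps we reach $(\CA,\delta) = (\CA,\delta_{H_0,m_0}) \in \CIFM$, as desired.

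I expect the only subtlety — not really an obstacle — to be making fully precise that the Euler multiplicity in the triple for $(\CA,\delta_{H_0,j})$ with respect to $H_0$ is genuinely the same $\kappa$ for all $j$, so that the single inductively free multiarrangement $(\CA'',\kappa)$ from Theorem \ref{thm:main} services every addition step; this is exactly what Proposition \ref{prop:delta}(ii) provides (it asserts $(\CA'',\delta^*) = (\CA'',\kappa)$ independently of $m_0$), so no separate computation is needed. Everything else is a direct bookkeeping of exponents via Theorem \ref{thm:delta} and an unwinding of Definition \ref{def:indfree}.
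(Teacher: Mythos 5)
Your proposal is correct and follows essentially the same route as the paper: invoke Theorem \ref{thm:main} to get $(\CA'',\kappa)\in\CIFM$, then induct on $m_0$ using Proposition \ref{prop:delta}(ii) to identify each Euler restriction with $(\CA'',\kappa)$ and the addition part of Theorem \ref{thm:add-del} (equivalently Definition \ref{def:indfree}(ii)) at each step. The paper states this more tersely, but your exponent bookkeeping is exactly the verification it leaves implicit.
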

	
	\begin{proof}
		Since $\CA$ is inductively free, so is $(\CA'', \kappa)$, by Theorem \ref{thm:main}. 
		The result now follows from induction on $m_0 \ge 1$,
		Proposition \ref{prop:delta}(ii) and a repeated 
		application of the addition part of Theorem \ref{thm:add-del}.
	\end{proof}

\bigskip {\bf Acknowledgments}:
This work was supported by DFG-Grant
RO 1072/21-1 (DFG Project number 494889912) to G.~R\"ohrle.


\bigskip

\bibliographystyle{amsalpha}

\begin{thebibliography}{ABC{\etalchar{+}}16}

\bibitem[Abe16]{abe:divfree}
T.~Abe,
\emph{Divisionally free arrangements of hyperplanes},
Invent. Math. \textbf{204}(1), (2016), 317--346.
	
	
\bibitem[Abe19]{abe:sf}
	\bysame, 
\emph{Addition-deletion theorem for free hyperplane arrangements and combinatorics},  
Math. Ann. \textbf{373} (2019), no. 1-2,
581--595.

\bibitem[Abe24]{abe:projectivedimension}
\bysame, 
\emph{Projective dimensions of hyperplane arrangements},
Trans. Amer. Math. Soc. \textbf{377} (2024), no. 11, 7793--7827.
	
	\bibitem[ABC{\etalchar{+}}16]{p-ABCHT-14}
	T.~Abe, M.~Barakat, M.~Cuntz, T.~Hoge, and H.~Terao, \emph{The freeness of
		ideal subarrangements of {W}eyl arrangements}, J. Eur. Math. Soc. (JEMS)
	\textbf{18} (2016), no.~6, 1339--1348.
	
	\bibitem[ATW08]{abeteraowakefield:euler}
	T.~Abe, H.~Terao, and M.~ Wakefield, 
	\emph{The Euler multiplicity and addition-deletion theorems for multiarrangements}. 
	J. Lond. Math. Soc. (2)  \textbf{77} (2008), no. 2, 335--348.
	
	\bibitem[AHR14]{amendhogeroehrle:indfree}
	N.~Amend, T.~Hoge and G.~R\"ohrle, 
	\emph{On inductively free restrictions of reflection arrangements},
	J. Algebra \textbf{418} (2014), 197--212.	
	
	\bibitem[AMR18]{amendmoellerroehrle:aspherical} 
	N.~Amend, T.~M\"oller and G.~R\"ohrle, 
	\emph{Restrictions of aspherical arrangements},
	Topology Appl. \textbf{ 249}  (2018), 67--72.
		
	\bibitem[Ath98]{Atha98}
	C.~A. Athanasiadis, \emph{On free deformations of the braid arrangement}, European J.
	Combin. \textbf{19} (1998), 7--18.
	
	\bibitem[CR16]{conradroehrle:indfree}
	H.~Conrad and G.~R\"ohrle,
	\emph{On inductively free multiderivations of braid arrangements},
	Ann.~Comb.,  \textbf{20}, (2016),  719--735
	
	\bibitem[BC12]{cuntz:indfree} 
	M. Barakat and M. Cuntz, \emph{Coxeter and 
		crystallographic arrangements are inductively free}, Adv. Math. \textbf{229}
	(2012), 691--709.
	
	\bibitem[CH15]{cuntzhoge} M.~Cuntz and  T.~Hoge,
	\emph{Free but not Recursively Free Arrangements},
	Proc. Amer. Math. Soc. \textbf{143}, (2015), 35--40.
	
	\bibitem[CRS19]{cuntzroehrleschauenburg:ideal}
	M. Cuntz, G.~R\"ohrle, and A.~Schauenburg,
	\emph{Arrangements of ideal type are inductively free},
	Internat. J. Algebra Comput.  \textbf{29} (2019), no. 5, 761--773. 
	
		
	\bibitem[DW18]{dipasqualewakefield}
	M.~DiPasquale, M.~Wakefield,
	\emph{	Free multiplicities on the moduli of $X_3$}.
	J. Pure Appl. Algebra \textbf{222} (2018), no.~11, 3345--3359.
		
	\bibitem[ER96]{ER96_FreeArrRhombicTilings}
		P.~H. Edelman and V. Reiner, \emph{Free arrangements and rhombic tilings}, 
	Discrete Comput. Geom. \textbf{15} (1996), 307--340
	
	
	\bibitem[HR14]{hogeroehrle:super} 
	T.~Hoge and G.~R\"ohrle, 
	\emph{On supersolvable reflection arrangements}, 
	Proc. Amer. Math. Soc., \textbf{142} (2014), no. 11, 3787--3799.
	
	\bibitem[HR15]{hogeroehrle:inductivelyfree} 
	\bysame, 
	\emph{On inductively free reflection arrangements}, J.~Reine Angew.~Math.
	\textbf{701} (2015), 205--220. 	
	
	\bibitem[HR18]{hogeroehrle:Ziegler} 
	\bysame, 
	\emph{Inductive freeness of Ziegler's canonical multiderivations for reflection arrangements}. J. Algebra \textbf{512} (2018), 357--381. 
	
	\bibitem[HR21]{hogeroehrle:stairfree} 
	\bysame, 
	\emph{Some remarks on free arrangements}, T\^ohoku Math.~J.~\textbf{73} (2021), no.~2, 277--288.
	
	\bibitem[HRS17]{hogeroehrleschauenburg:free} 
	T.~Hoge, G.~R\"ohrle and A.~Schauenburg, 
	\emph{Inductive and Recursive Freeness of Localizations of 
		multiarrangements}, 
	in: Algorithmic and Experimental Methods in Algebra, Geometry, and Number Theory, Springer Verlag
	2017.

\bibitem[HRW25]{hogeroehrle:ZieglerIII} 
T.~Hoge, G.~R\"ohrle, and S.~Wiesner,
\emph{Inductive Freeness of Ziegler's Canonical Multiderivations for Restrictions of Reflection Arrangements},
Innovations in Incidence Geometry;
Algebraic, Topological and Combinatorial
(to appear).
		
	\bibitem[OT92]{orlikterao:arrangements} 
	P.~Orlik and H.~Terao,
	\emph{Arrangements of hyperplanes}, Springer-Verlag, 1992.
	
	\bibitem[R17]{gR17}
	G.~R\"ohrle, \emph{Arrangements of ideal type}, J. Algebra \textbf{484} (2017),
	126--167.
	
	\bibitem[ST06]{ST06}
	E.~Sommers and J.~Tymoczko, \emph{Exponents for {$B$}-stable ideals}, Trans.
	Amer. Math. Soc. \textbf{358} (2006), no.~8, 3493--3509.
		
	\bibitem[T80]{terao:freeI} 
	H.~Terao, \emph{Arrangements of hyperplanes and
		their freeness I, II}, J. Fac. Sci.  Univ. Tokyo \textbf{27} (1980),
	293--320.
	
	\bibitem[Y04]{yoshinaga:free04}
	M.~Yoshinaga,
	\emph{Characterization of a free arrangement and conjecture of 
	Edelman and Reiner}.
	Invent. Math. \textbf{157} (2004), no.~2, 449--454. 
	
	\bibitem[Y14]{yoshinaga:free14}
	\bysame, 
	\emph{Freeness of hyperplane arrangements and related topics}, 
	 Ann. Fac. Sci. Toulouse Math. (6) \textbf{23} (2014), no. 2, 483--512.
	
	\bibitem[Z89]{ziegler:multiarrangements}
	G.~Ziegler, 
	\emph{Multiarrangements of hyperplanes and their freeness}. 
	Singularities (Iowa City, IA, 1986), 345--359,
	Contemp. Math., \textbf{90}, Amer. Math. Soc., Providence, RI, 1989. 
	
\end{thebibliography}

\newcommand{\etalchar}[1]{$^{#1}$}
\providecommand{\bysame}{\leavevmode\hbox to3em{\hrulefill}\thinspace}
\providecommand{\MR}{\relax\ifhmode\unskip\space\fi MR }
\providecommand{\MRhref}[2]{%
	\href{http://www.ams.org/mathscinet-getitem?mr=#1}{#2} }
\providecommand{\href}[2]{#2}


\end{document}